\newcommand{\bz}{\mathbf 0}
\newcommand{\limto}{\rightarrow}
\newcommand{\del}{\partial}
\newcommand{\R}{\mathbb R}
\newcommand{\Or}{\mathbb O}
\newcommand{\Sy}{\mathbb S}
\newcommand{\argmin}{\mathop{\rm argmin}}
\newcommand{\cf}{{\it cf.}}
\newcommand{\eg}{{\it e.g.}}
\newcommand{\ie}{{\it i.e.}}
\newtheorem{lemma}{Lemma}
\newtheorem{thm}{Theorem}
\newtheorem{coro}{Corollary}
\newtheorem{defi}{Definition}
\newtheorem{prop}{Proposition}
\newtheorem{ass}{Assumption}
\newtheorem{fact}{Fact}
\newenvironment{proof}{{\bf Proof\,\,}}{\endproof\par}
\newcounter{spb}
\newcommand{\subpb}{(\alph{spb}) \addtocounter{spb}{1}}
\newcommand{\resetspb}{\setcounter{spb}{1}}
\def \openbox{$\sqcup\llap{$\sqcap$}$}
\def \endproof{\enskip \null \nobreak \hfill \openbox \par}
\title{A Unified Approach to Error Bounds for Structured Convex Optimization Problems}
\author{Zirui Zhou\thanks{Department of Systems Engineering and Engineering Management, The Chinese University of Hong Kong, Shatin, N.~T., Hong Kong.  E-mail: {\tt zrzhou@se.cuhk.edu.hk}} \and Anthony Man-Cho So\thanks{Department of Systems Engineering and Engineering Management, and, by courtesy, CUHK-BGI Innovation Institute of Trans-omics, The Chinese University of Hong Kong, Shatin, N.~T., Hong Kong.  E--mail: {\tt manchoso@se.cuhk.edu.hk}}}
\begin{document}
\maketitle

\begin{abstract}
Error bounds, which refer to inequalities that bound the distance of vectors in a test set to a given set by a residual function, have proven to be extremely useful in analyzing the convergence rates of a host of iterative methods for solving optimization problems.  In this paper, we present a new framework for establishing error bounds for a class of structured convex optimization problems, in which the objective function is the sum of a smooth convex function and a general closed proper convex function.  Such a class encapsulates not only fairly general constrained minimization problems but also various regularized loss minimization formulations in machine learning, signal processing, and statistics.  Using our framework, we show that a number of existing error bound results can be recovered in a unified and transparent manner. 
To further demonstrate the power of our framework, we apply it to a class of nuclear-norm regularized loss minimization problems and establish a new error bound for this class under a strict complementarity-type regularity condition. We then complement this result by constructing an example to show that the said error bound could fail to hold without the regularity condition.
Consequently, we obtain a rather complete answer to a question raised by Tseng.  We believe that our approach will find further applications in the study of error bounds for structured convex optimization problems.
%Furthermore, by assuming a strict complementarity-type regularity condition, we establish a new error bound for a class of nuclear norm-regularized loss minimization problems.  To complement this result, we construct an example to show that without the regularity condition, such an error bound could fail to hold. 
\end{abstract}

%\tableofcontents
%\newpage

\section{Introduction}
It has long been recognized that many convex optimization problems can be put into the form
\begin{equation}\label{eq:str-cvx-prob}
\min_{x\in\mathcal{E}} \left\{ F(x) := f(x) + P(x) \right\},
\end{equation}
where $\mathcal{E}$ is a finite-dimensional Euclidean space, $f:\mathcal{E}\limto(-\infty,+\infty]$ is a proper convex function that is continuously differentiable on $\mbox{int}(\mbox{dom}(f))$, and $P:\mathcal{E}\limto(-\infty,+\infty]$ is a closed proper convex function.  On one hand, the constrained minimization problem 
$$ \min_{x\in C} f(x), $$
where $C\subseteq\mathcal{E}$ is a closed convex set, is an instance of Problem~\eqref{eq:str-cvx-prob} with $P$ being the indicator function of $C$; \ie,
$$ P(x) = \left\{
\begin{array}{c@{\quad}l}
0 & \mbox{if } x \in C, \\
+\infty & \mbox{otherwise}.
\end{array}
\right.
$$
On the other hand, various data fitting problems in machine learning, signal processing, and statistics can be formulated as Problem~\eqref{eq:str-cvx-prob}, where $f$ is a loss function measuring the deviation of a solution from the observations and $P$ is a regularizer intended to induce certain structure in the solution.  With the advent of the big data era, instances of Problem~\eqref{eq:str-cvx-prob} that arise in contemporary applications often involve a large number of variables.  This has sparked a renewed interest in first-order methods for solving Problem~\eqref{eq:str-cvx-prob} in recent years; see, \eg,~\cite{tseng2010approximation,SNW12,W15} and the references therein. From a theoretical point of view, a fundamental issue concerning these methods is to determine their convergence rates. It is well known that various first-order methods for solving Problem~\eqref{eq:str-cvx-prob} will converge at the sublinear rate of $O(1/k^2)$, where $k\ge1$ is the number of iterations; see, \eg,~\cite{T08,beck2009fast,tseng2010approximation,N13}.  Moreover, the $O(1/k^2)$ convergence rate is optimal when the functions $f$ and $P$ are given by first-order oracles~\cite{nesterov2004introductory}.  However, in many applications, both $f$ and $P$ are given explicitly and have very specific structure.  It has been observed numerically that first-order methods for solving structured instances of Problem~\eqref{eq:str-cvx-prob} converge at a much faster rate than that suggested by the theory; see, \eg,~\cite{HYZ08,XZ13}. Thus, it is natural to ask whether the structure of the problem can be exploited in the convergence analysis to yield sharper convergence rate results.

As it turns out, a very powerful approach to addressing the above question is to study a so-called error bound property associated with Problem~\eqref{eq:str-cvx-prob}. Formally, let $\mathcal{X}\subseteq\mathcal{E}$ be the set of optimal solutions to Problem~\eqref{eq:str-cvx-prob}, assumed to be non-empty.  Furthermore, let $T\subseteq\mathcal{E}$ be a set satisfying $T\supseteq\mathcal{X}$ and $r:\mathcal{E}\limto\R_+$ be a function satisfying $r(x)=0$ if and only if $x\in\mathcal{X}$.  We say that Problem~\eqref{eq:str-cvx-prob} possesses a \emph{Lipschitzian error bound} (or simply \emph{error bound}) for $\mathcal{X}$ with \emph{test set} $T$ and \emph{residual function} $r$ if there exists a constant $\kappa>0$ such that 
\begin{equation} \label{eq:EB-def}
d(x,\mathcal{X})\le \kappa\cdot r(x) \quad\mbox{for all } x\in T,
\end{equation}
where $d(x,\mathcal{X}) := \inf_{z\in\mathcal{X}} \|z-x\|_2$ denotes the Euclidean distance from the vector $x\in\mathcal{E}$ to the set $\mathcal{X}\subseteq\mathcal{E}$; \cf~\cite{pang1997error}.  Conceptually, the error bound~\eqref{eq:EB-def} provides a handle on the structure of the objective function $F$ of Problem~\eqref{eq:str-cvx-prob} in the neighborhood $T$ of the optimal solution set $\mathcal{X}$ via the residual function $r$.  For the purpose of analyzing the convergence rates of first-order methods, one particularly useful choice of the residual function is $r_{\rm prox}(x):=\|R(x)\|_2$, where $R:\mathcal{E}\limto\mathcal{E}$ is the residual map defined by
\begin{equation}\label{eq:residual-map-def}
R(x):= \mbox{prox}_P(x-\nabla f(x)) - x
\end{equation}
and $\mbox{prox}_P:\mathcal{E}\limto\mathcal{E}$ is the proximal map associated with $P$; \ie,
\begin{equation}\label{eq:prox-oper}
 \mbox{prox}_P(x) := \argmin_{z\in\mathcal{E}} \left\{ \frac{1}{2}\| x - z \|_2^2 + P(z) \right\}. 
\end{equation}
Indeed, by comparing the optimality conditions of~\eqref{eq:str-cvx-prob} and~\eqref{eq:prox-oper}, it is immediate that $x\in\mathcal{X}$ if and only if $r_{\rm prox}(x)=0$.  Moreover, it is known that many first-order methods for solving Problem~\eqref{eq:str-cvx-prob} have update rules that aim at reducing the value of the residual function; see, \eg,~\cite{LT93,CW05,tseng2009coordinate}.  This leads to the following instantiation of~\eqref{eq:EB-def}:

%With the residual function $r_{\rm prox}$, we can now consider the following instantiation of~\eqref{eq:EB-def}:

\medskip
\noindent
{\bf Error Bound with Proximal Map-Based Residual Function.} For any $\zeta\ge v^*:=\min_{x\in\mathcal{E}} F(x)$, there exist constants $\kappa>0$ and $\epsilon>0$ such that
\begin{equation} \tag{EBP} \label{eq:err-bd}
d(x,\mathcal{X}) \leq \kappa \|R(x)\|_2 \quad\mbox{for all } x\in\mathcal{E} \mbox{ with } F(x)\leq \zeta, \, \|R(x)\|_2 \le \epsilon.
\end{equation}

\noindent The usefulness of the error bound~\eqref{eq:err-bd} comes from the fact that whenever it holds, a host of first-order methods for solving Problem~\eqref{eq:str-cvx-prob}, such as the proximal gradient method, the extragradient method, and the coordinate (gradient) descent method, can be shown to converge linearly; see~\cite{LT93,tseng2010approximation} and the references therein.  Thus, an important research issue is to identify conditions on the functions $f$ and $P$ under which the error bound~\eqref{eq:err-bd} holds.  Nevertheless, despite the efforts of many researchers over a long period of time, the repertoire of instances of Problem~\eqref{eq:str-cvx-prob} that are known to possess the error bound~\eqref{eq:err-bd} is still rather limited.  Below are some representative scenarios in which~\eqref{eq:err-bd} has been shown to hold:

%Such a property is closely tied to the structure of Problem~\eqref{eq:str-cvx-prob} and proves key to establishing much improved convergence rate results for a host of first-order methods.  
%
%The error bound defined above has rich and diverse applications in mathematical programming. When the residual function is easily computable, error bounds can provide stopping rules in the practical implementation of iterative methods. Furthermore, it has been widely recognized that if the residual function is related to the algorithmic mapping of the iterative methods for solving optimization problems, such error bound is the key to analyzing the convergence rates of these numerical methods, especially in treating problems with degenerate solutions~\cite{luo1993convergence, pang1997error}. 
%
\begin{itemize}
\item[(S1).] (\cite[Theorem 3.1]{pang1987posteriori}) $\mathcal{E}=\R^n$, $f$ is strongly convex, $\nabla f$ is Lipschitz continuous, and $P$ is arbitrary (but closed, proper, and convex).

\item[(S2).] (\cite[Theorem 2.1]{luo1992linear}) $\mathcal{E}=\mathbb{R}^n$, $f$ takes the form $f(x) = h(Ax) + \langle c,x\rangle$, where $A\in\mathbb{R}^{m\times n}$, $c\in\mathbb{R}^n$ are given and $h:\R^m\limto(-\infty,+\infty]$ is proper and convex with the properties that (i) $h$ is continuously differentiable on $\mbox{dom}(h)$, assumed to be non-empty and open, and (ii) $h$ is strongly convex and $\nabla h$ is Lipschitz continuous on any compact subset of $\mbox{dom}(h)$, and $P$ has a polyhedral epigraph.

%\item[(S3).] (\cite[Theorem 4.1]{luo1993convergence}) $\mathcal{E}=\R^n$, $f$ takes the form $f(x)=\max_{y\in Y} \left\{ \langle y,Ax \rangle - h(y) \right\} + \langle c,x \rangle$, where $A\in\R^{m\times n}$, $c\in\R^n$, and $h:\R^m\limto(-\infty,\infty]$ are as in scenario (S2), $Y\subseteq\R^m$ is polyhedral, and $P$ has a polyhedral epigraph.

\item[(S3).] (\cite[Theorem 2]{tseng2010approximation}; \cf~\cite[Theorem 1]{ZJL13}) $\mathcal{E}=\mathbb{R}^n$, $f$ takes the form $f(x)=h(Ax)$, where $A\in\R^{m\times n}$ and $h:\R^m\limto(-\infty,+\infty]$ are as in scenario (S2), and $P$ is the grouped LASSO regularizer; \ie, $P(x) = \sum_{J\in\mathcal{J}}\omega_J\|x_J\|_2$, where $\mathcal{J}$ is a partition of the index set $\{1,\ldots,n\}$, $x_J\in\R^{|J|}$ is the vector obtained by restricting $x\in\R^n$ to the entries in $J\in\mathcal{J}$, and $\omega_J\ge0$ is a given parameter.
\end{itemize}
In many applications, such as regression problems, the function $f$ of interest is not strongly convex but has the structure described in scenarios (S2) and (S3).  However, a number of widely used structure-inducing regularizers $P$---most notably the nuclear norm regularizer---are not covered by these scenarios. One of the major difficulties in establishing the error bound~\eqref{eq:err-bd} for regularizers other than those described in scenarios (S2) and (S3) is that they typically have non-polyhedral epigraphs.  Moreover, existing approaches to establishing the error bound~\eqref{eq:err-bd} are quite ad hoc in nature and cannot be easily generalized.  Thus, in order to identify more scenarios in which the error bound~\eqref{eq:err-bd} holds, some new ideas would seem to be necessary.

% In particular, they do not offer much insight into the conditions on Problem~\eqref{eq:str-cvx-prob} that are necessary and/or sufficient for the error bound to hold, thereby limiting their generalizability.

In this paper, we present a new analysis framework for studying the error bound property~\eqref{eq:err-bd} associated with Problem~\eqref{eq:str-cvx-prob}.  The framework applies to the setting where $f$ has the form described in scenario (S2) and $P$ is \emph{any} closed proper convex function.  In particular, it applies to all the scenarios (S1)--(S3).  Our first contribution is to elucidate the relationship between the error bound property~\eqref{eq:err-bd} and various notions in set-valued analysis.  This allows us to utilize powerful tools from set-valued analysis to elicit the key properties of Problem~\eqref{eq:str-cvx-prob} that can guarantee the validity of~\eqref{eq:err-bd}.  Specifically, we show that the problem of establishing the error bound~\eqref{eq:err-bd} can be reduced to that of checking the calmness of a certain set-valued mapping $\Gamma$ induced by the optimal solution set $\mathcal{X}$ of Problem~\eqref{eq:str-cvx-prob}; see Corollary~\ref{cor:orig-eb}.  Furthermore, using the fact that $\mathcal{X}$ can be expressed as the intersection of a polyhedron and the inverse of the subdifferential of $P$ at a certain point $-\bar{g}\in\mathcal{E}$ (see Proposition~\ref{prop:opt-invariant}), we show that the calmness of $\Gamma$ is in turn implied by (i) the bounded linear regularity of the two intersecting sets and (ii) the calmness of $(\del P)^{-1}$ at $-\bar{g}$; see Theorem~\ref{thm:two-conditions}.  These results provide a concrete starting point for verifying the error bound property~\eqref{eq:err-bd} and make it possible to simplify the analysis substantially.  We remark that when $P$ has a polyhedral epigraph, the early works~\cite{LT93,luo1993convergence} of Luo and Tseng have already pointed out a connection between~\eqref{eq:err-bd} and the calmness of certain polyhedral multi-function.  However, such an idea has not been further explored in the literature to tackle more general forms of $P$.

To demonstrate the power of our proposed framework, we apply it to scenarios (S1)--(S3) and show that the error bound results in~\cite{pang1987posteriori,luo1992linear,tseng2010approximation} can be recovered in a unified manner; see Sections~\ref{subsec:str-cvx}--\ref{subsec:groupLASSO}. It is worth noting that scenario (S3) involves the non-polyhedral grouped LASSO regularizer, and the existing proof of the validity of the error bound~\eqref{eq:err-bd} in this scenario employs a highly intricate argument~\cite{tseng2010approximation}.  By contrast, our approach leads to a much simpler and more transparent proof.  % of the result in~\cite{tseng2010approximation} concerning the validity of the error bound~\eqref{eq:err-bd} in this scenario.
Motivated by the above success, we proceed to apply our framework to the following scenario, which again involves a non-polyhedral regularizer and arises in the context of low-rank matrix optimization:
\begin{itemize}
\item[(S4).] $\mathcal{E}=\mathbb{R}^{m\times n}$, $f$ takes the form $f(X) = h(\mathcal{A}(X)) + \langle C,X \rangle$, where $\mathcal{A}:\mathbb{R}^{m\times n}\rightarrow\mathbb{R}^\ell$ is a given linear operator, $C\in\R^{m\times n}$ is a given matrix, $h:\mathbb{R}^\ell\limto(-\infty,+\infty]$ is as in scenario (S2), and $P$ is the nuclear norm regularizer; \ie, $P(X) = \|X\|_{*}$.  
\end{itemize}
The validity of the error bound~\eqref{eq:err-bd} in this scenario was left as an open question in~\cite{tseng2010approximation} and to date is still unresolved.\footnote{It was claimed in~\cite{HZSL13} that the error bound~\eqref{eq:err-bd} holds in scenario (S4).  However, there is a critical flaw in the proof. Specifically, contrary to what was claimed in~\cite[Supplementary Material, Section C]{HZSL13}, the matrices $U^k$ and $V^k$ that satisfy displayed equations~(37) and~(38) need not satisfy displayed equation~(35). The erroneous claim was due to an incorrect application of~\cite[Lemma 4.3]{sun2002strong}. We thank Professor Defeng Sun and Ms.~Ying Cui for bringing this issue to our attention.}  As our second contribution in this work, we show that under a strict complementarity-type regularity condition on the optimal solution set $\mathcal{X}$ of Problem~\eqref{eq:str-cvx-prob}, the error bound~\eqref{eq:err-bd} holds in scenario (S4); see Proposition~\ref{prop:nucnorm-eb}.  This is achieved by verifying conditions (i) and (ii) mentioned in the preceding paragraph.  Specifically, we first show that condition (i) is satisfied under the said regularity condition.  Then, we prove that $(\del\|\cdot\|_*)^{-1}$ is calm everywhere, which implies that condition (ii) is always satisfied; see Proposition~\ref{prop:nucnorm-metric-subreg}.  We note that to the best of our knowledge, this last result is new and could be of independent interest.  To further understand the role of the regularity condition, we demonstrate via a concrete example that without such condition, the error bound~\eqref{eq:err-bd} could fail to hold; see Section~\ref{subsubsec:nucnorm-eb}. Consequently, we obtain a rather complete answer to the question raised by Tseng~\cite{tseng2010approximation}.

%We show that in scenarios (S1)--(S3), both conditions (i) and (ii) mentioned in the preceding paragraph are satisfied; see Sections~\ref{subsec:str-cvx}--\ref{subsec:groupLASSO}.  Consequently, we recover the error bound results in~\cite{pang1987posteriori,luo1992linear,tseng2010approximation} in a unified manner.  It is also worth noting that our approach gives a much simpler and more intuitive proof of the result in~\cite{tseng2010approximation} concerning the validity of the error bound~\eqref{eq:err-bd} in scenario (S3).  

%From the above discussion, we see that our proposed framework not only provides a unified treatment of a number of existing error bound results for Problem~\eqref{eq:str-cvx-prob} but also has the potential to .  We expect that our framework will find further applications in the study of the error bound property~\eqref{eq:err-bd} associated with the structured convex optimization problem~\eqref{eq:str-cvx-prob}.
%unify existing and can potentially be generalized to other non-polyhedral

The following notations will be used throughout the paper.  Let $\mathcal{E},\mathcal{T}$ denote finite-dimensional Euclidean spaces.  The closed ball around $\bar{x}\in\mathcal{E}$ with radius $r>0$ in $\mathcal{E}$ is given by $\mathbb{B}_{\mathcal{E}}(\bar{x},r):=\left\{ x\in\mathcal{E} \mid \|x-\bar{x}\|_2 \le r\right\}$. For simplicity, we denote the closed unit ball $\mathbb{B}_{\mathcal{E}}(\bz,1)$ in $\mathcal{E}$ by $\mathbb{B}_{\mathcal{E}}$.  We use $\mathbb{S}^n$ and $\mathbb{O}^n$ to denote the sets of $n\times n$ real symmetric matrices and $n\times n$ orthogonal matrices, respectively.  Given a matrix $X\in\mathbb{S}^n$, we use $X\in\mathbb{S}_+^n$ or $X\succeq\bz$ (resp.~$X\in\Sy_{++}^n$ or $X\succ\bz$) to indicate that $X$ is positive semidefinite (resp.~positive definite).  Also, we use $\|X\|_F$ and $\|X\|$ to denote the Frobenius norm and spectral norm of the matrix $X\in\R^{m\times n}$, respectively.

\section{Preliminaries}
\subsection{Basic Setup} \label{subsec:setup}
Consider the optimization problem~\eqref{eq:str-cvx-prob}.  Recall that its optimal value and optimal solution set are denoted by $v^{*}$ and $\mathcal{X}$, respectively.  We shall make the following assumptions in our study:
\begin{ass}\label{ass:smooth-func} (Structural Properties of the Objective Function)
\begin{enumerate}
\item[\subpb] The function $f:\mathcal{E}\limto(-\infty,+\infty]$ takes the form
\begin{equation}\label{eq:form-of-f}
f(x) = h(\mathcal{A}(x)) + \langle c,x\rangle,
\end{equation}
where $\mathcal{A}:\mathcal{E}\rightarrow\mathcal{T}$ is a linear operator, $c \in \mathcal{E}$ is a given vector, and $h:\mathcal{T}\limto(-\infty,+\infty]$ is a convex function with the following properties:
\begin{enumerate}
\item[(i)] The effective domain ${\rm dom}(h)$ of $h$ is non-empty and open, and $h$ is continuously differentiable on ${\rm dom}(h)$.

\item[(ii)] For any compact convex set $V\subseteq{\rm dom}(h)$, the function $h$ is strongly convex and its gradient $\nabla h$ is Lipschitz continuous on $V$.
\end{enumerate}

\item[\subpb] The function $P:\mathcal{E}\limto(-\infty,+\infty]$ is convex, closed, and proper.
\end{enumerate}
\resetspb
\end{ass}
\begin{ass}\label{ass:level-bd} (Properties of the Optimal Solution Set) The optimal solution set $\mathcal{X}$ is non-empty and compact.  In particular, $v^*>-\infty$.
% For any $\zeta\geq v^{*}$, the level set $L(\zeta):=\{x\in\mathcal{E}\mid F(x)\leq \zeta\}$ is a compact subset of~$\mathcal{E}$.
\end{ass}

The above assumptions yield several useful consequences.  First, Assumption~\ref{ass:smooth-func}(a-i) implies that $\mbox{dom}(f)=\mathcal{A}^{-1}(\mbox{dom}(h)) = \left\{ x\in\mathcal{E} \mid \mathcal{A}(x) \in \mbox{dom}(h) \right\}$ is also non-empty and open, and $f$ is continuously differentiable on $\mbox{dom}(f)$.  Second, under Assumption~\ref{ass:smooth-func}(a-ii), if the Lipschitz constant of $\nabla h$ on the compact convex set $V \subseteq \mbox{dom}(h)$ is $L_h(V)$, then the Lipschitz constant of $\nabla f$ on $\mathcal{A}^{-1}(V)$ is at most $L_h(V)\|\mathcal{A}\|^2$, where $\|\mathcal{A}\|$ is the spectral norm of $\mathcal{A}$. Third, Assumption~\ref{ass:smooth-func} implies that $F$ is a closed proper convex function.  Together with Assumption~\ref{ass:level-bd} and~\cite[Corollary 8.7.1]{rockafellar1970convex}, we conclude that for any $\zeta \ge v^*$, the level set $L(\zeta):=\left\{ x\in\mathcal{E} \mid F(x) \le \zeta \right\}$ is a compact subset of $\mathcal{E}$.

Assumptions~\ref{ass:smooth-func} and~\ref{ass:level-bd} are automatically satisfied by a number of applications. As an illustration, consider the problem of regularized empirical risk minimization of linear predictors, which underlies much of the development in machine learning.  With $N$ being the number of data points and $\mathcal{T}=\R^N$, the problem takes the form
\begin{equation} \label{eq:reg-ERM}
\min_{x\in\mathcal{E}} \left\{ \frac{1}{N}\sum_{i=1}^N \ell \left( [\mathcal{A}(x)]_i,b_i \right) + P(x) \right\},
\end{equation}
where $[\mathcal{A}(x)]_i$ is the $i$-th component of the vector $\mathcal{A}(x)$ and represents the $i$-th linear prediction, $b_i$ is the $i$-th response, $\ell:\R\limto\R$ is a smooth convex loss function, and $P:\mathcal{E}\limto(-\infty,+\infty]$ is a regularizer used to induce certain structure in the solution.  It is clear that Problem~\eqref{eq:reg-ERM} is an instance of Problem~\eqref{eq:str-cvx-prob}.  Moreover, one can easily verify that when instantiated with the loss functions and regularizers in Table~\ref{tab:loss-reg}---which have been widely used in the machine learning literature---Problem~\eqref{eq:reg-ERM} satisfies both Assumptions~\ref{ass:smooth-func} and~\ref{ass:level-bd}.

\begin{table}[htb]
\centering
\subfloat[Loss Functions\label{subtab:loss}]{
\centering
\begin{tabular}{c|c|c}
  %\hline
  & $\ell(y,b)$ & Domain of $b$ \\ \hline\hline
  Linear Regression & $\begin{array}{c} \vspace{-10pt} \\ \displaystyle{ \frac{1}{2}(y-b)^2 } \\ \vspace{-10pt} \end{array}$ & $\R$ \\\hline
  Logistic Regression & $\log(1+\exp(-yb))$ & $\{-1,1\}$ \\ \hline
%  loss associated & \multirow{3}{*}{$yb-\log y$} & \multirow{3}{*}{$[0,\infty)$} \\ 
%  with exponential & & \\
%  noise~\cite{sardy2004automatic} & & \\\hline
  Poisson Regression & $-yb+\exp(y)$ & $\{0,1,\ldots\}$
\end{tabular}
}
\\
\subfloat[Regularizers\label{subtab:reg}]{
\centering
\begin{tabular}{c|c|c}
& $\mathcal{E}$ & $P(x)$ \\ \hline\hline
LASSO & $\R^n$ & $\|x\|_1$ \\ \hline
Ridge & $\R^n$ & $\begin{array}{c} \vspace{-12pt} \\ \|x\|_2^2 \\ \vspace{-12pt}\end{array}$ \\ \hline
Grouped LASSO & $\R^n$ & $\begin{array}{c} \vspace{-10pt} \\ \displaystyle{\sum_{J\in\mathcal{J}} \omega_J\|x_J\|_2}, \, \omega_J \ge 0, \\ \mathcal{J} \mbox{ a partition of }\{1,\ldots,n\} \\ \vspace{-12pt} \end{array}$ \\ \hline
Nuclear Norm & $\R^{m\times n}$ & $\|x\|_*$
\end{tabular}
}
\caption{Some commonly used loss functions and regularizers.}\label{tab:loss-reg}
\end{table}

\subsection{A Characterization of the Optimal Solution Set $\mathcal{X}$}
Since Problem~\eqref{eq:str-cvx-prob} is an unconstrained convex optimization problem, its first-order optimality condition is both necessary and sufficient for optimality.  Hence, we have 
\begin{equation} \label{eq:first-order-opt}
\mathcal{X} = \left\{ x\in\mathcal{E} \mid \bz \in \nabla f(x)+\del P(x) \right\}.
\end{equation}
The following proposition shows that under Assumptions~\ref{ass:smooth-func} and~\ref{ass:level-bd}, the optimal solution set $\mathcal{X}$ admits an alternative, more explicit characterization.  Such a characterization will be central to our analysis of the error bound property associated with Problem~\eqref{eq:str-cvx-prob}. 
%% --old--- As the smooth function $f$ of \eqref{eq:str-cvx-prob} is assumed to be of form \eqref{eq:form-of-f} and satisfy Assumption \ref{ass:smooth-func}, the optimal solution set $\mathcal{X}$ has the following invariant property.
\begin{prop}\label{prop:opt-invariant}
Consider the optimization problem~\eqref{eq:str-cvx-prob}.  Under Assumptions \ref{ass:smooth-func} and \ref{ass:level-bd}, there exists a $\bar{y}\in\mathcal{T}$ such that
\begin{equation} \label{eq:invar}
\mathcal{A}(x) = \bar{y}, \,\,\, \nabla f(x) = \bar{g} \quad\mbox{for all } x\in\mathcal{X},
\end{equation}
where $\bar{g} = \mathcal{A}^{*}\nabla h(\bar{y}) + c \in \mathcal{E}$.  In particular, we have
\begin{equation}\label{eq:optimal-set}
\mathcal{X} = \left\{ x\in\mathcal{E} \mid \mathcal{A}(x) = \bar{y},\, -\bar{g} \in \partial P(x) \right\}.
\end{equation}
\end{prop}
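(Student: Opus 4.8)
The plan is to establish the proposition in two stages: first show that the linear image $\mathcal{A}(x)$ is constant across the optimal solution set $\mathcal{X}$, and then deduce the gradient invariance $\eqref{eq:invar}$ and the explicit characterization $\eqref{eq:optimal-set}$ as routine consequences of the chain rule and the first-order optimality condition $\eqref{eq:first-order-opt}$. The entire argument hinges on the strong convexity of $h$ guaranteed by Assumption~\ref{ass:smooth-func}(a-ii), which is what forces $\mathcal{A}$ to collapse $\mathcal{X}$ onto a single point $\bar{y}$.

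For the first stage, I would begin by recording that $\mathcal{X}$, being the nonempty (Assumption~\ref{ass:level-bd}) set of minimizers of the convex function $F$, is convex, and that every $x\in\mathcal{X}$ lies in $\mbox{dom}(f)=\mathcal{A}^{-1}(\mbox{dom}(h))$, which is open, so that $\nabla f$ is well defined there. Now fix any $x_1,x_2\in\mathcal{X}$ and set $x_\lambda=\lambda x_1+(1-\lambda)x_2$ for $\lambda\in[0,1]$. Since $\mathcal{X}$ is convex we have $x_\lambda\in\mathcal{X}$, whence $F(x_\lambda)=v^*=\lambda F(x_1)+(1-\lambda)F(x_2)$. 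Writing $F=f+P$ and using the convexity of $f$ and of $P$ separately, the two Jensen gaps $\lambda f(x_1)+(1-\lambda)f(x_2)-f(x_\lambda)\ge0$ and $\lambda P(x_1)+(1-\lambda)P(x_2)-P(x_\lambda)\ge0$ sum to zero and hence both vanish for every $\lambda$; in particular $f$ is affine along the segment $[x_1,x_2]$. Subtracting the linear term $\langle c,\cdot\rangle$ and invoking the linearity of $\mathcal{A}$ (so that $\mathcal{A}(x_\lambda)=\lambda\mathcal{A}(x_1)+(1-\lambda)\mathcal{A}(x_2)$), this says precisely that $h$ is affine on the segment $[\mathcal{A}(x_1),\mathcal{A}(x_2)]$.

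To finish the first stage, I would observe that $[\mathcal{A}(x_1),\mathcal{A}(x_2)]$ is a compact convex subset of $\mbox{dom}(h)$, so Assumption~\ref{ass:smooth-func}(a-ii) supplies a modulus $\mu>0$ of strong convexity of $h$ on it. Evaluating the strong convexity inequality at the midpoint yields a strict decrease of order $\mu\|\mathcal{A}(x_1)-\mathcal{A}(x_2)\|_2^2$ below the chord value, which contradicts the affineness established above unless $\mathcal{A}(x_1)=\mathcal{A}(x_2)$. Thus $\mathcal{A}$ is constant on $\mathcal{X}$, and we name this common value $\bar{y}$. The gradient invariance follows at once from the chain rule: for every $x\in\mathcal{X}$, $\nabla f(x)=\mathcal{A}^*\nabla h(\mathcal{A}(x))+c=\mathcal{A}^*\nabla h(\bar{y})+c=\bar{g}$, which is $\eqref{eq:invar}$.

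For the characterization $\eqref{eq:optimal-set}$, I would verify both inclusions against $\eqref{eq:first-order-opt}$. If $x\in\mathcal{X}$, then $\mathcal{A}(x)=\bar{y}$ and $\bz\in\nabla f(x)+\del P(x)=\bar{g}+\del P(x)$, giving $-\bar{g}\in\del P(x)$. Conversely, if $\mathcal{A}(x)=\bar{y}$ and $-\bar{g}\in\del P(x)$, then $\bar{y}\in\mbox{dom}(h)$ forces $x\in\mbox{dom}(f)$, so $\nabla f(x)=\mathcal{A}^*\nabla h(\bar{y})+c=\bar{g}$ and hence $\bz=\bar{g}-\bar{g}\in\nabla f(x)+\del P(x)$, that is, $x\in\mathcal{X}$. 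I expect the main obstacle to lie in the first stage, specifically in cleanly transferring the affineness of $F$ through the decomposition $F=f+P$ and the linear operator $\mathcal{A}$ down to $h$, and in confirming that the relevant segment genuinely sits inside $\mbox{dom}(h)$ so that the strong convexity hypothesis is applicable; once $\mathcal{A}$ is shown to be constant on $\mathcal{X}$, the gradient invariance and the characterization are immediate.
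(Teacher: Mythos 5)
Your proof is correct and follows essentially the same route as the paper's: both force $\mathcal{A}(x_1)=\mathcal{A}(x_2)$ by pitting the strong convexity of $h$ on the compact segment $[\mathcal{A}(x_1),\mathcal{A}(x_2)]\subseteq\mbox{dom}(h)$ against the optimality of $x_1,x_2$, and then obtain the gradient invariance via the chain rule and the characterization \eqref{eq:optimal-set} via the two inclusions against the first-order condition, exactly as you do. The only cosmetic difference is that the paper skips your affineness-of-$h$ detour (and the appeal to convexity of $\mathcal{X}$) by directly evaluating $F$ at the midpoint, obtaining $F\left((x_1+x_2)/2\right) \le v^* - \tfrac{\sigma}{2}\|\mathcal{A}(x_1)-\mathcal{A}(x_2)\|_2^2$, which contradicts optimality unless the images coincide.
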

\begin{proof}
The proof of~\eqref{eq:invar} is rather standard; \cf~\cite{T91,luo1992linear}.  For completeness' sake, we include the proof here.  For arbitrary $x_1,x_2\in\mathcal{X}$, let $y_1 = \mathcal{A}(x_1)$ and $y_2 = \mathcal{A}(x_2)$.  Note that the line segment between $y_1$ and $y_2$ is a compact convex subset of $\mbox{dom}(h)$.  By Assumption \ref{ass:smooth-func}(a-ii), the function $h$ is strongly convex on this set.  Thus, there exists a $\sigma>0$ such that 
$$ h\left(\frac{y_1+y_2}{2}\right) \leq \frac{1}{2} h(y_1) + \frac{1}{2} h(y_2) - \frac{\sigma}{2}\|y_1 - y_2\|_2^2. $$
Due to \eqref{eq:form-of-f}, the above is equivalent to
$$ f\left(\frac{x_1+x_2}{2}\right) \leq \frac{1}{2} f(x_1) + \frac{1}{2} f(x_2) - \frac{\sigma}{2}\|y_1 - y_2\|_2^2. $$
Moreover, the convexity of $P$ gives
$$ P\left(\frac{x_1+x_2}{2}\right) \leq \frac{1}{2} P(x_1) + \frac{1}{2} P(x_2). $$
Upon adding the above two inequalities and using $x_1,x_2\in\mathcal{X}$, we have
$$ F\left(\frac{x_1+x_2}{2}\right) \leq v^{*} - \frac{\sigma}{2}\|y_1 - y_2\|_2^2. $$
This implies that $y_1=y_2$, for otherwise the above inequality contradicts the fact that $v^*$ is the optimal value of Problem~\eqref{eq:str-cvx-prob}.  Consequently, the map $x\mapsto\mathcal{A}(x)$ is invariant over $\mathcal{X}$; \ie, there exists a $\bar{y}\in\mathcal{T}$ such that $\mathcal{A}(x)=\bar{y}$ for all $x\in\mathcal{X}$.  Now, using~\eqref{eq:form-of-f} and Assumption~\ref{ass:smooth-func}(a-i), we compute $\nabla f(x) = \mathcal{A}^{*}\nabla h(\mathcal{A}(x))+c$.  Since $\mathcal{A}(x)=\bar{y}$ for all $x\in\mathcal{X}$, we have $\nabla f(x) = \mathcal{A}^{*}\nabla h(\bar{y})+c=\bar{g}$ for all $x\in\mathcal{X}$.  This completes the proof of~\eqref{eq:invar}.

To establish~\eqref{eq:optimal-set}, we first observe that by~\eqref{eq:first-order-opt} and~\eqref{eq:invar}, every $x\in\mathcal{X}$ belongs to the set on the right-hand side of~\eqref{eq:optimal-set}.  Now, for any $x\in\mathcal{E}$ satisfying $\mathcal{A}(x) = \bar{y}$ and $-\bar{g}\in\partial P(x)$, we can use the relationships $\bar{g} = \mathcal{A}^{*}\nabla h(\bar{y})+c$ and $\nabla f(x) = \mathcal{A}^{*}\nabla h(\mathcal{A}(x))+c$ to get $\mathbf{0}\in\nabla f(x) + \partial P(x)$.  This, together with~\eqref{eq:first-order-opt}, implies that $x\in\mathcal{X}$, as desired.
\end{proof}

\subsection{Tools from Set-Valued Analysis} \label{subsec:sva}
Proposition~\ref{prop:opt-invariant} reveals that the optimal solution set $\mathcal{X}$ of Problem~\eqref{eq:str-cvx-prob} is completely characterized by the vectors $\bar{y}\in\mathcal{T}$ and $\bar{g}\in\mathcal{E}$.  Thus, in order to estimate $d(x,\mathcal{X})$ for some $x\in\mathcal{E} \setminus \mathcal{X}$, a natural idea is to take $y=\mathcal{A}(x)$ and an arbitrary $-g\in\del P(x)$ and establish a relationship between $d(x,\mathcal{X})$ and $\|(y,g)-(\bar{y},\bar{g})\|_2$.  Intuitively, if $\mathcal{X}$ is ``nice'' (\eg, satisfies certain regularity condition), then one should be able to control the (local) growth of $d(x,\mathcal{X})$ by that of a ``nice'' function of $\|(y,g)-(\bar{y},\bar{g})\|_2$.  Such an idea can be formalized using tools from set-valued analysis, which we now introduce.

Let $\mathcal{E}_1$ and $\mathcal{E}_2$ be finite-dimensional Euclidean spaces.  We say that a mapping $\Gamma$ is a \emph{multi-function} (or \emph{set-valued mapping}) from $\mathcal{E}_1$ to $\mathcal{E}_2$ (denoted by $\Gamma: \mathcal{E}_1\rightrightarrows\mathcal{E}_2$) if it assigns a subset $\Gamma(u)$ of $\mathcal{E}_2$ to each vector $u\in\mathcal{E}_1$.  The \emph{graph} and \emph{domain} of $\Gamma$ are defined by
\begin{eqnarray*}
\mbox{gph}(\Gamma) &:=&\left\{ (u,v) \in \mathcal{E}_1\times\mathcal{E}_2 \mid v \in \Gamma(u) \right\}, \\
\mbox{dom}(\Gamma)&:=& \left\{ u\in\mathcal{E}_1 \mid \Gamma(u) \neq \emptyset \right\},
\end{eqnarray*}
respectively.  The inverse mapping of $\Gamma$, denoted by $\Gamma^{-1}$, is the multi-function from $\mathcal{E}_2$ to $\mathcal{E}_1$ defined by
$$%\begin{equation}\label{eq:def-inverse}
\Gamma^{-1}(v) := \left\{ u\in\mathcal{E}_1 \mid v \in \Gamma(u)\right\}. 
$$%\end{equation}
Before we proceed further, let us briefly illustrate some of the concepts above.

\medskip
\noindent{\bf Example}
\begin{enumerate} 
\item[\subpb] Let $A\in\R^{m\times n}$ be a given $m\times n$ matrix.  The mapping $\Gamma$ defined by $\Gamma(b)=\left\{ x\in\mathbb{R}^n \mid Ax = b\right\}$ is a multi-function from $\R^m$ to $\R^n$.  Here, $\Gamma(b)$ is simply the solution set of the linear system $Ax=b$. 

\item[\subpb] Let $P:\mathcal{E}\limto(-\infty,+\infty]$ be a closed proper convex function. Its subdifferential $\del P$ is a multi-function from $\mathcal{E}$ to $\mathcal{E}$.  Moreover, by~\cite[Corollary 23.5.1]{rockafellar1970convex}, we have $(\partial P)^{-1} = \partial P^{*}$, where $P^{*}$ is the conjugate of $P$. \\ \endproof
\end{enumerate}
\resetspb

%\medskip
Next, we introduce two regularity notions regarding set-valued mappings.
% that will be useful in our investigation.
\begin{defi}\label{defi:calmness-metric-sub}
(see, \eg,~\cite[Chapter 3H]{dontchev2009implicit}) 
\begin{enumerate}
\item[\subpb] A multi-function $\Gamma:\mathcal{E}_1\rightrightarrows\mathcal{E}_2$ is said to be \emph{calm} at $\bar{u}\in\mathcal{E}_1$ for $\bar{v}\in\mathcal{E}_2$ if $(\bar{u},\bar{v})\in{\rm gph}(\Gamma)$ and there exist constants $\kappa, \epsilon>0$ such that 
\begin{equation}\label{eq:calmness}
\Gamma(u) \cap \mathbb{B}_{\mathcal{E}_2}(\bar{v},\epsilon) \subseteq \Gamma(\bar{u}) + \kappa \|u - \bar{u}\|_2 \mathbb{B}_{\mathcal{E}_2} \quad \mbox{for all } u\in\mathcal{E}_1. % \mathbb{B}_{\mathcal{E}_1}(\bar{u},\delta).
\end{equation}

\item[\subpb] A multi-function $\Lambda:\mathcal{E}_1\rightrightarrows\mathcal{E}_2$ is said to be \emph{metrically sub-regular} at $\bar{u}\in\mathcal{E}_1$ for $\bar{v}\in\mathcal{E}_2$ if $(\bar{u},\bar{v})\in{\rm gph}(\Lambda)$ and there exist constants $\kappa, \epsilon>0$ such that
\begin{equation}\label{eq:metric-sub}
d\left( u, \Lambda^{-1}(\bar{v}) \right) \leq \kappa \cdot d\left( \bar{v}, \Lambda(u) \right) \quad \mbox{for all } u\in\mathbb{B}_{\mathcal{E}_1}(\bar{u},\epsilon). 
\end{equation}
\end{enumerate}
\resetspb
\end{defi}
The notions of calmness and metric sub-regularity have played a central role in the study of error bounds; see, \eg,~\cite{pang1997error,F02,S06,P10,K15} and the references therein.  To see what these notions would yield in the context of Problem~\eqref{eq:str-cvx-prob}, consider the multi-function $\Gamma:\mathcal{T}\times\mathcal{E}\rightrightarrows\mathcal{E}$ given by
\begin{equation} \label{eq:def-sol-map}
\Gamma(y,g) = \left\{ x\in\mathcal{E} \mid \mathcal{A}(x)=y, \, -g\in\del P(x) \right\}.
\end{equation}
Suppose that $\Gamma$ is calm at $(\bar{y},\bar{g})\in\mathcal{T}\times\mathcal{E}$ for $\bar{x}\in\mathcal{E}$, where $\bar{y}$ and $\bar{g}$ are given in Proposition~\ref{prop:opt-invariant}.  Note that $\mathcal{X}=\Gamma(\bar{y},\bar{g})$ and $\bar{x}\in\mathcal{X}$.  Hence, by~\eqref{eq:calmness}, there exist constants $\kappa,\epsilon>0$ such that
\begin{equation} \label{eq:calm-eb}
d(x,\mathcal{X}) \le \kappa\|(y,g)-(\bar{y},\bar{g})\|_2 \quad\mbox{for all } x\in\Gamma(y,g) \cap \mathbb{B}_{\mathcal{E}}(\bar{x},\epsilon) \mbox{ and } (y,g) \in \mathcal{E}_1. % (y,g) \in \mathbb{B}_{\mathcal{T}\times\mathcal{E}}((\bar{y},\bar{g}),\delta). 
\end{equation}
Since $x\in\Gamma(y,g) \cap \mathbb{B}_{\mathcal{E}}(\bar{x},\epsilon)$ is equivalent to $(y,g)\in\Gamma^{-1}(x)$ and $x\in\mathbb{B}_{\mathcal{E}}(\bar{x},\epsilon)$, it follows from~\eqref{eq:calm-eb} that
%Now, set $\eta=\min\{\epsilon,\kappa\delta\}>0$ and let $x\in\mathbb{B}_{\mathcal{E}}(\bar{x},\eta)$.  Suppose that $(y,g)\in\Gamma^{-1}(x)$.  If $(y,g)\in\mathbb{B}_{\mathcal{T}\times\mathcal{E}}((\bar{y},\bar{g}),\delta)$, then $d(x,\mathcal{X}) \le \kappa\|(y,g)-(\bar{y},\bar{g})\|_2$ by~\eqref{eq:calm-eb}.  Otherwise, we have $d(x,\mathcal{X}) \le \|x-\bar{x}\|_2 \le \eta \le \kappa\delta \le \kappa\|(y,g)-(\bar{y},\bar{g})\|_2$.  It follows that
\begin{equation} \label{eq:subreg-eb}
d(x,\mathcal{X}) \le \kappa \cdot d\left( (\bar{y},\bar{g}), \Gamma^{-1}(x) \right) \quad\mbox{for all } x\in\mathbb{B}_{\mathcal{E}}(\bar{x},\epsilon),
\end{equation}
which is an error bound for $\mathcal{X}$ with test set $\mathbb{B}_{\mathcal{E}}(\bar{x},\epsilon)$ and residual function $x\mapsto d\left( (\bar{y},\bar{g}), \Gamma^{-1}(x) \right)$.  Incidentally, the inequality~\eqref{eq:subreg-eb} also shows that the multi-function $\Gamma^{-1}:\mathcal{E}\rightrightarrows\mathcal{T}\times\mathcal{E}$ is metrically sub-regular at $\bar{x}\in\mathcal{E}$ for $(\bar{y},\bar{g})\in\mathcal{T}\times\mathcal{E}$.

The error bound~\eqref{eq:subreg-eb} shows that under a calmness assumption on the multi-function $\Gamma$ given in~\eqref{eq:def-sol-map}, the local growth of $d(x,\mathcal{X})$ is on the order of $\|(y,g)-(\bar{y},\bar{g})\|_2$, where $y=\mathcal{A}(x)$ and $-g\in\del P(x)$ is arbitrary.  This realizes the idea mentioned at the beginning of this sub-section.  However, we are ultimately interested in establishing the error bound~\eqref{eq:err-bd}, which is concerned with the test set $\left\{ x\in\mathcal{E} \mid F(x) \le \zeta, \, \|R(x)\|_2 \le \epsilon \right\}$ (where $\zeta\ge v^*$ is arbitrary and $\epsilon=\epsilon(\zeta)$ depends on $\zeta$) and residual function $x\mapsto\|R(x)\|_2$.  At first sight, it is not clear whether the error bounds~\eqref{eq:err-bd} and~\eqref{eq:subreg-eb} are compatible.  Indeed, the former involves only easily computable quantities (\ie, $F(x)$ and $\|R(x)\|_2$), while the latter involves quantities that are generally not known \emph{a priori} (\ie, $\bar{x}\in\mathcal{E}$, $\bar{y}\in\mathcal{T}$, and $\bar{g}\in\mathcal{E}$).  Nevertheless, as we shall demonstrate in Section~\ref{sec:charac-eb}, the latter can be used to establish the former under some mild conditions.

Before we leave this section, let us record two useful results regarding the notions of calmness and metric sub-regularity.  The first is a well-known equivalence between the calmness of a multi-function and the metric sub-regularity of its inverse.  One direction of the equivalence has already manifested in our discussion above.
\begin{fact}\label{fact:calm-metricsub} (see, \eg,~\cite[Theorem 3H.3]{dontchev2009implicit}) For a multi-function $\Gamma:\mathcal{E}_1\rightrightarrows\mathcal{E}_2$, let $(\bar{u},\bar{v})\in{\rm gph}(\Gamma)$. Then, $\Gamma$ is calm at $\bar{u}$ for $\bar{v}$ if and only if its inverse $\Gamma^{-1}$ is metrically sub-regular at $\bar{v}$ for $\bar{u}$.
\end{fact}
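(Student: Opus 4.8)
The plan is to prove the equivalence by reducing each of the two regularity notions to a single scalar inequality and observing that the two inequalities are literally the same statement. The only facts I need are the tautology $u\in\Gamma^{-1}(v)\ifaof v\in\Gamma(u)$ and the identity $(\Gamma^{-1})^{-1}=\Gamma$; the latter is what lets the metric sub-regularity condition for $\Gamma^{-1}$ be re-expressed purely in terms of $\Gamma$. First I would unpack the right-hand side of the equivalence. Instantiating Definition~\ref{defi:calmness-metric-sub}(b) with $\Lambda=\Gamma^{-1}:\mathcal{E}_2\rightrightarrows\mathcal{E}_1$, domain base point $\bar v$, and range target $\bar u$, and substituting $(\Gamma^{-1})^{-1}(\bar u)=\Gamma(\bar u)$, I find that metric sub-regularity of $\Gamma^{-1}$ at $\bar v$ for $\bar u$ is exactly the assertion that there exist $\kappa,\epsilon>0$ with
\[
d(v,\Gamma(\bar u))\le\kappa\cdot d(\bar u,\Gamma^{-1}(v))\quad\mbox{for all }v\in\mathbb{B}_{\mathcal{E}_2}(\bar v,\epsilon).\qquad(\ast)
\]
On the left-hand side, writing out the Minkowski-sum inclusion~\eqref{eq:calmness} point-wise, calmness of $\Gamma$ at $\bar u$ for $\bar v$ is exactly the assertion that there exist $\kappa,\epsilon>0$ with
\[
d(v,\Gamma(\bar u))\le\kappa\|u-\bar u\|_2\quad\mbox{whenever }v\in\Gamma(u)\cap\mathbb{B}_{\mathcal{E}_2}(\bar v,\epsilon).\qquad(\dagger)
\]

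To get $(\dagger)\imply(\ast)$ I would fix $v\in\mathbb{B}_{\mathcal{E}_2}(\bar v,\epsilon)$ and let $u$ range over $\Gamma^{-1}(v)$. Every such $u$ satisfies $v\in\Gamma(u)\cap\mathbb{B}_{\mathcal{E}_2}(\bar v,\epsilon)$, so $(\dagger)$ gives $d(v,\Gamma(\bar u))\le\kappa\|u-\bar u\|_2$; since the left-hand side does not depend on $u$, taking the infimum over $u\in\Gamma^{-1}(v)$ yields $(\ast)$, the case $\Gamma^{-1}(v)=\emptyset$ being vacuous under the convention $\inf\emptyset=+\infty$. For the converse $(\ast)\imply(\dagger)$ I would fix $u\in\mathcal{E}_1$ and $v\in\Gamma(u)\cap\mathbb{B}_{\mathcal{E}_2}(\bar v,\epsilon)$. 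Then $u\in\Gamma^{-1}(v)$, so $d(\bar u,\Gamma^{-1}(v))\le\|u-\bar u\|_2$, and plugging this into $(\ast)$ gives $d(v,\Gamma(\bar u))\le\kappa\|u-\bar u\|_2$, which is precisely $(\dagger)$. Note that the same pair of constants $(\kappa,\epsilon)$ is carried through unchanged in both directions, so no re-estimation is needed.

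I expect the only delicate point to be the passage between the distance inequality $d(v,\Gamma(\bar u))\le\kappa\|u-\bar u\|_2$ in $(\dagger)$ and the set-membership form $v\in\Gamma(\bar u)+\kappa\|u-\bar u\|_2\,\mathbb{B}_{\mathcal{E}_2}$ that appears literally in~\eqref{eq:calmness}. These are equivalent as soon as the image $\Gamma(\bar u)$ is closed, since then the infimum defining $d(v,\Gamma(\bar u))$ is attained in the finite-dimensional space $\mathcal{E}_2$; the subtlety is visible precisely in the boundary case $u=\bar u$, where the radius $\kappa\|u-\bar u\|_2$ collapses to $0$ and membership forces $v\in\Gamma(\bar u)$. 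In the applications that concern us this causes no difficulty, because there $\Gamma(\bar u)=\mathcal{X}$ is closed (indeed compact, by Assumption~\ref{ass:level-bd}); in general one simply adopts the standard convention, as in~\cite[Chapter 3H]{dontchev2009implicit}, under which this conversion is built into the two definitions. Once this point is fixed, the equivalence is nothing more than the definition-chase carried out above.
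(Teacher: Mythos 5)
Your proof is correct. Note first that the paper itself does not prove Fact~\ref{fact:calm-metricsub}: it is stated as a known result with a pointer to \cite[Theorem 3H.3]{dontchev2009implicit}, so there is no in-paper argument to compare against, and your self-contained definition-chase is essentially the standard proof of the cited theorem. The core of your argument is sound: unwinding metric sub-regularity of $\Gamma^{-1}$ via $(\Gamma^{-1})^{-1}=\Gamma$ to obtain $(\ast)$, unwinding the inclusion~\eqref{eq:calmness} point-wise to obtain $(\dagger)$, and passing between $(\ast)$ and $(\dagger)$ using the tautology $u\in\Gamma^{-1}(v)\Leftrightarrow v\in\Gamma(u)$ (an infimum over $u\in\Gamma^{-1}(v)$ in one direction, a specialization $d(\bar u,\Gamma^{-1}(v))\le\|u-\bar u\|_2$ in the other, with the empty-set case handled by $\inf\emptyset=+\infty$), and indeed the constants carry through unchanged between these two scalar statements.

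One quibble: you misidentify the ``delicate point.'' At $u=\bar u$ there is no subtlety at all: the right-hand side of~\eqref{eq:calmness} is then $\Gamma(\bar u)+\bz=\Gamma(\bar u)$, which trivially contains $\Gamma(\bar u)\cap\mathbb{B}_{\mathcal{E}_2}(\bar v,\epsilon)$, and $(\dagger)$ is likewise trivial there. The genuine gap between the distance form $(\dagger)$ and the membership form~\eqref{eq:calmness} occurs only for $u\neq\bar u$, when the infimum defining $d(v,\Gamma(\bar u))$ might fail to be attained. Even then no closedness assumption is needed: if $d(v,\Gamma(\bar u))\le\kappa\|u-\bar u\|_2$ with $u\neq\bar u$, then for every $\kappa'>\kappa$ one has $d(v,\Gamma(\bar u))<\kappa'\|u-\bar u\|_2$, hence some $w\in\Gamma(\bar u)$ satisfies $\|v-w\|_2<\kappa'\|u-\bar u\|_2$ and so $v\in\Gamma(\bar u)+\kappa'\|u-\bar u\|_2\,\mathbb{B}_{\mathcal{E}_2}$. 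Thus the two forms of calmness are equivalent as existence statements at the negligible cost of an arbitrarily small increase of the constant, and your appeal to closedness of $\Gamma(\bar u)$ (or to a convention) is not actually required; it merely lets you keep the constant exactly.
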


The second result concerns a multi-function $\Gamma:\mathcal{E}_1\rightrightarrows\mathcal{E}_2$ that is calm at $\bar{u}\in\mathcal{E}_1$ for a set of points $\bar{V} \subseteq \Gamma(\bar{u})$.  It shows that if $\bar{V}$ is compact, then the neighborhoods around each $\bar{v}\in\bar{V}$ in the definition of calmness can be made uniform.
\begin{prop}\label{prop:ulc-calm}
For a multi-function $\Gamma:\mathcal{E}_1\rightrightarrows\mathcal{E}_2$, let $\bar{u}\in{\rm dom}(\Gamma)$ and suppose that $\bar{V}\subseteq\Gamma(\bar{u})$ is compact. Then, the following statements are equivalent:
\begin{enumerate}
\item[\subpb] $\Gamma$ is calm at $\bar{u}$ for any $\bar{v}\in\bar{V}$.

\item[\subpb] There exist constants $\kappa,\epsilon>0$ such that
$$
\Gamma(u) \cap \left( \bar{V}+\epsilon\mathbb{B}_{\mathcal{E}_2} \right) \subseteq \Gamma(\bar{u}) + \kappa \|u - \bar{u}\|_2 \mathbb{B}_{\mathcal{E}_2} \quad \mbox{for all } u\in\mathcal{E}_1.
$$
\end{enumerate}
\resetspb
%$\Gamma$ is calm at $\bar{x}$ for $\bar{y}$ (or equivalently, $\Gamma^{-1}$ is metrically subregular at $\bar{y}$ for $\bar{x}$). 
\end{prop}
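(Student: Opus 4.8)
The plan is to prove the two implications separately, with statement (b) $\Rightarrow$ statement (a) being essentially immediate and the reverse direction carrying all the content, driven by the compactness of $\bar{V}$.

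For the easy direction, suppose statement (b) holds with constants $\kappa,\epsilon>0$ and fix an arbitrary $\bar{v}\in\bar{V}$. Since $\bar{v}\in\bar{V}$, we have $\mathbb{B}_{\mathcal{E}_2}(\bar{v},\epsilon)\subseteq\bar{V}+\epsilon\mathbb{B}_{\mathcal{E}_2}$, so intersecting the left-hand side of (b) with the smaller ball gives $\Gamma(u)\cap\mathbb{B}_{\mathcal{E}_2}(\bar{v},\epsilon)\subseteq\Gamma(u)\cap(\bar{V}+\epsilon\mathbb{B}_{\mathcal{E}_2})\subseteq\Gamma(\bar{u})+\kappa\|u-\bar{u}\|_2\mathbb{B}_{\mathcal{E}_2}$ for all $u\in\mathcal{E}_1$. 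This is precisely calmness of $\Gamma$ at $\bar{u}$ for $\bar{v}$, with the very same constants, and since $\bar{v}\in\bar{V}$ was arbitrary, statement (a) follows.

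For the substantive direction, statement (a) $\Rightarrow$ statement (b), I would proceed by a finite-subcover argument. For each $\bar{v}\in\bar{V}$, calmness supplies constants $\kappa_{\bar{v}},\epsilon_{\bar{v}}>0$ with $\Gamma(u)\cap\mathbb{B}_{\mathcal{E}_2}(\bar{v},\epsilon_{\bar{v}})\subseteq\Gamma(\bar{u})+\kappa_{\bar{v}}\|u-\bar{u}\|_2\mathbb{B}_{\mathcal{E}_2}$ for all $u$. The open balls $\{y\in\mathcal{E}_2 : \|y-\bar{v}\|_2<\epsilon_{\bar{v}}/2\}$, indexed by $\bar{v}\in\bar{V}$, form an open cover of the compact set $\bar{V}$, so I extract a finite subcover corresponding to points $\bar{v}_1,\ldots,\bar{v}_N\in\bar{V}$. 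The proposed uniform constants are $\kappa:=\max_{1\le i\le N}\kappa_{\bar{v}_i}$ and $\epsilon:=\min_{1\le i\le N}\epsilon_{\bar{v}_i}/2$. To verify (b), I take any $v\in\Gamma(u)\cap(\bar{V}+\epsilon\mathbb{B}_{\mathcal{E}_2})$, pick $\bar{v}\in\bar{V}$ with $\|v-\bar{v}\|_2\le\epsilon$, and use the subcover to find an index $i$ with $\|\bar{v}-\bar{v}_i\|_2<\epsilon_{\bar{v}_i}/2$; the triangle inequality together with $\epsilon\le\epsilon_{\bar{v}_i}/2$ then gives $\|v-\bar{v}_i\|_2<\epsilon_{\bar{v}_i}$, so $v\in\Gamma(u)\cap\mathbb{B}_{\mathcal{E}_2}(\bar{v}_i,\epsilon_{\bar{v}_i})$. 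Applying the calmness estimate attached to $\bar{v}_i$ and using $\kappa_{\bar{v}_i}\le\kappa$ places $v$ in $\Gamma(\bar{u})+\kappa\|u-\bar{u}\|_2\mathbb{B}_{\mathcal{E}_2}$, as required.

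The one point that must be handled carefully is the radius bookkeeping: covering $\bar{V}$ by \emph{half}-radius balls is exactly what lets the slack $\epsilon$ in the set $\bar{V}+\epsilon\mathbb{B}_{\mathcal{E}_2}$ be absorbed, so that every relevant point still lands inside one of the full $\epsilon_{\bar{v}_i}$-balls on which a calmness estimate is available. This is also the only place where compactness is used, and it is indispensable—without a finite subcover one cannot collapse the family of local constants $\{(\kappa_{\bar{v}},\epsilon_{\bar{v}})\}_{\bar{v}\in\bar{V}}$ into a single pair, since $\inf_{\bar{v}\in\bar{V}}\epsilon_{\bar{v}}$ might well be zero.
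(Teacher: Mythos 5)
Your proof is correct. Both directions are handled soundly: the easy implication is the same as the paper's, and your finite-subcover argument for (a) $\Rightarrow$ (b) is valid — the triangle-inequality bookkeeping with $\epsilon=\min_i\epsilon_{\bar{v}_i}/2$ and $\kappa=\max_i\kappa_{\bar{v}_i}$ goes through exactly as you wrote it.

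The overall skeleton (compactness $\to$ finite subcover $\to$ take the maximum of the finitely many calmness constants) matches the paper, but you handle the one delicate step differently, and your way is cleaner. The paper covers $\bar{V}$ by the \emph{full}-radius open balls $\bar{v}+\epsilon(\bar{v})\mathbb{B}^\circ_{\mathcal{E}_2}$, extracts a finite subcover, and then still has to show that some uniform enlargement $\bar{V}+\epsilon\mathbb{B}_{\mathcal{E}_2}$ remains inside the union of those finitely many balls; it does this with a Lebesgue-number-type argument by contradiction, extracting convergent subsequences $v_k\to v$, $w_k\to v$ and deriving a contradiction with the covering. Your half-radius trick eliminates that entire step: by covering $\bar{V}$ with balls of radius $\epsilon_{\bar{v}}/2$ and setting $\epsilon=\min_i\epsilon_{\bar{v}_i}/2$, the absorption of the slack is a one-line triangle inequality, and you obtain an \emph{explicit} value of $\epsilon$ rather than one whose existence is established non-constructively. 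The trade-off is negligible — your constants are potentially smaller (half-radius balls may force a larger $N$), but nothing in the statement cares about that — so your route is the preferable one: same compactness input, strictly less machinery.
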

\begin{proof}
It is clear that (b) implies (a).  Hence, suppose that (a) holds.  By~\eqref{eq:calmness}, given any $\bar{v}\in\bar{V}$, there exist constants $\kappa(\bar{v}),\epsilon(\bar{v})>0$ such that
\begin{equation} \label{eq:cpt-calm}
\Gamma(u) \cap \mathbb{B}_{\mathcal{E}_2}(\bar{v},\epsilon(\bar{v})) \subseteq \Gamma(\bar{u}) + \kappa(\bar{v})\|u-\bar{u}\|_2\mathbb{B}_{\mathcal{E}_2} \quad\mbox{for all } u\in\mathcal{E}_1.
\end{equation}
Let $\mathbb{B}_{\mathcal{E}_2}^\circ$ denote the open unit ball around the origin in $\mathcal{E}_2$.  Then, the set $\bigcup_{\bar{v}\in\bar{V}} \left( \bar{v} + \epsilon(\bar{v})\mathbb{B}_{\mathcal{E}_2}^\circ \right)$ forms an open cover of the compact set $\bar{V}$.  Hence, by the Heine-Borel theorem, there exist $N$ points (where $N\ge1$ is finite) $\bar{v}_1,\ldots,\bar{v}_N \in \bar{V}$ such that $\bar{V}\subseteq\bigcup_{i=1}^N \left( \bar{v}_i + \epsilon(\bar{v}_i)\mathbb{B}_{\mathcal{E}_2}^\circ \right)$.  We claim that there exists a constant $\epsilon>0$ such that $\bar{V}+\epsilon\mathbb{B}_{\mathcal{E}_2} \subseteq \bigcup_{i=1}^N \left( \bar{v}_i + \epsilon(\bar{v}_i)\mathbb{B}_{\mathcal{E}_2}^\circ \right)$.  Indeed, suppose that this is not the case.  Then, for $k=1,2,\ldots$, we can find vectors $w_k,v_k \in \mathcal{E}_2$ such that for $k=1,2,\ldots$, 
\begin{eqnarray*}
v_k &\in& \bar{V}, \\
\| v_k - w_k \|_2 &\le& 1/k, \\
\| w_k - \bar{v}_i \|_2 &\ge& \epsilon(\bar{v}_i) \quad\mbox{for } i=1,\ldots,N.
\end{eqnarray*}
Since $\bar{V}$ is compact and $\{v_k\}_{k\ge1} \subseteq \bar{V}$, by passing to a subsequence if necessary, we may assume that $v_k\limto v$ for some $v \in \bar{V}$.  Then, we have
$$ 0 \le \|v-w_k\|_2 \le \|v-v_k\|_2 + \|v_k-w_k\|_2 \limto 0, $$
which shows that $w_k\limto v$.  On the other hand, since $v\in\bar{V}$, there exists an index $i\in\{1,\ldots,N\}$ such that $\|v-\bar{v}_i\|_2 = \delta < \epsilon(\bar{v}_i)$.  This implies that
$$ \epsilon(\bar{v}_i) \le \|w_k-\bar{v}_i\|_2 \le \|w_k-v\|_2 + \|v-\bar{v}_i\|_2 = \|w_k - v\|_2 + \delta $$
for $k=1,2,\ldots,$ which contradicts the fact that $w_k\limto v$.  Thus, the claim is established.

Now, upon setting $\kappa=\max_{i=1,\ldots,N} \kappa(\bar{v}_i)$, we obtain
$$
\Gamma(u) \cap \left( \bar{V}+\epsilon\mathbb{B}_{\mathcal{E}_2} \right) \subseteq \Gamma(u) \cap \bigcup_{i=1}^N \left( \bar{v}_i + \epsilon(\bar{v}_i)\mathbb{B}_{\mathcal{E}_2}^\circ \right) \subseteq \Gamma(\bar{u}) + \kappa \|u - \bar{u}\|_2 \mathbb{B}_{\mathcal{E}_2} \quad \mbox{for all } u\in\mathcal{E}_1,
$$
where the second inclusion is due to~\eqref{eq:cpt-calm} and the fact that $\bar{v}_i + \epsilon(\bar{v}_i)\mathbb{B}_{\mathcal{E}_2}^\circ \subseteq \mathbb{B}_{\mathcal{E}_2}(\bar{v}_i,\epsilon(\bar{v}_i))$ for $i=1,\ldots,N$.  This completes the proof.
\end{proof}

\section{Sufficient Conditions for the Validity of the Error Bound~\eqref{eq:err-bd}}\label{sec:charac-eb}
Following our discussion in Section~\ref{subsec:sva}, we now show that under Assumptions~\ref{ass:smooth-func} and~\ref{ass:level-bd}, the error bound~\eqref{eq:err-bd} is implied by certain calmness property of the multi-function $\Gamma$ given in~\eqref{eq:def-sol-map}.  This is achieved by exploring the relationships between error bounds defined using different test sets and residual functions.  For the sake of convenience, we shall refer to the multi-function $\Gamma$ given in~\eqref{eq:def-sol-map} as the \emph{solution map} associated with Problem~\eqref{eq:str-cvx-prob} in the sequel.

\subsection{Error Bound with Neighborhood-Based Test Set}
To begin, recall that the error bound~\eqref{eq:err-bd} involves the test set $\left\{ x\in\mathcal{E} \mid F(x) \le \zeta, \, \|R(x)\|_2 \le \epsilon \right\}$, where $\zeta\ge v^*$ is arbitrary and $\epsilon>0$ depends on $\zeta$.  The following proposition shows that under Assumptions~\ref{ass:smooth-func} and~\ref{ass:level-bd}, we can replace the test set by a neighborhood of $\mathcal{X}$.  This would facilitate our analysis of the relationship between the error bound~\eqref{eq:err-bd} and the calmness of the solution map $\Gamma$, as the latter is also defined in terms of a neighborhood of $\mathcal{X}$.
\begin{prop}\label{prop:equiv-variant}
Consider the optimization problem~\eqref{eq:str-cvx-prob}.  Under Assumptions~\ref{ass:smooth-func} and~\ref{ass:level-bd}, the error bound~\eqref{eq:err-bd} holds if there exist constants $\kappa,\rho>0$ such that
\begin{equation} \tag{EBN} \label{eq:err-bd-vari}
d(x,\mathcal{X}) \leq \kappa\|R(x)\|_2 \quad\mbox{for all } x \in {\rm dom}(f) \mbox{ with } d(x,\mathcal{X})\leq\rho. 
\end{equation}
\end{prop}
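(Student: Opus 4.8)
The plan is to fix an arbitrary $\zeta\ge v^*$ and produce a radius $\epsilon=\epsilon(\zeta)>0$ such that the test set of \eqref{eq:err-bd}, namely $\{x\in\mathcal{E}\mid F(x)\le\zeta,\ \|R(x)\|_2\le\epsilon\}$, is contained in the $\rho$-neighborhood $\{x\in\mbox{dom}(f)\mid d(x,\mathcal{X})\le\rho\}$ on which \eqref{eq:err-bd-vari} is assumed to hold; once this containment is in place, \eqref{eq:err-bd} follows immediately with the same constant $\kappa$ as in \eqref{eq:err-bd-vari}. The two ingredients that make this possible are already available: by Assumption~\ref{ass:level-bd} and the discussion recorded after it, the level set $L(\zeta)=\{x\in\mathcal{E}\mid F(x)\le\zeta\}$ is compact, and since $F(x)\le\zeta<+\infty$ forces $f(x)<+\infty$ we have $L(\zeta)\subseteq\mbox{dom}(f)$, where $\mbox{dom}(f)$ is open and $f$ is continuously differentiable. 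Consequently the residual map $R$ in \eqref{eq:residual-map-def} is well defined and continuous on $L(\zeta)$, since $\nabla f$ is continuous on $\mbox{dom}(f)$, the proximal map $\mbox{prox}_P$ is globally nonexpansive, and the identity map is continuous.

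The heart of the argument is the following bridging claim, proved by a routine compactness contradiction: for every $\delta>0$ there exists $\epsilon>0$ such that every $x\in L(\zeta)$ with $\|R(x)\|_2\le\epsilon$ satisfies $d(x,\mathcal{X})\le\delta$. If this failed for some $\delta>0$, one could extract a sequence $\{x_k\}\subseteq L(\zeta)$ with $\|R(x_k)\|_2\le 1/k$ and $d(x_k,\mathcal{X})>\delta$. By compactness of $L(\zeta)$, passing to a subsequence gives $x_k\limto x^\star\in L(\zeta)$; continuity of $R$ yields $\|R(x^\star)\|_2=0$, and since $x\in\mathcal{X}$ if and only if $\|R(x)\|_2=0$ (as noted after \eqref{eq:prox-oper}), we get $x^\star\in\mathcal{X}$, \ie\ $d(x^\star,\mathcal{X})=0$. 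But $d(\cdot,\mathcal{X})$ is continuous, so $d(x_k,\mathcal{X})\limto d(x^\star,\mathcal{X})=0$, contradicting $d(x_k,\mathcal{X})>\delta$ for all $k$.

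To finish, apply the claim with $\delta=\rho$ to obtain $\epsilon=\epsilon(\zeta)>0$ such that every $x\in L(\zeta)$ with $\|R(x)\|_2\le\epsilon$ obeys $d(x,\mathcal{X})\le\rho$. Any $x$ in the test set of \eqref{eq:err-bd} then lies in $L(\zeta)\subseteq\mbox{dom}(f)$ and satisfies $d(x,\mathcal{X})\le\rho$, so \eqref{eq:err-bd-vari} applies and gives $d(x,\mathcal{X})\le\kappa\|R(x)\|_2$; since $\zeta\ge v^*$ was arbitrary, \eqref{eq:err-bd} holds. I do not expect any serious difficulty here, as there is no computation to grind through; the only points requiring care are the two structural facts used above, namely the inclusion $L(\zeta)\subseteq\mbox{dom}(f)$ (so that $R$ is defined and continuous throughout $L(\zeta)$, via the nonexpansiveness of $\mbox{prox}_P$) and the equivalence $\|R(x)\|_2=0\Leftrightarrow x\in\mathcal{X}$. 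The genuine subtlety is conceptual rather than technical: \eqref{eq:err-bd} and \eqref{eq:err-bd-vari} use different test sets—one cut out by the value $F(x)$ together with the residual, the other by the distance to $\mathcal{X}$—and compactness of the level set $L(\zeta)$ is precisely what reconciles them.
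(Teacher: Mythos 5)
Your proposal is correct and takes essentially the same approach as the paper: both reduce~\eqref{eq:err-bd} to showing that, for each $\zeta\ge v^*$, the test set $\left\{ x\in\mathcal{E} \mid F(x)\le\zeta,\ \|R(x)\|_2\le\epsilon \right\}$ is contained in the $\rho$-neighborhood of $\mathcal{X}$ for suitably small $\epsilon$, and both prove this by a contradiction argument using compactness of the level set $L(\zeta)$ together with continuity of $R$ (from the nonexpansiveness of $\mbox{prox}_P$ and continuity of $\nabla f$). The only difference is cosmetic: you make explicit the inclusion $L(\zeta)\subseteq\mbox{dom}(f)$ and the continuity of $d(\cdot,\mathcal{X})$, both of which the paper uses implicitly.
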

%In this section, we will first utilize the tools introduced in Section~\ref{subsec:sva} to give a sufficient condition for the error bound~\eqref{eq:err-bd} to hold. 
%%%%%%%%%%%%%%%%%%%%%
%Then, to further enhance the usability of our result, we will show that the calmness property just mentioned can be guaranteed by two simple regularity conditions on the optimal solution set $\mathcal{X}$ (Theorem~\ref{thm:two-conditions}).  
%%%%%%%%%%%%%%%%%%%%%
\begin{proof}
To establish the error bound~\eqref{eq:err-bd}, it suffices to show that for any $\zeta\geq v^*$, there exists an $\epsilon>0$ such that
$$ d(x,\mathcal{X}) \le \rho \quad\mbox{for all } x\in\mathcal{E} \mbox{ with } F(x)\leq\zeta, \, \|R(x)\|_2\leq\epsilon. $$
Suppose that this does not hold.  Then, there exist a scalar $\zeta \ge v^*$ and a sequence $\{x_k\}_{k\ge1}$ in $\mathcal{E}$ such that $F(x_k)\leq\zeta$ for $k=1,2,\ldots$ and $\|R(x_k)\|_2\rightarrow 0$, but $d(x_k,\mathcal{X})>\rho$ for $k=1,2,\ldots$.  Since $\left\{ x\in\mathcal{E} \mid F(x)\leq\zeta \right\}$ is compact by Assumption~\ref{ass:level-bd}, by passing to a subsequence if necessary, we may assume that $x_k\limto\bar{x}$ for some $\bar{x}\in\mathcal{E}$.  Using the fact that $\mbox{prox}_P$ is 1-Lipschitz continuous on $\mathcal{E}$ (see, \eg,~\cite[Lemma 2.4]{CW05}) and $\nabla f$ is continuous on $\mbox{dom}(f)$ (Assumption~\ref{ass:smooth-func}(a-i)), we see that $R$ is continuous on $\mbox{dom}(f)$.  This, together with the fact that $\|R(x_k)\|_2\rightarrow 0$, implies that $\|R(\bar{x})\|_2=0$; \ie, $\bar{x}\in\mathcal{X}$.  However, this contradicts the fact that $d(x_k,\mathcal{X})>\rho$ for $k=1,2,\ldots$, and the proof is completed.
\end{proof}

\medskip
Before we proceed, two remarks are in order.  First, the reverse implication in Proposition~\ref{prop:equiv-variant} is also true if, in addition to Assumptions~\ref{ass:smooth-func} and~\ref{ass:level-bd}, the optimal solution set $\mathcal{X}$ of Problem~\eqref{eq:str-cvx-prob} is contained in the relative interior of $\mbox{dom}(F)$.  However, since we will mostly focus on sufficient conditions for the error bound~\eqref{eq:err-bd} to hold, we will not indulge in proving this here.  Second, for those instances of Problem~\eqref{eq:str-cvx-prob} that do not satisfy Assumption~\ref{ass:level-bd}, one or both of the error bounds~\eqref{eq:err-bd} and~\eqref{eq:err-bd-vari} could fail to hold.  The following example demonstrates such possibility.

\medskip
\noindent{\bf Example} \, Let $C=\left\{ (x,y)\in\R^2 \mid x\le0,\,y\ge0 \right\}$.  Define the function $f:(-\infty,1)\times\R\limto\R$ by
$$ f(x,y) = \left\{
\begin{array}{c@{\quad}l}
y\exp((x-1)/y) & \mbox{if } x<1 \mbox{ and }y>0, \\
0 & \mbox{if } x<1 \mbox{ and }y\le0
\end{array}
\right. $$
and take $P:\R^2\rightarrow\{0\}\cup\{+\infty\}$ to be the indicator function of $C$.  Furthermore, let $F:(-\infty,1)\times\R\limto(-\infty,+\infty]$ be given by $F(x,y)=f(x,y)+P(x,y)$.  It can be verified that $f$ is convex and continuously differentiable on $(-\infty,1)\times\R$ with
$$%\begin{equation} \label{eq:bad-exam-grad}
\nabla f(x,y) = \left\{
\begin{array}{l@{\quad}l}
\displaystyle{ \left[ \exp\left(\frac{x-1}{y}\right),  \left( 1-\frac{x-1}{y} \right)\exp\left(\frac{x-1}{y}\right) \right]^T } & \mbox{if } x\in(-\infty,1) \mbox{ and } y>0, \\
\noalign{\medskip}
\bz & \mbox{if } x\in(-\infty,1) \mbox{ and } y\le0.
\end{array}
\right.
$$%\end{equation}
Moreover, we have
$$ 
\left\{ (x,y) \in \R^2 \mid F(x,y) \le \zeta \right\} = 
\left\{ \begin{array}{l@{\quad}l}
\emptyset & \mbox{if } \zeta < 0, \\
\left\{ (x,y) \in \R^2 \mid x \le 0, \, y=0 \right\} & \mbox{if } \zeta=0, \\
\left\{ (x,y) \in \R^2 \mid x \le \min\{0,1+y\ln(\zeta/y)\},\, y\ge0 \right\} & \mbox{if } \zeta > 0,
\end{array}
\right.
$$
which shows that the level sets of $F$ are closed but not bounded.  It follows that $F$ is a closed proper convex function with $\mbox{dom}(F)=C$ and
$$ 0 = v^* = \min_{(x,y)\in\R^2} F(x,y), \quad \mathcal{X} = \left\{ (x,y) \in \R^2: x \le 0, \, y=0 \right\}. $$

Next, we determine the residual map $R$ on $\mbox{dom}(F)$.  Recall that
$$ R(x,y) = \begin{bmatrix} x \\ y \end{bmatrix} - \mbox{prox}_P\left( \begin{bmatrix} x \\ y \end{bmatrix} - \nabla f(x,y) \right) \quad\mbox{for all }(x,y)\in(-\infty,1)\times\R. $$
Since $P$ is the indicator function of $C$, it is easy to see that $\mbox{prox}_P$ is the projection operator onto $C$.  Note that for each $y\ge0$, the function 
$$ x\mapsto y - \left( 1-\frac{x-1}{y} \right)\exp\left(\frac{x-1}{y}\right) $$
is decreasing in $x\in(-\infty,0]$.  Moreover, it can be verified that $y-(1+1/y)\exp(-1/y) \ge 0$ for all $y\ge0$.  It follows that
$$ R(x,y) = \nabla f(x,y) \quad\mbox{for all } (x,y)\in\mbox{dom}(F). $$
In particular, we have $r(x,y)=\|R(x,y)\|_2=\|\nabla f(x,y)\|_2$ for all $(x,y)\in\mbox{dom}(F)$.

Now, observe that for any $\zeta>v^*=0$, if $(\bar{x},\bar{y})\notin\mathcal{X}$ satisfies $F(\bar{x},\bar{y})\leq\zeta$, then $F(x,\bar{y})\leq\zeta$ for any $x \le \bar{x}$.  However, we have $d((x,\bar{y}),\mathcal{X}) = \|\bar{y}\|_2 \neq 0$ for any $x\le0$ and $\lim_{x\rightarrow-\infty} \|R(x,\bar{y})\|_2 = 0$.  It follows that there do not exist constants $\kappa,\epsilon>0$ such that~\eqref{eq:err-bd} holds.  Similarly, for any $\rho>0$, we have
$$ \left\{ (x,y)\in\mbox{dom}(F) \mid d((x,y),\mathcal{X}) \le \rho \right\} = \left\{ (x,y) \in \mbox{dom}(F) \mid 0 \le y \le \rho \right\}. $$
Since $\lim_{x\limto-\infty} \|R(x,y)\|_2 = 0$ for any $y\ge0$, there does not exist a constant $\kappa>0$ such that~\eqref{eq:err-bd-vari} holds. In fact, the same arguments show that the instance in question does not possess a H\"{o}lderian error bound; \ie, the error bounds~\eqref{eq:err-bd} and~\eqref{eq:err-bd-vari} fail to hold even if one replaces the inequality $d(x,\mathcal{X}) \le \kappa\|R(x)\|_2$ by $d(x,\mathcal{X}) \le \kappa\|R(x)\|_2^\alpha$ for any $\alpha\in(0,1)$. \endproof

\subsection{Error Bound with Alternative Residual Function}
%Now, let us turn our attention to the residual function $x\mapsto\|R(x)\|_2$ in the error bound~\eqref{eq:err-bd-vari}.  Although this residual function possesses the fundamental property that $\mathcal{X}=\{x\in\mathcal{E} \mid \|R(x)\|_2=0\}$, it does not explicitly account for the structural properties of Problem~\eqref{eq:str-cvx-prob} under Assumption~\ref{ass:smooth-func}.
As the reader would recall, a motivation for using $x\mapsto\|R(x)\|_2$ as the residual function is that the optimal solution set $\mathcal{X}$ can be characterized as $\mathcal{X}=\{x\in\mathcal{E} \mid R(x)=\bz\}$. Since $\mathcal{X}$ admits the alternative characterization~\eqref{eq:optimal-set}, we can define another residual function $r_{\rm alt}:\mathcal{E}\limto\R_+$ by
$$ r_{\rm alt}(x) := \|\mathcal{A}(x)-\bar{y}\|_2 + d(-\bar{g},\del P(x)) $$
and consider the error bound
\begin{equation} \tag{EBR} \label{eq:err-bd-alt-res}
d(x,\mathcal{X}) \leq \kappa\left( \|\mathcal{A}(x)-\bar{y}\|_2 + d(-\bar{g},\del P(x)) \right) \quad\mbox{for all } x \in \mathcal{E} \mbox{ with } d(x,\mathcal{X})\leq\rho,
\end{equation}
where $\kappa,\rho>0$ are constants and $\bar{y}\in\mathcal{T}$, $\bar{g}\in\mathcal{E}$ are given in Proposition~\ref{prop:opt-invariant}.  Our interest in the error bound~\eqref{eq:err-bd-alt-res} stems from the following result, which reveals that it is closely related to certain calmness property of the solution map $\Gamma$:
\begin{prop} \label{prop:err-bd-calm}
Suppose that Problem~\eqref{eq:str-cvx-prob} satisfies Assumptions~\ref{ass:smooth-func} and~\ref{ass:level-bd}. Let $\bar{y}\in\mathcal{T}$ and $\bar{g}\in\mathcal{E}$ be as in Proposition~\ref{prop:opt-invariant}.  Then, the error bound~\eqref{eq:err-bd-alt-res} holds if and only if the solution map $\Gamma:\mathcal{T}\times\mathcal{E}\rightrightarrows\mathcal{E}$ is calm at $(\bar{y},\bar{g})$ for any $\bar{x}\in\Gamma(\bar{y},\bar{g})$.
\end{prop}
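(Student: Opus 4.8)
The plan is to prove both implications by routing through the \emph{uniform} calmness statement supplied by Proposition~\ref{prop:ulc-calm}. The starting point is that Proposition~\ref{prop:opt-invariant} gives $\Gamma(\bar{y},\bar{g})=\mathcal{X}$, and Assumption~\ref{ass:level-bd} guarantees that $\mathcal{X}$ is compact. Consequently, the hypothesis ``$\Gamma$ is calm at $(\bar{y},\bar{g})$ for any $\bar{x}\in\Gamma(\bar{y},\bar{g})$'' is exactly statement~(a) of Proposition~\ref{prop:ulc-calm} with $\bar{u}=(\bar{y},\bar{g})$ and $\bar{V}=\mathcal{X}$, and is therefore equivalent to statement~(b): there exist $\kappa,\epsilon>0$ such that
\[
\Gamma(y,g)\cap\left(\mathcal{X}+\epsilon\mathbb{B}_{\mathcal{E}}\right)\subseteq\mathcal{X}+\kappa\|(y,g)-(\bar{y},\bar{g})\|_2\,\mathbb{B}_{\mathcal{E}}\qquad\mbox{for all }(y,g)\in\mathcal{T}\times\mathcal{E}.
\]
I will work with this uniform inclusion throughout. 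Two elementary reformulations make the bridge to~\eqref{eq:err-bd-alt-res} transparent: since $\mathcal{X}$ is compact (hence closed), $\mathcal{X}+t\mathbb{B}_{\mathcal{E}}=\{x\in\mathcal{E}\mid d(x,\mathcal{X})\le t\}$ for every $t\ge0$, so membership in the right-hand side above is the same as the inequality $d(x,\mathcal{X})\le\kappa\|(y,g)-(\bar{y},\bar{g})\|_2$; and, because $\partial P(x)$ is a closed convex set, $d(-\bar{g},\partial P(x))=\inf\{\|g-\bar{g}\|_2\mid -g\in\partial P(x)\}$, with the infimum attained whenever $\partial P(x)\neq\emptyset$.

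For the direction ``calmness $\imply$~\eqref{eq:err-bd-alt-res}'', I would take $\rho=\epsilon$ and fix any $x\in\mathcal{E}$ with $d(x,\mathcal{X})\le\rho$. If $\partial P(x)=\emptyset$, then $d(-\bar{g},\partial P(x))=+\infty$ and~\eqref{eq:err-bd-alt-res} holds trivially; otherwise let $-g^{\star}$ be the projection of $-\bar{g}$ onto $\partial P(x)$, so that $\|g^{\star}-\bar{g}\|_2=d(-\bar{g},\partial P(x))$. Setting $y^{\star}=\mathcal{A}(x)$, we have $x\in\Gamma(y^{\star},g^{\star})$ and $x\in\mathcal{X}+\epsilon\mathbb{B}_{\mathcal{E}}$, so the uniform inclusion yields $d(x,\mathcal{X})\le\kappa\|(y^{\star},g^{\star})-(\bar{y},\bar{g})\|_2$. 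Bounding the Euclidean norm on $\mathcal{T}\times\mathcal{E}$ by the sum of the two block norms then produces precisely~\eqref{eq:err-bd-alt-res}.

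For the converse, I would assume~\eqref{eq:err-bd-alt-res} with constants $\kappa,\rho$ and verify statement~(b) of Proposition~\ref{prop:ulc-calm} with $\epsilon=\rho$. Fixing $(y,g)$ and $x\in\Gamma(y,g)\cap(\mathcal{X}+\rho\mathbb{B}_{\mathcal{E}})$, one has $\mathcal{A}(x)=y$, $-g\in\partial P(x)$, and $d(x,\mathcal{X})\le\rho$. Applying~\eqref{eq:err-bd-alt-res}, using $\|\mathcal{A}(x)-\bar{y}\|_2=\|y-\bar{y}\|_2$ and the inequality $d(-\bar{g},\partial P(x))\le\|g-\bar{g}\|_2$ (valid since $-g\in\partial P(x)$), I obtain $d(x,\mathcal{X})\le\kappa(\|y-\bar{y}\|_2+\|g-\bar{g}\|_2)\le\sqrt{2}\,\kappa\|(y,g)-(\bar{y},\bar{g})\|_2$, which is statement~(b) with constant $\sqrt{2}\kappa$; Proposition~\ref{prop:ulc-calm} then returns the pointwise calmness claim. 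The only genuinely delicate point---and the reason Proposition~\ref{prop:ulc-calm} is invoked at all---is the passage between the \emph{pointwise} calmness condition (with a possibly different radius and constant at each $\bar{x}\in\mathcal{X}$) and the \emph{uniform} error bound~\eqref{eq:err-bd-alt-res} over the single neighborhood $\{x\mid d(x,\mathcal{X})\le\rho\}$; compactness of $\mathcal{X}$ is exactly what makes this uniformization possible. Everything else reduces to the two reformulations above and the equivalence of the $\ell_1$ and $\ell_2$ norms on $\R^2$.
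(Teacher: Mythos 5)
Your proof is correct and takes essentially the same route as the paper's: both directions rest on the uniformization over the compact set $\mathcal{X}$ supplied by Proposition~\ref{prop:ulc-calm}, together with the equivalence of the sum and Euclidean norms on $\mathcal{T}\times\mathcal{E}$. The only cosmetic differences are that you invoke Proposition~\ref{prop:ulc-calm} in both directions (the paper derives the uniform inclusion directly in the ``error bound implies calmness'' direction, which is the trivial implication (b)$\Rightarrow$(a) of that proposition anyway), and that you realize $d(-\bar{g},\partial P(x))$ by projecting onto the closed convex set $\partial P(x)$, whereas the paper quantifies over all subgradients and passes to the infimum.
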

\begin{proof}
Suppose that the error bound~\eqref{eq:err-bd-alt-res} holds.  Let $(y,g)\in\mathcal{T}\times\mathcal{E}$ be arbitrary and suppose that $x\in\Gamma(y,g)\cap(\mathcal{X}+\rho\mathbb{B}_{\mathcal{E}})$. In particular, we have $\mathcal{A}(x)=y$ and $-g\in\del P(x)$. Using the inequality $(a+b)^2\le2(a^2+b^2)$, which is valid for all $a,b\in\R$, we see from~\eqref{eq:err-bd-alt-res} that 
$$ d(x,\mathcal{X}) \le \kappa(\|y-\bar{y}\|_2+\|g-\bar{g}\|_2) \le \sqrt{2}\kappa\|(y,g)-(\bar{y},\bar{g})\|_2. $$
Since $\mathcal{X}=\Gamma(\bar{y},\bar{g})$, this implies that $x\in\Gamma(\bar{y},\bar{g})+\sqrt{2}\kappa\|(y,g)-(\bar{y},\bar{g})\|_2\mathbb{B}_{\mathcal{E}}$.  Hence, $\Gamma$ is calm at $(\bar{y},\bar{g})$ for any $\bar{x}\in\Gamma(\bar{y},\bar{g})$.

Conversely, suppose that $\Gamma$ is calm at $(\bar{y},\bar{g})$ for any $\bar{x}\in\Gamma(\bar{y},\bar{g})$.  Since $\mathcal{X}=\Gamma(\bar{y},\bar{g})$ is compact by Assumption~\ref{ass:level-bd}, Proposition~\ref{prop:ulc-calm} implies the existence of $\kappa,\rho>0$ such that
\begin{equation}\label{eq:upper-lip-sol}
\Gamma(y,g) \cap \left( \mathcal{X} + \rho\mathbb{B}_{\mathcal{E}} \right) \subseteq \mathcal{X} + \kappa\|(y,g)-(\bar{y},\bar{g})\|_2\mathbb{B}_{\mathcal{E}} \quad\mbox{for all } (y,g)\in\mathcal{T}\times\mathcal{E}.
\end{equation}
Now, let $x\in\mathcal{E}$ be such that $d(x,\mathcal{X})\le\rho$ and $\del P(x)\not=\emptyset$.  Using~\eqref{eq:upper-lip-sol} and the inequality $\sqrt{a^2+b^2}\le a+b$, which is valid for all $a,b\ge0$, we have
$$ d(x,\mathcal{X}) \le \kappa\|(\mathcal{A}(x),g)-(\bar{y},\bar{g})\|_2 \le \kappa\left( \|\mathcal{A}(x)-\bar{y}\|_2 + \|g-\bar{g}\|_2 \right) \quad\mbox{for all } -g \in \del P(x). $$
It follows that the error bound~\eqref{eq:err-bd-alt-res} holds.
\end{proof}

\medskip
Since our goal is to link the error bound~\eqref{eq:err-bd} and the calmness of the solution map $\Gamma$, in view of Propositions~\ref{prop:equiv-variant} and~\ref{prop:err-bd-calm}, it remains to understand the relationship between the error bounds~\eqref{eq:err-bd-vari} and~\eqref{eq:err-bd-alt-res}.  Towards that end, we prove the theorem, which constitutes the first main result of this paper:
\begin{thm}\label{thm:eb-upper-lip}
Consider the optimization problem~\eqref{eq:str-cvx-prob}.  Under Assumptions~\ref{ass:smooth-func} and~\ref{ass:level-bd}, the error bound \eqref{eq:err-bd-vari} holds if and only if the error bound~\eqref{eq:err-bd-alt-res} holds.
\end{thm}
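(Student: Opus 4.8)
The plan is to prove both implications by comparing the two residual functions $x\mapsto\|R(x)\|_2$ and $r_{\rm alt}(x)=\|\mathcal{A}(x)-\bar y\|_2+d(-\bar g,\del P(x))$ on a small neighborhood of $\mathcal{X}$. As a preliminary step I would fix a radius $\rho>0$ small enough that the compact set $N:=\{x\in\mathcal{E}\mid d(x,\mathcal{X})\le\rho\}$ is contained in the open set $\mbox{dom}(f)$ (possible since $\mathcal{X}$ is compact and $\mathcal{X}\subseteq\mbox{dom}(f)$), so that $R$ is defined and continuous on $N$ with $R|_{\mathcal{X}}=\bz$; I would also fix a compact convex set $V\subseteq\mbox{dom}(h)$ containing $\mathcal{A}(N)$ and let $\sigma,L_h$ be the strong-convexity modulus and the Lipschitz constant of $\nabla h$ on $V$, as furnished by Assumption~\ref{ass:smooth-func}(a-ii). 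Throughout I write $y=\mathcal{A}(x)$ and use the identity $\bar g-\nabla f(x)=\mathcal{A}^{*}(\nabla h(\bar y)-\nabla h(y))$ together with the resulting bound $\|\bar g-\nabla f(x)\|_2\le\|\mathcal{A}\|L_h\|y-\bar y\|_2$.

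The implication \eqref{eq:err-bd-vari}$\Rightarrow$\eqref{eq:err-bd-alt-res} is the easy one and rests on the nonexpansiveness of $\mbox{prox}_P$. For any $x\in N$ and any $-g\in\del P(x)$ one has $x=\mbox{prox}_P(x-g)$, so
$$ \|R(x)\|_2=\|\mbox{prox}_P(x-\nabla f(x))-\mbox{prox}_P(x-g)\|_2\le\|\nabla f(x)-g\|_2\le\|\mathcal{A}\|L_h\|y-\bar y\|_2+\|g-\bar g\|_2. $$
Minimizing over $-g\in\del P(x)$ (the bound being vacuous when $\del P(x)=\emptyset$) yields $\|R(x)\|_2\le\max\{1,\|\mathcal{A}\|L_h\}\,r_{\rm alt}(x)$, and chaining this with the assumed inequality $d(x,\mathcal{X})\le\kappa\|R(x)\|_2$ delivers \eqref{eq:err-bd-alt-res}.

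The reverse implication \eqref{eq:err-bd-alt-res}$\Rightarrow$\eqref{eq:err-bd-vari} is where the real work lies, and the key difficulty is that $R(x)$ encodes a subgradient of $P$ at the \emph{proximal point} $p:=\mbox{prox}_P(x-\nabla f(x))=x+R(x)$ rather than at $x$ itself: the optimality condition for the prox gives $\xi:=x-\nabla f(x)-p\in\del P(p)$, whereas $r_{\rm alt}(x)$ asks for a subgradient near $-\bar g$ \emph{at} $x$. A direct attempt to bound $r_{\rm alt}(x)$ by $\|R(x)\|_2$ founders on this mismatch and produces only a H\"{o}lderian (square-root) estimate. I would circumvent this with two coordinated estimates. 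The first is a monotonicity-plus-strong-convexity inequality: applying the monotonicity of $\del P$ to the pair $\xi\in\del P(p)$ and $-\bar g\in\del P(\bar x)$, where $\bar x$ is the projection of $x$ onto $\mathcal{X}$ (so that $\mathcal{A}(\bar x)=\bar y$, $\nabla f(\bar x)=\bar g$ by Proposition~\ref{prop:opt-invariant}), and invoking $\langle\nabla h(y)-\nabla h(\bar y),y-\bar y\rangle\ge\sigma\|y-\bar y\|_2^2$, I expect to reach
$$ \sigma\|y-\bar y\|_2^2\le\|\mathcal{A}\|L_h\|y-\bar y\|_2\,\|R(x)\|_2+\|R(x)\|_2\big(\|R(x)\|_2+d(x,\mathcal{X})\big), $$
using $\|p-\bar x\|_2\le\|R(x)\|_2+d(x,\mathcal{X})$ for the last term.

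The second estimate is the crux: rather than invoke the hypothesis \eqref{eq:err-bd-alt-res} at $x$, I would invoke it at the proximal point $p$, where the explicit subgradient $\xi$ is available. Since $d(-\bar g,\del P(p))\le\|\xi+\bar g\|_2\le\|\mathcal{A}\|L_h\|y-\bar y\|_2+\|R(x)\|_2$ and $\|\mathcal{A}(p)-\bar y\|_2\le\|y-\bar y\|_2+\|\mathcal{A}\|\|R(x)\|_2$, applying \eqref{eq:err-bd-alt-res} to $p$ (legitimate after further shrinking $\rho$ so that $d(p,\mathcal{X})\le d(x,\mathcal{X})+\|R(x)\|_2$ stays within the validity radius of \eqref{eq:err-bd-alt-res}, using continuity of $R$ and $R|_{\mathcal{X}}=\bz$) and the triangle inequality $d(x,\mathcal{X})\le d(p,\mathcal{X})+\|R(x)\|_2$ would give a bound of the form $d(x,\mathcal{X})\le a\|y-\bar y\|_2+b\|R(x)\|_2$ with explicit constants $a,b>0$. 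Substituting this into the first estimate to eliminate $d(x,\mathcal{X})$ produces a \emph{homogeneous} quadratic inequality in $\|y-\bar y\|_2$ and $\|R(x)\|_2$, whose resolution yields $\|y-\bar y\|_2\le C\|R(x)\|_2$ with $C$ independent of $x$---now genuinely linear rather than H\"{o}lderian. Feeding this back gives $d(x,\mathcal{X})\le\kappa'\|R(x)\|_2$, i.e., \eqref{eq:err-bd-vari}. I expect the main obstacle to be precisely the realization that \eqref{eq:err-bd-alt-res} must be applied at $p$ and not at $x$, and that this, combined with the quadratic bootstrap, is what upgrades the naive square-root bound to a linear one; the remaining estimates are routine consequences of Assumption~\ref{ass:smooth-func}.
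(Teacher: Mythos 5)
Your proposal is correct and takes essentially the same approach as the paper: your forward direction is the paper's prox-nonexpansiveness argument (using $x=\mbox{prox}_P(x-g)$) verbatim, and your reverse direction rests on exactly the paper's two key steps---invoking \eqref{eq:err-bd-alt-res} at the proximal point $x+R(x)$, where the explicit subgradient $-(\nabla f(x)+R(x))\in\del P(x+R(x))$ is available, and combining strong convexity of $h$ with monotonicity of $\del P$---followed by the resolution of a quadratic inequality. The only differences are cosmetic: you eliminate $d(x,\mathcal{X})$ and solve a homogeneous quadratic for $\|\mathcal{A}(x)-\bar{y}\|_2$ before feeding back, whereas the paper squares its linear bound and solves the quadratic directly in $d(x,\mathcal{X})$, and the paper replaces your soft continuity-plus-compactness shrinking of $\rho$ with the quantitative estimate $\|R(x)\|_2\le L_R\, d(x,\mathcal{X})$ from Lemma~\ref{lem:residual-map}.
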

%Suppose that Problem~\eqref{eq:str-cvx-prob} satisfies Assumptions~\ref{ass:smooth-func} and~\ref{ass:level-bd}.  Let $\bar{y}\in\mathcal{T}$ and $\bar{g}\in\mathcal{E}$ be as in Proposition~\ref{prop:opt-invariant}.  Then, the error bound~\eqref{eq:err-bd-vari} holds if and only if the solution map $\Gamma:\mathcal{T}\times\mathcal{E}\rightrightarrows\mathcal{E}$ is calm at $(\bar{y},\bar{g})$ for any $\bar{x}\in\Gamma(\bar{y},\bar{g})$.
The proof of Theorem~\ref{thm:eb-upper-lip} relies on the following technical result:
\begin{lemma}\label{lem:residual-map}
Suppose that Problem~\eqref{eq:str-cvx-prob} satisfies Assumptions~\ref{ass:smooth-func} and~\ref{ass:level-bd}.  Then, there exists a constant $\rho_0>0$ such that $N_\rho := \left\{ x \in \mathcal{E} \mid d(x,\mathcal{X}) \le \rho \right\} \subseteq {\rm dom}(f)$ for all $\rho\in(0,\rho_0]$.  Moreover, there exist constants $L_A,L_R>0$, which depend on $\rho_0$, such that for all $x\in N_{\rho_0}$, we have
$$ \|\nabla f(x) - \bar{g}\|_2 \leq L_A\|\mathcal{A}(x) - \bar{y}\|_2 \quad\mbox{and}\quad \| R(x) \|_2 \leq L_R\cdot d(x,\mathcal{X}), $$
%\begin{eqnarray*}
%&& \|\nabla f(x) - \bar{g}\|_2 \leq L_A\|\mathcal{A}(x) - \bar{y}\|_2 \leq L_f\cdot d(x,\mathcal{X})
%\end{eqnarray*}
where $\bar{y}\in\mathcal{T}$ and $\bar{g}\in\mathcal{E}$ are given in Proposition \ref{prop:opt-invariant}.
\end{lemma}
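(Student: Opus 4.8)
The plan is to first carve out the neighborhood $N_{\rho_0}$ and then derive the two estimates by exploiting the explicit form $\nabla f(x)=\mathcal{A}^{*}\nabla h(\mathcal{A}(x))+c$ together with the local Lipschitz behavior granted by Assumption~\ref{ass:smooth-func}(a-ii) and the nonexpansiveness of $\mbox{prox}_P$. First I would observe that every $x\in\mathcal{X}$ has $F(x)=v^*<+\infty$, hence $f(x)<+\infty$, so $\mathcal{X}\subseteq\mbox{dom}(f)=\mathcal{A}^{-1}(\mbox{dom}(h))$; since the latter set is open and $\mathcal{X}$ is compact by Assumption~\ref{ass:level-bd}, $\mathcal{X}$ is a compact subset of an open set. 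If $\mbox{dom}(f)=\mathcal{E}$ any $\rho_0>0$ works; otherwise the continuous map $z\mapsto d(z,\mathcal{E}\setminus\mbox{dom}(f))$ is strictly positive on the compact set $\mathcal{X}$ and thus attains a positive minimum there, and taking $\rho_0$ to be half of this minimum guarantees $N_\rho=\mathcal{X}+\rho\mathbb{B}_{\mathcal{E}}\subseteq\mbox{dom}(f)$ for all $\rho\in(0,\rho_0]$. Moreover $N_{\rho_0}=\mathcal{X}+\rho_0\mathbb{B}_{\mathcal{E}}$ is the Minkowski sum of two compact convex sets, hence compact and convex, which is precisely what is required to invoke Assumption~\ref{ass:smooth-func}(a-ii).

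For the first inequality, using $\nabla f(x)=\mathcal{A}^{*}\nabla h(\mathcal{A}(x))+c$ and $\bar{g}=\mathcal{A}^{*}\nabla h(\bar{y})+c$ from Proposition~\ref{prop:opt-invariant}, I would write
$$ \nabla f(x)-\bar{g}=\mathcal{A}^{*}\bigl(\nabla h(\mathcal{A}(x))-\nabla h(\bar{y})\bigr), $$
so that $\|\nabla f(x)-\bar{g}\|_2\le\|\mathcal{A}\|\cdot\|\nabla h(\mathcal{A}(x))-\nabla h(\bar{y})\|_2$. To control the last factor I set $V:=\conv\bigl(\mathcal{A}(N_{\rho_0})\bigr)$. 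Since $N_{\rho_0}$ is compact and $\mathcal{A}$ is linear, $\mathcal{A}(N_{\rho_0})$ is compact, so $V$ is compact; and because $h$ is convex its effective domain $\mbox{dom}(h)$ is convex, whence $V\subseteq\mbox{dom}(h)$ as $\mathcal{A}(N_{\rho_0})\subseteq\mbox{dom}(h)$. Thus $V$ is a compact convex subset of $\mbox{dom}(h)$, and by Assumption~\ref{ass:smooth-func}(a-ii) $\nabla h$ is Lipschitz on $V$ with some constant $L_h(V)$. As both $\mathcal{A}(x)$ (for $x\in N_{\rho_0}$) and $\bar{y}=\mathcal{A}(\bar{x})$ (for any $\bar{x}\in\mathcal{X}\subseteq N_{\rho_0}$) lie in $V$, this yields the first inequality with $L_A:=\|\mathcal{A}\|\,L_h(V)$.

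For the second inequality, fix $x\in N_{\rho_0}$ and let $\bar{x}$ be its projection onto the compact convex set $\mathcal{X}$, so that $\|x-\bar{x}\|_2=d(x,\mathcal{X})$ and $\bar{x}\in N_{\rho_0}$. Comparing the optimality condition of Problem~\eqref{eq:str-cvx-prob} with that of the proximal map shows $R(\bar{x})=\bz$, so I would write $R(x)=R(x)-R(\bar{x})$ and expand
$$ R(x)-R(\bar{x})=\bigl[\mbox{prox}_P(x-\nabla f(x))-\mbox{prox}_P(\bar{x}-\nabla f(\bar{x}))\bigr]-(x-\bar{x}). $$
Applying the $1$-Lipschitz continuity of $\mbox{prox}_P$, the triangle inequality, and $\nabla f(\bar{x})=\bar{g}$ gives $\|R(x)\|_2\le 2\,\|x-\bar{x}\|_2+\|\nabla f(x)-\bar{g}\|_2$. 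Bounding the last term via $\|\nabla f(x)-\bar{g}\|_2\le\|\mathcal{A}\|^2 L_h(V)\,\|x-\bar{x}\|_2$—the Lipschitz estimate for $\nabla f$ on $N_{\rho_0}\subseteq\mathcal{A}^{-1}(V)$, equivalently the first inequality combined with $\|\mathcal{A}(x)-\bar{y}\|_2\le\|\mathcal{A}\|\,\|x-\bar{x}\|_2$—then produces the second inequality with $L_R:=2+\|\mathcal{A}\|^2 L_h(V)$.

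The individual steps are routine, and the one place that genuinely requires care is the construction of the auxiliary set $V$. Since Assumption~\ref{ass:smooth-func}(a-ii) supplies Lipschitz continuity of $\nabla h$ only on compact \emph{convex} subsets of $\mbox{dom}(h)$, it is essential to pass to the convex hull of $\mathcal{A}(N_{\rho_0})$ and to check—using the convexity of $\mbox{dom}(h)$—that this hull remains inside $\mbox{dom}(h)$. Once $V$ and its single Lipschitz constant $L_h(V)$ are secured, both inequalities follow immediately.
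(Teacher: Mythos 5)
Your proof is correct and takes essentially the same route as the paper's: the same compactness--openness argument to obtain $N_{\rho_0}\subseteq{\rm dom}(f)$, the same Lipschitz estimate for $\nabla h$ on a compact convex subset of ${\rm dom}(h)$ for the first inequality, and the same projection-plus-nonexpansiveness-of-$\mbox{prox}_P$ argument (yielding the identical constant $L_R=2+\|\mathcal{A}\|^2 L_h(V)$) for the second. Your extra step of passing to $V=\conv\left(\mathcal{A}(N_{\rho_0})\right)$ is sound but not strictly necessary, since $\mathcal{X}$ is convex, hence $N_{\rho_0}=\mathcal{X}+\rho_0\mathbb{B}_{\mathcal{E}}$ and its linear image $\mathcal{A}(N_{\rho_0})$ are already compact and convex, which is why the paper applies Assumption~\ref{ass:smooth-func}(a-ii) to $\mathcal{A}(N_{\rho_0})$ directly.
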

\begin{proof}
Recall from the discussion in Section~\ref{subsec:setup} that $\mbox{dom}(f)$ is open.  On the other hand, the optimal solution set $\mathcal{X}$ is compact by Assumption~\ref{ass:level-bd}.  Since $\mathcal{X} \subseteq \mbox{dom}(f)$, a standard argument shows that $N_{\rho_0} \subseteq \mbox{dom}(f)$ for some $\rho_0>0$.  Moreover, we have $N_\rho \subseteq N_{\rho'}$ whenever $0<\rho<\rho'$.

Clearly, the set $N_{\rho_0}$ is compact.  This implies that $\mathcal{A}(N_{\rho_0}) = \left\{ \mathcal{A}(x) \in \mathcal{T} \mid x \in N_{\rho_0} \right\} \subseteq \mbox{dom}(h)$ is also compact.  By Assumption~\ref{ass:smooth-func}(a-ii), $\nabla h$ is Lipschitz continuous on $\mathcal{A}(N_{\rho_0})$.  Hence, for any $x\in N_{\rho_0}$, there exists an $L_A>0$ such that
$$ \| \nabla f(x) - \bar{g} \|_2 = \left\| \mathcal{A}^*\nabla h(\mathcal{A}(x)) - \mathcal{A}^*\nabla h(\bar{y}) \right\|_2 \le \|\mathcal{A}^*\| \cdot \|\nabla h(\mathcal{A}(x)) - \nabla h(\bar{y}) \|_2 \le L_A\|\mathcal{A}(x) - \bar{y}\|_2. $$

Now, let $\bar{x}$ be the projection of $x$ onto $\mathcal{X}$.  Then, we have $\bar{y}=\mathcal{A}(\bar{x})$ and $\nabla f(\bar{x})=\bar{g}$ by Proposition~\ref{prop:opt-invariant} and $R(\bar{x})=\bz$.  This, together with the 1-Lipschitz continuity of $\mbox{prox}_P$ on $\mathcal{E}$, implies that
\begin{eqnarray*}
\|R(x)\|_2 &=& \|R(x) - R(\bar{x})\|_2 \\
&=& \| \left( \mbox{prox}_P(x-\nabla f(x))-x \right) - \left( \mbox{prox}_P(\bar{x} - \bar{g})-\bar{x} \right) \|_2 \\
&\leq&  \|x - \bar{x}\|_2 + \|\mbox{prox}_P(x-\nabla f(x)) - \mbox{prox}_P(\bar{x} - \bar{g})\|_2 \\
&\leq& \|x - \bar{x}\|_2 + \| (x-\bar{x}) - (\nabla f(x)-\bar{g})\|_2 \\
&\leq& 2\|x - \bar{x}\|_2 + L_A\|\mathcal{A}(x) - \bar{y}\|_2 \\
&\le& L_R \cdot d(x,\mathcal{X}),
\end{eqnarray*}
where $L_R=L_A\|\mathcal{A}\|+2>0$.  This completes the proof.
\end{proof}

\medskip
\noindent{\bf Proof of Theorem~\ref{thm:eb-upper-lip}}\,\, Suppose that the error bound~\eqref{eq:err-bd-vari} holds.  Then, there exist constants $\kappa_0,\rho>0$ such that
\begin{equation} \label{eq:eb-vari}
d(x,\mathcal{X}) \le \kappa_0\|R(x)\|_2 \quad\mbox{for all } x \in {\rm dom}(f) \mbox{ with } d(x,\mathcal{X}) \le \rho.
\end{equation}
By decreasing $\rho$ if necessary, we may assume that $\{x\in\mathcal{E} \mid d(x,\mathcal{X}) \le \rho\} \subseteq \mbox{dom}(f)$, so that Lemma~\ref{lem:residual-map} applies.  Let $x\in\mathcal{E}$ be such that $d(x,\mathcal{X})\le\rho$ and suppose that $-g\in\del P(x)$.  Using the definition of the proximity operator $\mbox{prox}_P$ in~\eqref{eq:prox-oper}, it is straightforward to verify that $x = \mbox{prox}_P(x-g)$.  This, together with~\eqref{eq:eb-vari} and the definition of the residual map $R$ in~\eqref{eq:residual-map-def}, leads to
\begin{eqnarray}
d(x,\mathcal{X}) &\le& \kappa_0 \|\mbox{prox}_P(x-\nabla f(x)) - x\|_2 \nonumber \\
&=& \kappa_0 \|\mbox{prox}_P(x - \nabla f(x)) - \mbox{prox}_P(x - g)\|_2 \nonumber \\
&\leq& \kappa_0 \|\nabla f(x) - g\|_2 \label{eq:eb-calm-1} \\
&\leq& \kappa_0 \left(\|\nabla f(x) - \bar{g}\|_2 + \|\bar{g} - g\|_2 \right) \nonumber \\
&\leq& \kappa_0 \left( L_A\|\mathcal{A}(x) - \bar{y}\|_2 + \|g-\bar{g}\|_2 \right) \label{eq:eb-calm-2} \\
&\leq& \kappa \left( \|\mathcal{A}(x)-\bar{y}\|_2 + \|g-\bar{g}\|_2 \right), \nonumber %\label{eq:eb-calm-3}
\end{eqnarray}
where~\eqref{eq:eb-calm-1} follows from the 1-Lipschitz continuity of $\mbox{prox}_P$ on $\mathcal{E}$,~\eqref{eq:eb-calm-2} follows from Lemma~\ref{lem:residual-map}, and $\kappa=\kappa_0 \cdot \max\{L_A,1\}>0$.  Since $-g\in\del P(x)$ is arbitrary, we conclude that the error bound~\eqref{eq:err-bd-alt-res} holds.

Conversely, suppose that the error bound~\eqref{eq:err-bd-alt-res} holds.  Then, there exist constants $\kappa_0,\rho_0>0$ such that 
\begin{equation} \label{eq:eb-alt-res}
d(x,\mathcal{X}) \leq \kappa_0\left( \|\mathcal{A}(x)-\bar{y}\|_2 + d(-\bar{g},\del P(x)) \right) \quad\mbox{for all } x \in \mathcal{E} \mbox{ with } d(x,\mathcal{X})\leq\rho_0.
\end{equation}
By Lemma~\ref{lem:residual-map}, there exists a constant $\rho_1>0$ such that $\left\{x\in\mathcal{E} \mid d(x,\mathcal{X}) \le \rho_1\right\} \subseteq \mbox{dom}(f)$.  Set $\rho=\min\{\rho_0,\rho_1\}/(L_R+1)>0$ and let $x\in\mathcal{E}$ be such that $d(x,\mathcal{X})\le\rho$, where $L_R>0$ is given in Lemma~\ref{lem:residual-map}.  Using the definition of the proximity operator $\mbox{prox}_P$ in~\eqref{eq:prox-oper} and the residual map $R$ in~\eqref{eq:residual-map-def}, we have $\mathbf{0} \in R(x) + \nabla f(x) + \partial P(x+R(x))$, or equivalently,
\begin{equation}\label{eq:opt-con-residual}
-(\nabla f(x) + R(x)) \in \partial P(x+R(x)). 
\end{equation}
In addition, the property of projection onto $\mathcal{X}$, the triangle inequality, and Lemma~\ref{lem:residual-map} imply
\begin{equation}\label{eq:step-inter-2-}
d(x+R(x),\mathcal{X}) \le d(x,\mathcal{X}) + \|R(x)\|_2 \le (L_R+1)\rho \le \rho_0.
\end{equation}
It follows from~\eqref{eq:eb-alt-res}--\eqref{eq:step-inter-2-} and Lemma~\ref{lem:residual-map} that
\begin{eqnarray*}
d(x+R(x),\mathcal{X}) &\le& \kappa_0\left( \|\mathcal{A}(x+R(x))-\bar{y}\|_2 + \|\nabla f(x)+R(x)-\bar{g}\|_2 \right) \\
&\le& \kappa_0 \left[ \| \mathcal{A}(x)-\bar{y} \|_2 + \|\nabla f(x) - \bar{g}\|_2 + (\|\mathcal{A}\|+1)\|R(x)\|_2 \right] \\
&\le& \kappa_0 \left[ (L_A+1) \| \mathcal{A}(x)-\bar{y} \|_2 + (\|\mathcal{A}\|+1)\|R(x)\|_2 \right].
\end{eqnarray*}
Hence, we obtain
$$ d(x,\mathcal{X}) \le d(x+R(x),\mathcal{X})+\|R(x)\|_2 \le \kappa_1\left( \|\mathcal{A}(x) - \bar{y}\|_2 + \|R(x)\|_2 \right), $$
where $\kappa_1=\max\{\kappa_0(L_A+1),\kappa_0(\|\mathcal{A}\|+1)+1\}>0$.  Since $(a+b)^2\leq 2(a^2+b^2)$ for any $a,b\in\R$, the above inequality yields
\begin{equation}\label{eq:square-key-step}
d(x,\mathcal{X})^2 \leq 2\kappa_1^2 \left( \|\mathcal{A}(x)-\bar{y}\|_2^2 + \|R(x)\|_2^2 \right) \quad\mbox{for all } x \in \mathcal{E} \mbox{ with } d(x,\mathcal{X}) \le \rho.
\end{equation}

Now, let $V=\{\mathcal{A}(x) \in \mathcal{T} \mid d(x,\mathcal{X})\leq\rho\} \subseteq \mbox{dom}(h)$.  Since $\mathcal{X}$ is compact, $V$ is also compact. By Assumption~\ref{ass:smooth-func}(a-ii), the function $h$ is strongly convex on $V$.  Hence, there exists a $\sigma>0$ such that for any $x\in\mathcal{E}$ satisfying $d(x,\mathcal{X})\leq\rho$, we have
\begin{equation}\label{eq:key-step-str-cvx}
\sigma\|\mathcal{A}(x) - \bar{y}\|_2^2 \leq \left\langle \nabla h(\mathcal{A}(x))-\nabla h(\bar{y}), \mathcal{A}(x) - \bar{y} \right\rangle = \langle \nabla f(x) - \bar{g}, x - \bar{x}\rangle,
\end{equation}
where $\bar{x}$ is the projection of $x$ onto $\mathcal{X}$.  Using the convexity of $P$ and the fact that $-\bar{g}\in\del P(\bar{x})$ and $-g(x)\in\del P(x+R(x))$, we have
$$
\langle \bar{g}-g(x), x+R(x)-\bar{x} \rangle \ge 0.
$$
It follows that 
\begin{eqnarray}
\langle \nabla f(x) - \bar{g}, x - \bar{x} \rangle + \|R(x)\|_2^2 &\le& \langle \bar{g} - \nabla f(x) + \bar{x} - x, R(x) \rangle \nonumber\\
&\le& \left(  \|\nabla f(x) - \nabla f(\bar{x})\|_2 + d(x,\mathcal{X}) \right) \|R(x)\|_2 \nonumber\\
&\le& (L_f+1)\cdot d(x,\mathcal{X}) \cdot \|R(x)\|_2 \label{eq:inner-prod-bd}
\end{eqnarray}
for some constant $L_f>0$, where we use the fact that $\nabla f$ is Lipschitz continuous on the compact set $\{x\in\mathcal{E} \mid d(x,\mathcal{X})\le\rho\}$ in the last inequality (see the discussion in Section~\ref{subsec:setup}).  Since $\|R(x)\|_2^2\ge0$ for all $x\in\mathcal{E}$, we conclude from~\eqref{eq:square-key-step}--\eqref{eq:inner-prod-bd} that
$$ d(x,\mathcal{X})^2 \leq \kappa_2 \left(  d(x,\mathcal{X})\cdot \|R(x)\|_2 + \|R(x)\|_2^2 \right), $$
where $\kappa_2 = 2\kappa_1^2\cdot\max\{(L_f+1)/\sigma,1\}$.  Solving the above quadratic inequality yields
$$ d(x,\mathcal{X}) \leq \kappa \|R(x)\|_2 \quad\mbox{for all } x \in \mathcal{E} \mbox{ with } d(x,\mathcal{X}) \le \rho, $$
where $\kappa=\left( \kappa_2+\sqrt{\kappa_2(\kappa_2+4)} \right)/2$.  This completes the proof. \endproof

\medskip
Upon combining Propositions~\ref{prop:equiv-variant}, \ref{prop:err-bd-calm} and Theorem~\ref{thm:eb-upper-lip}, we obtain the following sufficient condition for the error bound~\eqref{eq:err-bd} to hold:
\begin{coro} \label{cor:orig-eb}
Under the setting of Theorem~\ref{thm:eb-upper-lip}, the error bound~\eqref{eq:err-bd} holds if the solution map $\Gamma:\mathcal{T}\times\mathcal{E}\rightrightarrows\mathcal{E}$ is calm at $(\bar{y},\bar{g})$ for any $\bar{x}\in\Gamma(\bar{y},\bar{g})$.
\end{coro}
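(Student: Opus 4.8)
The plan is to obtain the corollary by chaining together the three preceding results, each of which connects two of the four error-bound formulations in play. First I would invoke Proposition~\ref{prop:err-bd-calm}: it states that the error bound~\eqref{eq:err-bd-alt-res} (the (EBR) formulation with residual $r_{\rm alt}$) holds \emph{if and only if} the solution map $\Gamma$ is calm at $(\bar{y},\bar{g})$ for every $\bar{x}\in\Gamma(\bar{y},\bar{g})$. Thus the calmness hypothesis immediately delivers~\eqref{eq:err-bd-alt-res}. Next I would apply Theorem~\ref{thm:eb-upper-lip}, which asserts that~\eqref{eq:err-bd-alt-res} is equivalent to the neighborhood-based bound~\eqref{eq:err-bd-vari} (the (EBN) formulation with residual $\|R(\cdot)\|_2$); having just established~\eqref{eq:err-bd-alt-res}, this yields~\eqref{eq:err-bd-vari}. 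Finally I would appeal to Proposition~\ref{prop:equiv-variant}, according to which~\eqref{eq:err-bd-vari} implies the target bound~\eqref{eq:err-bd} (the (EBP) formulation, whose test set is the sublevel set $\{x\in\mathcal{E} \mid F(x)\le\zeta,\ \|R(x)\|_2\le\epsilon\}$).

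Composing the three implications gives
$$
\Gamma \text{ calm at } (\bar{y},\bar{g}) \;\Longrightarrow\; \eqref{eq:err-bd-alt-res} \;\Longrightarrow\; \eqref{eq:err-bd-vari} \;\Longrightarrow\; \eqref{eq:err-bd},
$$
which is precisely the assertion of the corollary. All four statements refer to the same $\bar{y}\in\mathcal{T}$ and $\bar{g}\in\mathcal{E}$ supplied by Proposition~\ref{prop:opt-invariant}, so no compatibility issue arises when passing from one formulation to the next.

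There is no genuine obstacle at the level of the corollary itself, as the entire analytical effort has already been expended in the three cited results. If forced to identify where the real content lies, it is the nontrivial direction of Theorem~\ref{thm:eb-upper-lip}---recovering (EBN) from (EBR)---which relied on the quantitative estimates of Lemma~\ref{lem:residual-map}, a strong-convexity argument to control $\|\mathcal{A}(x)-\bar{y}\|_2$ via the inner product $\langle \nabla f(x)-\bar{g},\,x-\bar{x}\rangle$, and the final solution of a quadratic inequality in $d(x,\mathcal{X})$. Here I would simply cite those results; the proof of the corollary is a one-line composition of the established equivalences and implication.
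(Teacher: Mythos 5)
Your proposal is correct and is exactly the paper's proof: the corollary is stated immediately after the sentence ``Upon combining Propositions~\ref{prop:equiv-variant}, \ref{prop:err-bd-calm} and Theorem~\ref{thm:eb-upper-lip}, we obtain the following sufficient condition,'' which is precisely your chain of implications calmness $\Rightarrow$ (EBR) $\Rightarrow$ (EBN) $\Rightarrow$ (EBP). Your closing remark correctly locates the real analytical content in the cited results (chiefly the (EBR) $\Rightarrow$ (EBN) direction of Theorem~\ref{thm:eb-upper-lip}), so nothing further is needed.
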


\subsection{Verifying the Calmness of the Solution Map $\Gamma$}
Corollary~\ref{cor:orig-eb} reduces the problem of establishing the error bound~\eqref{eq:err-bd} for those instances of Problem~\eqref{eq:str-cvx-prob} that satisfy Assumptions~\ref{ass:smooth-func} and~\ref{ass:level-bd} to that of checking certain calmness property of the solution map $\Gamma$.  The upshot of this reduction is that the latter problem can be tackled using a wide array of tools in set-valued analysis.  As an illustration, let us develop a simple sufficient condition for the calmness property stated in Corollary~\ref{cor:orig-eb} to hold.

To motivate our approach, observe that the solution map $\Gamma$ has a separable structure.  Specifically, we have
$$ \Gamma(y,g) = \left\{ x\in\mathcal{E} \mid \mathcal{A}(x)=y, \, -g\in\del P(x) \right\} = \Gamma_f(y) \cap \Gamma_P(g), $$
where $\Gamma_f:\mathcal{T}\rightrightarrows\mathcal{E}$ and $\Gamma_P:\mathcal{E}\rightrightarrows\mathcal{E}$ are multi-functions defined by
\begin{equation} \label{eq:sol-map-split}
\Gamma_f(y) := \left\{ x\in\mathcal{E} \mid \mathcal{A}(x)=y \right\}, \quad \Gamma_P(g) := \left\{ x\in\mathcal{E} \mid -g\in\del P(x) \right\}.
\end{equation}
Intuitively, if $x\in\mathcal{E}\setminus\mathcal{X}$ is close to both $\Gamma_f(\bar{y})$ and $\Gamma_P(\bar{g})$, then it should be close to $\Gamma_f(\bar{y})\cap \Gamma_P(\bar{g})=\mathcal{X}$.  This suggests that it may be possible to estimate $d(x,\mathcal{X})$ by separately estimating $d(x,\Gamma_f(\bar{y}))$ and $d(x,\Gamma_P(\bar{g}))$.  Such idea can be formalized using the notion of \emph{bounded linear regularity} of a collection of closed convex sets.  We begin with the definition.
\begin{defi}\label{def:linear-reg}
(see, \eg,~\cite[Definition 5.6]{bauschke1996projection})
Let $C_1,\ldots,C_N$ be closed convex subsets of $\mathcal{E}$ with a non-empty intersection $C$.  We say that the collection $\{C_1,\ldots,C_N\}$ is \emph{boundedly linearly regular} if for every bounded subset $B$ of $\mathcal{E}$, there exists a constant $\kappa>0$ such that
$$ d(x,C) \leq \kappa \cdot \max_{i=1,\ldots,N} d(x,C_i) \quad\mbox{for all } x\in B. $$
\end{defi}
Naturally, we are interested in the collection $\mathcal{C}=\{\Gamma_f(\bar{y}),\Gamma_P(\bar{g})\}$.  It is obvious that both $\Gamma_f(\bar{y})$ and $\Gamma_P(\bar{g})$ are convex, and that the former is closed.  Using the fact that $P$ is a closed proper convex function (Assumption~\ref{ass:smooth-func}(b)) and~\cite[Theorem 24.4]{rockafellar1970convex}, we see that $\Gamma_P(\bar{g})$ is closed as well.  In addition, we have $\Gamma_f(\bar{y})\cap \Gamma_P(\bar{g})=\mathcal{X}$, which is non-empty by Assumption~\ref{ass:level-bd}.  Thus, the collection $\mathcal{C}$ satisfies the hypothesis of Definition~\ref{def:linear-reg}.  The following result highlights the relevance of the notion of bounded linear regularity in establishing the calmness property stated in Corollary~\ref{cor:orig-eb}.
\begin{thm}\label{thm:two-conditions}
Suppose that Problem~\eqref{eq:str-cvx-prob} satisfies Assumptions~\ref{ass:smooth-func} and~\ref{ass:level-bd}.  Let $\bar{y}\in\mathcal{T}$ and $\bar{g}\in\mathcal{E}$ be as in Proposition~\ref{prop:opt-invariant}.  Consider the collection $\mathcal{C}=\{\Gamma_f(\bar{y}),\Gamma_P(\bar{g})\}$, where the multi-functions $\Gamma_f:\mathcal{T}\rightrightarrows\mathcal{E}$ and $\Gamma_P:\mathcal{E}\rightrightarrows\mathcal{E}$ are defined in~\eqref{eq:sol-map-split}.  Suppose that the following two conditions hold:

\bigskip
\noindent (C1). The collection $\mathcal{C}$ is boundedly linearly regular. 

\medskip
\noindent (C2). For any $\bar{x}\in\mathcal{X}$, the subdifferential $\partial P:\mathcal{E}\rightrightarrows\mathcal{E}$ is metrically sub-regular at $\bar{x}$ for $-\bar{g}$.

\bigskip
\noindent Then, the solution map $\Gamma:\mathcal{T}\times\mathcal{E}\rightrightarrows\mathcal{E}$ is calm at $(\bar{y},\bar{g})$ for any $\bar{x}\in\Gamma(\bar{y},\bar{g})$.
\end{thm}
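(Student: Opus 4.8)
The plan is to establish the equivalent \emph{uniform} calmness inclusion of Proposition~\ref{prop:ulc-calm}(b) for the solution map $\Gamma$ at $\bar{u}=(\bar{y},\bar{g})$ over the compact set $\bar{V}=\mathcal{X}=\Gamma(\bar{y},\bar{g})$; since statements (a) and (b) of that proposition are equivalent, this will immediately yield calmness of $\Gamma$ at $(\bar{y},\bar{g})$ for every $\bar{x}\in\mathcal{X}$. Concretely, I will produce constants $\kappa,\epsilon>0$ such that every $x\in\Gamma(y,g)\cap(\mathcal{X}+\epsilon\mathbb{B}_{\mathcal{E}})$ satisfies $d(x,\mathcal{X})\le\kappa\|(y,g)-(\bar{y},\bar{g})\|_2$. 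Fix such an $x$; then $\mathcal{A}(x)=y$, $-g\in\partial P(x)$, and $d(x,\mathcal{X})\le\epsilon$. The separable structure $\mathcal{X}=\Gamma_f(\bar{y})\cap\Gamma_P(\bar{g})$ together with condition (C1) will let me dominate $d(x,\mathcal{X})$ by the larger of $d(x,\Gamma_f(\bar{y}))$ and $d(x,\Gamma_P(\bar{g}))$, so it suffices to control each of these two distances separately by the corresponding component $\|y-\bar{y}\|_2$ or $\|g-\bar{g}\|_2$.

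For the first distance, $\Gamma_f(\bar{y})=\{z\in\mathcal{E}:\mathcal{A}(z)=\bar{y}\}$ is an affine subspace, and since $y-\bar{y}=\mathcal{A}(x)-\mathcal{A}(\bar{x})\in\mathrm{range}(\mathcal{A})$ for any $\bar{x}\in\mathcal{X}$, a standard linear-system estimate via the Moore--Penrose pseudoinverse of $\mathcal{A}$ gives $d(x,\Gamma_f(\bar{y}))\le c_1\|y-\bar{y}\|_2$ for a constant $c_1$ depending only on $\mathcal{A}$. For the second distance, I first invoke Fact~\ref{fact:calm-metricsub} to reinterpret condition (C2): the metric sub-regularity of $\partial P$ at each $\bar{x}\in\mathcal{X}$ for $-\bar{g}$ is equivalent to the calmness of $(\partial P)^{-1}$ at $-\bar{g}$ for that $\bar{x}$. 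Because $\mathcal{X}\subseteq(\partial P)^{-1}(-\bar{g})=\Gamma_P(\bar{g})$ is compact, I can apply Proposition~\ref{prop:ulc-calm} to the map $(\partial P)^{-1}$ at $-\bar{g}$ over the compact set $\mathcal{X}$, obtaining constants $\kappa_2,\epsilon_2>0$ with
$$ (\partial P)^{-1}(-g)\cap(\mathcal{X}+\epsilon_2\mathbb{B}_{\mathcal{E}})\subseteq\Gamma_P(\bar{g})+\kappa_2\|g-\bar{g}\|_2\,\mathbb{B}_{\mathcal{E}}\qquad\text{for all }g\in\mathcal{E}. $$
Since $-g\in\partial P(x)$ means $x\in(\partial P)^{-1}(-g)$, choosing $\epsilon\le\epsilon_2$ forces $x\in\mathcal{X}+\epsilon_2\mathbb{B}_{\mathcal{E}}$ and hence $d(x,\Gamma_P(\bar{g}))\le\kappa_2\|g-\bar{g}\|_2$.

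Finally I will combine the pieces. Setting $\epsilon=\epsilon_2$ and applying the bounded linear regularity of $\mathcal{C}$ (condition (C1)) on the fixed bounded set $B=\mathcal{X}+\epsilon_2\mathbb{B}_{\mathcal{E}}$ yields a constant $\kappa_1$ with $d(x,\mathcal{X})\le\kappa_1\max\{d(x,\Gamma_f(\bar{y})),d(x,\Gamma_P(\bar{g}))\}$; chaining the two bounds above gives $d(x,\mathcal{X})\le\kappa_1\max\{c_1,\kappa_2\}\,\|(y,g)-(\bar{y},\bar{g})\|_2$, which is exactly the desired uniform inclusion. The routine ingredients are the affine error bound and the direct invocation of (C1). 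The part demanding the most care is the double passage from \emph{pointwise} to \emph{uniform} statements---using Fact~\ref{fact:calm-metricsub} to turn (C2) into calmness of $(\partial P)^{-1}$ and then Proposition~\ref{prop:ulc-calm} to make the calmness radius uniform over all of $\mathcal{X}$---together with the bookkeeping needed to ensure that a single radius $\epsilon$ simultaneously validates the regularity estimate, the uniform calmness of $(\partial P)^{-1}$, and membership of $x$ in the bounded set $B$ on which (C1) is applied.
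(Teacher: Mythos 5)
Your proposal is correct and follows essentially the same route as the paper's proof: both convert (C2) into calmness of $(\partial P)^{-1}$ via Fact~\ref{fact:calm-metricsub}, uniformize it over the compact set $\mathcal{X}$ via Proposition~\ref{prop:ulc-calm}, bound $d(x,\Gamma_f(\bar{y}))$ by a linear-system estimate, and combine the two distance bounds through the bounded linear regularity (C1) on the bounded set $\mathcal{X}+\epsilon\mathbb{B}_{\mathcal{E}}$. The only cosmetic difference is that you use the Moore--Penrose pseudoinverse (with the observation $y-\bar{y}\in\mathrm{range}(\mathcal{A})$) where the paper invokes the Hoffman bound; for a pure linear equality system these are the same estimate.
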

\begin{proof}
Condition (C2) and Fact~\ref{fact:calm-metricsub} imply that $(\del P)^{-1}:\mathcal{E}\rightrightarrows\mathcal{E}$ is calm at $-\bar{g}$ for any $\bar{x}\in\mathcal{X}$.  Since $\mathcal{X}$ is compact by Assumption~\ref{ass:level-bd}, Proposition~\ref{prop:ulc-calm} implies the existence of constants $\kappa_0,\epsilon>0$ such that 
$$ (\del P)^{-1}(-g) \cap (\mathcal{X}+\epsilon\mathbb{B}_{\mathcal{E}}) \subseteq (\del P)^{-1}(-\bar{g}) + \kappa_0\|g-\bar{g}\|_2\mathbb{B}_{\mathcal{E}} \quad\mbox{for all } g \in \mathcal{E}. $$
It is clear that $\Gamma(y,g) \subseteq \Gamma_P(g) = (\del P)^{-1}(-g)$ for any $(y,g)\in\mathcal{T}\times\mathcal{E}$.  Thus, the above inclusion leads to
$$ \Gamma(y,g) \cap (\mathcal{X}+\epsilon\mathbb{B}_{\mathcal{E}}) \subseteq \Gamma_P(\bar{g}) + \kappa_0\|g-\bar{g}\|_2\mathbb{B}_{\mathcal{E}} \quad\mbox{for all } (y,g) \in \mathcal{T}\times\mathcal{E}. $$
In particular, for any $(y,g) \in \mathcal{T}\times\mathcal{E}$ such that $\Gamma(y,g) \cap (\mathcal{X}+\epsilon\mathbb{B}_{\mathcal{E}})\not=\emptyset$, we have
\begin{equation}\label{eq:dist-x-CP}
d(x,\Gamma_P(\bar{g})) \leq \kappa_0 \|g - \bar{g}\|_2 \quad\mbox{for all } x\in\Gamma(y,g) \cap (\mathcal{X}+\epsilon\mathbb{B}_{\mathcal{E}}).
\end{equation}
On the other hand, observe that $\Gamma_f(\bar{y})$ is the set of solutions to a linear system.  Thus, by the Hoffman bound~\cite{H52}, there exists a constant $\kappa_1>0$ such that
\begin{equation}\label{eq:dist-x-Cf}
d(x,\Gamma_f(\bar{y})) \leq \kappa_1 \|\mathcal{A}(x) - \bar{y}\|_2 \quad\mbox{for all } x\in\mathcal{E}.
\end{equation}
Now, since $\mathcal{X}+\epsilon\mathbb{B}_{\mathcal{E}}$ is bounded, by condition (C1), there exists a constant $\kappa_2>0$ such that
\begin{equation}\label{eq:dist-x-opt}
d(x,\Gamma(\bar{y},\bar{g})) \leq \kappa_2 \cdot \max\left\{ d(x,\Gamma_f(\bar{y})), d(x,\Gamma_P(\bar{g})) \right\} \quad\mbox{for all } x \in \mathcal{X}+\epsilon\mathbb{B}_{\mathcal{E}}.
\end{equation}
It follows from~\eqref{eq:dist-x-CP}--\eqref{eq:dist-x-opt} that for any $(y,g) \in \mathcal{T}\times\mathcal{E}$ satisfying $\Gamma(y,g) \cap (\mathcal{X}+\epsilon\mathbb{B}_{\mathcal{E}})\not=\emptyset$, we have
\begin{eqnarray*}
d(x,\Gamma(\bar{y},\bar{g})) &\le& \kappa_2 \cdot \max\left\{ \kappa_1\|\mathcal{A}(x)-\bar{y}\|_2, \kappa_0\|g-\bar{g}\|_2 \right\} \\
&\le& \kappa\cdot\|(y,g)-(\bar{y},\bar{g})\|_2 \quad\mbox{for all }x\in\Gamma(y,g) \cap (\mathcal{X}+\epsilon\mathbb{B}_{\mathcal{E}}),
\end{eqnarray*}
where $\kappa=\kappa_2\cdot\max\{\kappa_0,\kappa_1\}$.  This implies that $\Gamma$ is calm at $(\bar{y},\bar{g})$ for any $\bar{x}\in\Gamma(\bar{y},\bar{g})$, as desired.
\end{proof}

\medskip
As seen from Theorem~\ref{thm:two-conditions}, the bounded linear regularity of the collection $\mathcal{C}$ can potentially simplify the task of verifying the calmness property stated in Corollary~\ref{cor:orig-eb} and hence of establishing the error bound~\eqref{eq:err-bd}.  Thus, it is natural to ask when the collection $\mathcal{C}$ is boundedly linearly regular.  The following fact provides a simple answer.
\begin{fact}\label{fact:linear-regular}
(\cite[Corollary 3]{bauschke1999strong})
Let $C_1,\ldots, C_N$ be closed convex subsets of $\mathcal{E}$, where $C_{r+1},\ldots,C_N$ are polyhedral for some $r\in\{0,1,\ldots,N\}$.  Suppose that 
$$ \bigcap_{i=1}^r {\rm ri}(C_i) \cap \bigcap_{i=r+1}^N C_i \neq\emptyset. $$
Then, the collection $\{C_1,\ldots,C_N\}$ is boundedly linearly regular.
\end{fact}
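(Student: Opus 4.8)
The overall strategy is to derive the global bounded-set inequality from a \emph{local} regularity statement at each point of the intersection $C:=\bigcap_{i=1}^N C_i$, and then to establish that local statement by handling the polyhedral sets $C_{r+1},\dots,C_N$ and the non-polyhedral sets $C_1,\dots,C_r$ by different means, gluing the two blocks through the common point supplied by the hypothesis.

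First I would reduce to a local statement. Fix a bounded set $B$; enlarging it we may take $B$ closed, so that $B$ and $C\cap B$ are compact. For those $x\in B$ with $\max_i d(x,C_i)\ge\eta$ the desired inequality holds trivially with any $\kappa\ge(\sup_{x\in B}d(x,C))/\eta$, since $d(\cdot,C)$ is bounded on $B$. Hence only the $x\in B$ with $\max_i d(x,C_i)$ small matter, and a routine compactness argument shows these accumulate on $C\cap B$: if $x_k\in B$ with $\max_i d(x_k,C_i)\to0$, then any cluster point $\bar x$ obeys $d(\bar x,C_i)=0$ for every $i$, whence $\bar x\in C$ by closedness. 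It therefore suffices to prove \emph{local linear regularity}, namely that for each $\bar x\in C$ there are $\kappa_{\bar x},\delta_{\bar x}>0$ with $d(x,C)\le\kappa_{\bar x}\max_i d(x,C_i)$ on $\mathbb{B}_{\mathcal{E}}(\bar x,\delta_{\bar x})$. Covering the compact set $C\cap B$ by finitely many such balls and taking the largest constant then yields bounded linear regularity on $B$.

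For the local statement at a fixed $\bar x\in C$ I would split the family. The polyhedral block is free: since $\bigcap_{i>r}C_i$ is itself cut out by a single finite system of affine inequalities, Hoffman's bound~\cite{H52} supplies a constant $\kappa_P>0$, valid on all of $\mathcal{E}$ and \emph{with no qualification}, such that $d\big(x,\bigcap_{i>r}C_i\big)\le\kappa_P\max_{i>r}d(x,C_i)$. For the non-polyhedral block I would use the hypothesis: writing $D:=\bigcap_{i=1}^r C_i$ and $Q:=\bigcap_{i>r}C_i$, the assumption provides $x_0\in\bigcap_{i=1}^r{\rm ri}(C_i)\cap Q$, and since $\bigcap_{i\le r}{\rm ri}(C_i)={\rm ri}(D)$ whenever this set is non-empty~\cite[Theorem 6.5]{rockafellar1970convex}, we have $x_0\in{\rm ri}(D)\cap Q$. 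Thus the problem reduces to the two-set case $\{D,Q\}$ with $D$ closed convex, $Q$ polyhedral, and ${\rm ri}(D)\cap Q\neq\emptyset$, after which the constants compose with $\kappa_P$ and with the base-case constant for the family $\{C_1,\dots,C_r\}$ (itself handled, when $r\ge2$, by the all-relative-interior instance of this very reduction).

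The heart of the matter, and the step I expect to be the main obstacle, is this two-set gluing: showing that near $\bar x\in D\cap Q$ one has $d(x,D\cap Q)\le\kappa\big(d(x,D)+d(x,Q)\big)$. I would attack it by localizing the polyhedron---there is a neighborhood $U$ of $\bar x$ on which $Q$ coincides with the translate $\bar x+T_Q(\bar x)$ of a polyhedral cone, so $d(x,Q)=d(x-\bar x,T_Q(\bar x))$ for $x\in U$---and by using $x_0$ as a feasibility direction: because $x_0\in{\rm ri}(D)$ there is a radius $\varrho>0$ with $(x_0+\varrho\,\mathbb{B}_{\mathcal{E}})\cap{\rm aff}(D)\subseteq D$, which lets one perturb a near-point of $D$ toward $x_0$ to restore membership in $D$ at a controlled cost while polyhedrality keeps the perturbed point inside $Q$. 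Equivalently, one may verify that ${\rm ri}(D)\cap Q\neq\emptyset$ is exactly the constraint qualification under which the normal-cone rule $N_{D\cap Q}(\bar x)=N_D(\bar x)+N_Q(\bar x)$ holds with a quantitatively controlled decomposition, and then convert this strong conical hull intersection property into the local error bound. The delicate point is that the relative-interior witness $x_0$ need not coincide with the reference point $\bar x$, and that the polyhedral sets are assumed only to \emph{contain} $x_0$ rather than to contain it in their relative interiors; it is precisely the finite affine description of a polyhedron---through its tangent-cone localization and Hoffman's bound---that bridges this gap and lets mere membership replace the relative-interior requirement.
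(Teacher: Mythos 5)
You should first be aware that the paper does not prove this statement at all: it is labeled a \emph{Fact} and quoted directly from \cite[Corollary 3]{bauschke1999strong}, so there is no internal proof to compare against; your argument has to stand on its own. Its skeleton is essentially the right one (and close to how such results are proved in the literature): the compactness reduction from bounded linear regularity to local linear regularity on $C\cap B$ is correct; Hoffman's bound does give a global, qualification-free constant for the polyhedral block; \cite[Theorem 6.5]{rockafellar1970convex} correctly collapses $\bigcap_{i\le r}{\rm ri}(C_i)$ into ${\rm ri}(D)$; and the convex--polyhedral gluing via the absorption property $(x_0+\varrho\,\mathbb{B}_{\mathcal{E}})\cap{\rm aff}(D)\subseteq D$ is the right engine. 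One inaccuracy in that gluing sketch: it is convexity, not polyhedrality, that keeps the perturbed point $(1-t)z+tx_0$ inside $Q$ (both endpoints lie in $Q$). Polyhedrality is needed elsewhere: the point $z$ you perturb must already lie in ${\rm aff}(D)$, or else the segment toward $x_0$ never enters $D$; one manufactures such a $z$ near $x$ by applying Hoffman to the \emph{polyhedral pair} $\{Q,{\rm aff}(D)\}$, namely $d(x,Q\cap{\rm aff}(D))\le\kappa_1\max\{d(x,Q),d(x,{\rm aff}(D))\}$ together with $d(x,{\rm aff}(D))\le d(x,D)$. Your closing sentence gestures at this, but the tangent-cone localization you invoke is not the mechanism that bridges the gap.

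The genuine gap is the purely convex block. Your composition needs $d(x,D)\le\kappa\max_{i\le r}d(x,C_i)$, and you dispose of this "base case" by appealing to "the all-relative-interior instance of this very reduction." That is circular: in the all-relative-interior instance there are no polyhedral sets, so your reduction yields $Q=\mathcal{E}$, a vacuous two-set gluing, and then again demands the base-case constant for the same family $\{C_1,\dots,C_r\}$. The recursion never bottoms out, and nothing in the proposal proves that finitely many closed convex sets with a common relative-interior point are boundedly linearly regular; a peel-off induction would not help either, since it requires a two-set lemma with ${\rm ri}(C_1)\cap{\rm ri}(E)\neq\emptyset$ where $E=\bigcap_{i\ge 2}C_i$ is not polyhedral, which you never state or prove. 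This piece needs its own argument, for example: pass to $A:=\bigcap_{i\le r}{\rm aff}(C_i)$ (Hoffman controls $d(x,A)$, and your convex--polyhedral lemma with $Q=A$ controls each $d(x,C_i\cap A)$); note that ${\rm ri}(C_i)\cap A$ is relatively open in $A$, so $x_0$ is a common \emph{interior} point of the sets $C_i\cap A$ relative to $A$, with a uniform radius $\varrho>0$; then for $x\in A$ the single point $(1-t)x+tx_0$ with $t=\alpha/(\alpha+\varrho)$ and $\alpha:=\max_{i\le r}d(x,C_i\cap A)$ lies in all of these sets simultaneously and is at distance $O(\alpha)$ from $x$ on bounded sets; finally split off the component of $x$ orthogonal to $A$, which enters all distances identically. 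With that lemma supplied, your composition of constants closes and the proof becomes a complete, self-contained alternative to citing \cite{bauschke1999strong}.
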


Although Fact~\ref{fact:linear-regular} only gives a sufficient condition for the collection $\mathcal{C}$ to be boundedly linearly regular, it is already very useful for studying the error bound property~\eqref{eq:err-bd} associated with Problem~\eqref{eq:str-cvx-prob}.  This will be elaborated in the next section.

%%%%%%%%%%%%%%%%%%%%%%%%%%%%
\begin{comment}
%Note that since the optimal solution set $\mathcal{X}$ can be unbounded, the set $\{ x\in\mathcal{E} \mid d(x,\mathcal{X})\}$ is not necessarily a compact subset of $\mathcal{E}$. However, Proposition \ref{prop:residual-map} implies that after applying a linear mapping $\mathcal{A}$, the image becomes a compact subset of $\mathcal{T}$. 

\subsection{Background in Set-Valued Analysis}
%\medskip
%If the optimal solution set $\mathcal{X}$ is additionally assumed to be bounded
%If the function $P(x)$ of \eqref{eq:str-cvx-prob} is assumed to be a norm function, we can refine the results.
%\begin{ass}\label{ass:P-norm}
%The function $P(x)$ of \eqref{eq:str-cvx-prob} is a norm function, \ie, for all $a\in\mathbb{R}$ and all $u,v\in\mathcal{E}$, $P:\mathcal{E}\rightarrow\mathbb{R}$ satisfies
%\begin{itemize}
%\item[(a)] $P(au) = |a|P(u)$;
%\item[(b)] $P(u+v) \leq P(u) + P(v)$;
%\item[(c)] $P(u) = 0$ if and only if $u = \mathbf{0}$.
%\end{itemize}
%\end{ass}
%\medskip
%\begin{prop}\label{eq:bound-opt-set}
%If function $f$ of \eqref{eq:str-cvx-prob} is of form \eqref{eq:form-of-f} and Assumptions \ref{ass:smooth-func}, \ref{ass:optimal-set} and \ref{ass:P-norm} are satisfied, the optimal solution set $\mathcal{X}$ is a compact convex subset of $\mathcal{E}$. 
%\end{prop}
%\begin{proof}
%By Proposition \ref{prop:opt-invariant}, there exists $\bar{y}\in\mathcal{T}$ such that for all $x\in\mathcal{X}$, $\mathcal{A}(x) = \bar{y}$. Let $v^{*}$ be the optimal value of \eqref{eq:str-cvx-prob}. Since $f$ if of form \eqref{eq:form-of-f}, we can represent the optimal solution set as
%$$ \mathcal{X} = \left\{ x\in\mathcal{E}\mid \mathcal{A}(x) = \bar{y}, P(x)  = v^{*} - h(\bar{y})\right\}. $$
%As $P(x)$ is a norm function by assumption, $\mathcal{X}$ is bounded as well as closed. In addition, $\mathcal{X}$ must be convex since both $f$ and $P$ are convex functions. Hence, $\mathcal{X}$ is a compact convex subset of $\mathcal{E}$. 
%\end{proof}
\end{comment}
%%%%%%%%%%%%%%%%%%%%%%%%%%%%

\section{Applications to Structured Convex Optimization} \label{sec:sco}
So far our investigation has focused on deriving conditions that can imply the error bound~\eqref{eq:err-bd} for the structured convex optimization problem~\eqref{eq:str-cvx-prob}.  However, we have yet to exhibit instances of Problem~\eqref{eq:str-cvx-prob} that would satisfy those conditions.  As it turns out, such instances abound in applications.  In this section, we will consider four classes of instances of Problem~\eqref{eq:str-cvx-prob} and show that they all possess the calmness property stated in Corollary~\ref{cor:orig-eb}.  Consequently, they all have the error bound property~\eqref{eq:err-bd}.  Although previous works have already established the error bound property for three of the four classes of instances mentioned above, we shall see that our approach provides a unified and more transparent treatment of the existing results.  More interestingly, our approach allows us to resolve the validity of the error bound~\eqref{eq:err-bd} for the fourth class of instances, which comprises of structured convex optimization problems with nuclear norm regularization.  This answers an open question raised by Tseng~\cite{tseng2010approximation}.

For notational simplicity, in what follows, we shall refer to $f$ as the \emph{loss function} and $P$ as the \emph{regularizer}.  Moreover, unless otherwise stated, Assumptions~\ref{ass:smooth-func} and~\ref{ass:level-bd} will be in force.

\subsection{Strongly Convex Loss Function} \label{subsec:str-cvx}
As a warm-up, suppose that the linear operator $\mathcal{A}$ in Assumption~\ref{ass:smooth-func} is the identity; \ie, $\mathcal{E}=\mathcal{T}$ and $\mathcal{A}(x)=x$ for all $x\in\mathcal{E}$.  This gives rise to instances of Problem~\eqref{eq:str-cvx-prob} in which the loss function $f$ is strongly convex and has a Lipschitz continuous gradient on any compact convex set $V\subseteq\mbox{dom}(f)$.  Conversely, any such loss function can be put into the form~\eqref{eq:form-of-f} by letting $\mathcal{A}$ to be the identity map, $h=f$, and $c=\bz$. It is well known that in this case the error bound~\eqref{eq:err-bd} holds whenever the optimal solution set $\mathcal{X}$ is non-empty; see, \eg,~\cite[Theorem 3.1]{pang1987posteriori}. To recover this result using the machinery developed in Section~\ref{sec:charac-eb}, we first observe that the solution map $\Gamma$ is given by
\begin{equation} \label{eq:str-cvx-sol-map}
\Gamma(y,g) = \left\{
\begin{array}{c@{\quad}l}
\{y\} & \mbox{if $-g \in \del P(y)$}, \\
\emptyset & \mbox{otherwise}.
\end{array}
\right.
\end{equation}
Now, note that $\mathcal{X}$ is either empty or a singleton.  Thus, the non-emptiness of $\mathcal{X}$ is equivalent to Assumption~\ref{ass:level-bd}.  In particular, we have $\mathcal{X}=\Gamma(\bar{y},\bar{g})=\{\bar{x}\}$, where $\bar{x}=\bar{y}\in\mathcal{E}$ and $\bar{g}=\nabla h(\bar{x})+c \in \mathcal{E}$ with $-\bar{g}\in\del P(\bar{x})$.  This, together with~\eqref{eq:str-cvx-sol-map}, implies that
$$ \Gamma(y,g) \subseteq \Gamma(\bar{y},\bar{g}) + \|y-\bar{y}\|_2\mathbb{B}_{\mathcal{E}} \subseteq \Gamma(\bar{y},\bar{g}) + \|(y,g)-(\bar{y},\bar{g})\|_2\mathbb{B}_{\mathcal{E}} \quad\mbox{for all } (y,g) \in \mathcal{T}\times\mathcal{E}, $$
which in turn implies that $\Gamma$ is calm at $(\bar{y},\bar{g})$ for $\bar{x}\in\Gamma(\bar{y},\bar{g})$.  The desired conclusion then follows from Corollary~\ref{cor:orig-eb}.
\begin{prop}
Suppose that Problem~\eqref{eq:str-cvx-prob} satisfies Assumptions~\ref{ass:smooth-func} and~\ref{ass:level-bd}.  Suppose further that $\mathcal{A}$ is the identity map, so that the loss function $f$ is strongly convex and has a Lipschitz continuous gradient on any compact convex set $V\subseteq\mbox{dom}(f)$. Then, the error bound~\eqref{eq:err-bd} holds.
\end{prop}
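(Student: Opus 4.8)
The plan is to invoke Corollary~\ref{cor:orig-eb}, which reduces the validity of~\eqref{eq:err-bd} to verifying that the solution map $\Gamma$ is calm at $(\bar{y},\bar{g})$ for every $\bar{x}\in\Gamma(\bar{y},\bar{g})$. In the present setting, where $\mathcal{A}$ is the identity, I expect this calmness to follow almost immediately, so that neither the bounded linear regularity condition (C1) nor the metric subregularity condition (C2) of Theorem~\ref{thm:two-conditions} needs to be invoked.

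First I would pin down the explicit form of $\Gamma$. Because $\mathcal{A}(x)=x$, the constraint $\mathcal{A}(x)=y$ forces $x=y$, so $\Gamma(y,g)$ collapses to the singleton $\{y\}$ when $-g\in\del P(y)$ and is empty otherwise; this is exactly~\eqref{eq:str-cvx-sol-map}. Consequently $\mathcal{X}=\Gamma(\bar{y},\bar{g})$ is either empty or the single point $\bar{x}=\bar{y}$, and Assumption~\ref{ass:level-bd} guarantees non-emptiness. Proposition~\ref{prop:opt-invariant} then supplies $\bar{g}=\nabla h(\bar{x})+c$ together with $-\bar{g}\in\del P(\bar{x})$.

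Next I would read off the calmness estimate directly from this singleton structure. For any $(y,g)$ with $\Gamma(y,g)\neq\emptyset$, its unique element $y$ obeys $\|y-\bar{x}\|_2=\|y-\bar{y}\|_2\le\|(y,g)-(\bar{y},\bar{g})\|_2$, whence
$$ \Gamma(y,g)\subseteq\Gamma(\bar{y},\bar{g})+\|(y,g)-(\bar{y},\bar{g})\|_2\,\mathbb{B}_{\mathcal{E}} \quad\mbox{for all } (y,g)\in\mathcal{T}\times\mathcal{E}. $$
This is precisely the calmness inequality~\eqref{eq:calmness} with $\kappa=1$ and $\epsilon$ arbitrary, so $\Gamma$ is calm at $(\bar{y},\bar{g})$ for the (unique) point $\bar{x}\in\Gamma(\bar{y},\bar{g})$. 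Invoking Corollary~\ref{cor:orig-eb} would then deliver the error bound~\eqref{eq:err-bd}.

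As for the main obstacle: there is essentially none, which is exactly why this case serves as a warm-up. The only step requiring a moment's thought is the reduction to the singleton form, where the identity operator annihilates the linear-system component of $\Gamma$ and the subdifferential component merely decides whether the singleton survives. Notably, the calmness here is global (it holds for all $(y,g)$, not just in a neighborhood) and enjoys the sharp constant $\kappa=1$, so no regularity hypothesis on $\del P$ is ever needed---in sharp contrast to the later scenarios (S2)--(S4), where controlling the interaction between the affine constraint and $\del P$ is the crux of the matter.
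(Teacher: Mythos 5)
Your proof is correct and follows essentially the same route as the paper: both derive the singleton form~\eqref{eq:str-cvx-sol-map} of the solution map, observe that $\mathcal{X}=\Gamma(\bar{y},\bar{g})=\{\bar{x}\}$ with $\bar{x}=\bar{y}$, read off the calmness inclusion $\Gamma(y,g)\subseteq\Gamma(\bar{y},\bar{g})+\|(y,g)-(\bar{y},\bar{g})\|_2\,\mathbb{B}_{\mathcal{E}}$ with constant $1$, and conclude via Corollary~\ref{cor:orig-eb}. Your closing observation that the calmness here is global and that conditions (C1)--(C2) of Theorem~\ref{thm:two-conditions} are never needed is accurate and matches the paper's treatment of this case as a warm-up.
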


\subsection{Polyhedral Convex Regularizer} \label{subsec:polyhedral}
Next, we consider instances of Problem~\eqref{eq:str-cvx-prob} in which the regularizer $P$ is polyhedral convex.  This covers settings where $P$ is the LASSO regularizer (\ie, $P(x)=\|x\|_1$) or the $\ell_\infty$-norm regularizer (\ie, $P(x)=\|x\|_\infty$).  In view of Corollary~\ref{cor:orig-eb}, to establish the error bound~\eqref{eq:err-bd}, it suffices to check conditions (C1) and (C2) in Theorem~\ref{thm:two-conditions}.  Towards that end, let us write $\Gamma(y,g)=\Gamma_f(y)\cap\Gamma_P(g)$, where $\Gamma_f,\Gamma_P$ are defined in~\eqref{eq:sol-map-split}.  Since $P$ is a polyhedral convex function, its conjugate $P^*$ is also a polyhedral convex function~\cite[Theorem 19.2]{rockafellar1970convex}.  Thus, by~\cite[Corollary 23.5.1 and Theorem 23.10]{rockafellar1970convex}, the set
$$ \del P^*(-\bar{g}) = (\del P)^{-1}(-\bar{g}) = \Gamma_P(\bar{g}) $$
is polyhedral convex.  As $\Gamma_f(\bar{y})$ is also polyhedral convex (it is the set of solutions to a linear system), we conclude from Fact~\ref{fact:linear-regular} that the collection $\{\Gamma_f(\bar{y}),\Gamma_P(\bar{g})\}$ is boundedly linearly regular; \ie, condition (C1) is satisfied.  

Now, by~\cite[Proposition 3]{R76}, $\del P^*=(\del P)^{-1}$ is a \emph{polyhedral multi-function}; \ie, $\mbox{gph}(\del P^*)$ is the union of a finite (possibly empty) collection of polyhedral convex sets (see~\cite{ZZS15} for an alternative proof of this result).  Hence, we can invoke a celebrated result of Robinson~\cite{robinson1981some} to conclude that $(\del P)^{-1}$ is calm at $-\bar{g}$ for any $\bar{x}\in\mathcal{X}$; see~\cite[Proposition 3H.1]{dontchev2009implicit}.  This, together with Fact~\ref{fact:calm-metricsub}, implies that for any $\bar{x}\in\mathcal{X}$, $\del P$ is metrically sub-regular at $\bar{x}$ for $-\bar{g}$; \ie, condition (C2) is satisfied.
\begin{prop}
Suppose that Problem~\eqref{eq:str-cvx-prob} satisfies Assumptions~\ref{ass:smooth-func} and~\ref{ass:level-bd} with $P$ being a polyhedral convex regularizer. Then, the error bound~\eqref{eq:err-bd} holds.
\end{prop}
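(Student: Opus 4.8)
The plan is to invoke Corollary~\ref{cor:orig-eb}, which reduces the validity of the error bound~\eqref{eq:err-bd} to the calmness of the solution map $\Gamma$ at $(\bar{y},\bar{g})$ for every $\bar{x}\in\Gamma(\bar{y},\bar{g})=\mathcal{X}$. To establish this calmness, I would appeal to Theorem~\ref{thm:two-conditions} and verify its two hypotheses (C1) and (C2) for the collection $\mathcal{C}=\{\Gamma_f(\bar{y}),\Gamma_P(\bar{g})\}$, where $\Gamma_f,\Gamma_P$ are as in~\eqref{eq:sol-map-split}. The whole argument hinges on exploiting the polyhedrality of $P$ to reduce both conditions to classical results for polyhedral sets and polyhedral multi-functions.

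For condition (C1), I would first recall that the conjugate $P^*$ of a polyhedral convex function $P$ is again polyhedral convex, by~\cite[Theorem 19.2]{rockafellar1970convex}. Combined with the identity $\Gamma_P(\bar{g})=(\del P)^{-1}(-\bar{g})=\del P^*(-\bar{g})$ (using~\cite[Corollary 23.5.1]{rockafellar1970convex}) together with~\cite[Theorem 23.10]{rockafellar1970convex}, this shows that $\Gamma_P(\bar{g})$ is polyhedral convex. Since $\Gamma_f(\bar{y})$ is the solution set of a linear system and hence polyhedral, and since their intersection $\mathcal{X}$ is non-empty by Assumption~\ref{ass:level-bd}, Fact~\ref{fact:linear-regular} applies with $r=0$, in which case no relative-interior requirement remains and mere non-emptiness of the intersection suffices. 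Thus $\mathcal{C}$ is boundedly linearly regular, which is (C1).

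For condition (C2), the decisive observation is that $(\del P)^{-1}=\del P^*$ is a \emph{polyhedral multi-function} whenever $P$ is polyhedral: by~\cite[Proposition 3]{R76}, the graph of $\del P^*$ is a finite union of polyhedral convex sets. I would then invoke Robinson's upper-Lipschitz theorem for polyhedral multi-functions~\cite{robinson1981some} (see~\cite[Proposition 3H.1]{dontchev2009implicit}) to conclude that $(\del P)^{-1}$ is calm at $-\bar{g}$ for any $\bar{x}\in\mathcal{X}$. Applying Fact~\ref{fact:calm-metricsub} then yields the metric sub-regularity of $\del P$ at each $\bar{x}\in\mathcal{X}$ for $-\bar{g}$, which is exactly (C2). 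With (C1) and (C2) in hand, Theorem~\ref{thm:two-conditions} guarantees that $\Gamma$ is calm at $(\bar{y},\bar{g})$ for every $\bar{x}\in\Gamma(\bar{y},\bar{g})$, and Corollary~\ref{cor:orig-eb} then delivers~\eqref{eq:err-bd}.

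The step requiring the most care---though not a genuine obstacle, given the cited machinery---is recognizing that $(\del P)^{-1}=\del P^*$ has polyhedral graph, since this is precisely what unlocks Robinson's calmness theorem and hence (C2). Everything else is a routine assembly of the reductions developed in Section~\ref{sec:charac-eb} with the conjugacy and polyhedrality facts from~\cite{rockafellar1970convex,R76}. It is worth emphasizing that the polyhedrality of $P$ is used twice in conceptually distinct ways: once to obtain the bounded linear regularity of the \emph{pair} of polyhedra $\{\Gamma_f(\bar{y}),\Gamma_P(\bar{g})\}$, and once to obtain the calmness of the single polyhedral multi-function $\del P^*$; keeping these two roles separate is what makes the proof transparent.
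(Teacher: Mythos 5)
Your proposal is correct and takes essentially the same route as the paper's own proof: both reduce the claim to conditions (C1) and (C2) of Theorem~\ref{thm:two-conditions} via Corollary~\ref{cor:orig-eb}, establish (C1) from the polyhedrality of $\Gamma_P(\bar{g}) = \del P^*(-\bar{g})$ using the same conjugacy results from~\cite{rockafellar1970convex} together with Fact~\ref{fact:linear-regular}, and establish (C2) from Robinson's calmness theorem for the polyhedral multi-function $(\del P)^{-1}=\del P^*$ combined with Fact~\ref{fact:calm-metricsub}. The only difference is cosmetic: you make explicit that Fact~\ref{fact:linear-regular} is invoked with $r=0$, so that non-emptiness of the intersection (guaranteed by Assumption~\ref{ass:level-bd}) suffices, a point the paper leaves implicit.
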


Modulo the boundedness assumption on $\mathcal{X}$ (see Assumption~\ref{ass:level-bd}), our argument above leads to an alternative proof of Theorem 2.1 in Luo and Tseng~\cite{luo1992linear} and a part of Theorem 4 in Tseng and Yun~\cite{tseng2009coordinate}.  It is worth noting that one can use the machinery developed in Section~\ref{sec:charac-eb} to establish the error bound~\eqref{eq:err-bd} without assuming the boundedness of $\mathcal{X}$.  However, one needs to exploit the polyhedrality of $P$, just as it was done in~\cite{luo1992linear,tseng2009coordinate}. Since our original motivation is to develop an analysis framework that can tackle non-polyhedral regularizers $P$, we choose not to pursue a separate, more refined analysis for the polyhedral case, so as to streamline the presentation.

\subsection{Grouped LASSO Regularizer} \label{subsec:groupLASSO}
Now, let us consider Problem~\eqref{eq:str-cvx-prob} with grouped LASSO regularization; \ie, $\mathcal{E}=\R^n$ and $P:\R^n\limto\R$ takes the form $P(x)=\sum_{J\in\mathcal{J}} \omega_J\|x_J\|_2$, where $\mathcal{J}$ is a partition of $\{1,\ldots,n\}$, $x_J\in\R^{|J|}$ is the vector obtained by restricting $x\in\R^n$ to the entries in $J\in\mathcal{J}$, and $\omega_J\ge0$ is a given parameter. The grouped LASSO regularizer, which is convex but not polyhedral in general, is motivated by the desire to induce sparsity among the variables at a group level and has found many applications in statistics; see, \eg,~\cite{LZ06,yuan2006model,meier2008group}.  In a groundbreaking work, Tseng~\cite{tseng2010approximation} showed that the error bound~\eqref{eq:err-bd} holds in this case.  However, his proof involves a delicate and tedious argument.  In particular, it does not offer much insight into how the grouped LASSO regularizer is different from other non-polyhedral convex regularizers, for which a Lipschitzian error bound similar to~\eqref{eq:err-bd} typically does not hold without further assumptions (we shall see one such example in the next sub-section).  In what follows, we will provide an alternative, more transparent proof of Tseng's result using the machinery developed in Section~\ref{sec:charac-eb}.  The proof reveals that the grouped LASSO regularizer and the solution map $\Gamma$ it induces possess nice structural and regularity properties.  Such properties make it possible to establish the error bound~\eqref{eq:err-bd} even though the grouped LASSO regularizer is non-polyhedral.
%Our proof reveals that the validity of the error bound~\eqref{eq:err-bd} in this case can be attributed to the nice regularity properties of the solution map $\Gamma$ and regularizer $P$, as they satisfy conditions (C1) and (C2) in Theorem~\ref{thm:two-conditions}.

To begin, recall that for any $x\in\mathcal{F}$, where $\mathcal{F}$ is a finite-dimensional Euclidean space, we have
\begin{equation} \label{eq:subdiff-2norm} 
\del\|x\|_2 = \left\{ s \in \mathcal{F} \mid \|s\|_2 \le 1, \, \langle s,x \rangle = \|x\|_2 \right\} = \left\{
\begin{array}{c@{\quad}l}
\mathbb{B}_{\mathcal{F}} & \mbox{if } x=\bz, \\
\noalign{\smallskip}
x/\|x\|_2 & \mbox{otherwise}.
\end{array}
\right.
\end{equation}
Since $\del P(x)=\sum_{J\in\mathcal{J}} \omega_J\del\|x_J\|_2$ by~\cite[Theorem 23.8]{rockafellar1970convex} and $\mathcal{J}$ is a partition of $\{1,\ldots,n\}$, a simple calculation shows that $-g\in\del P(x)$ if and only if $-g_J \in \omega_J\del\|x_J\|_2$ for all $J\in\mathcal{J}$. In particular, for any $g\in\R^n$, we have
\begin{equation} \label{eq:cartesian-prod-group-lasso}
\Gamma_P(g) = \prod_{J\in\mathcal{J}} \Gamma_{P,J}(g),
\end{equation}
where
$$ \Gamma_{P,J}(g) := \left\{ x\in\R^{|J|} \mid -g_J \in \omega_J\del\|x\|_2 \right\} \quad\mbox{for } J \in \mathcal{J}. $$
The following result provides an explicit characterization of $\Gamma_{P,J}(g)$, where $J\in\mathcal{J}$:
\begin{prop} \label{prop:x-in-sub-2norm}
Let $J\in\mathcal{J}$ and $g\in\R^n$ be fixed.  If $\omega_J=0$, then
$$ 
\Gamma_{P,J}(g) = \left\{
\begin{array}{l@{\quad}l}
\R^{|J|} & \mbox{if } g_J = \bz, \\
\noalign{\smallskip}
\emptyset & \mbox{otherwise}.
\end{array}
\right.
$$
On the other hand, if $\omega_J>0$, then
$$ 
\Gamma_{P,J}(g) = \left\{
\begin{array}{l@{\quad}l}
\emptyset & \mbox{if } \|g_J\|_2 > \omega_J, \\
\noalign{\smallskip}
\left\{ a \cdot g_J \in \R^{|J|} \mid a \le 0 \right\} & \mbox{if } \|g_J\|_2=\omega_J, \\
\noalign{\smallskip}
\{\bz\} & \mbox{if } \|g_J\|_2 < \omega_J.
\end{array}
\right.
$$
Consequently, $\Gamma_{P,J}(g)$ is a polyhedral convex set.
\end{prop}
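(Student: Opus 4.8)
The plan is to prove the characterization by a direct case analysis, driven entirely by the closed-form expression for $\del\|x\|_2$ recorded in~\eqref{eq:subdiff-2norm}. Writing $u := g_J$ and $\omega := \omega_J$ for brevity, the defining condition $x \in \Gamma_{P,J}(g)$ reads $-u \in \omega\,\del\|x\|_2$, and I would split the analysis first according to whether $\omega = 0$ or $\omega > 0$, and then---when $\omega > 0$---according to whether $x = \bz$ or $x \neq \bz$. In each leaf of this case tree the membership condition collapses to an elementary inequality or equation in $u$ and $x$.

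For the case $\omega = 0$, I would observe that $\del\|x\|_2$ is nonempty for every $x$ (by~\eqref{eq:subdiff-2norm} it equals either $\mathbb{B}_{\R^{|J|}}$ or a single unit vector), so that $\omega\,\del\|x\|_2 = \{\bz\}$ regardless of $x$. Hence $x \in \Gamma_{P,J}(g)$ holds for every $x$ when $u = \bz$ and for no $x$ otherwise, which gives the stated dichotomy. For $\omega > 0$, the condition rewrites as $-u/\omega \in \del\|x\|_2$. When $x = \bz$, formula~\eqref{eq:subdiff-2norm} gives $\del\|x\|_2 = \mathbb{B}_{\R^{|J|}}$, so the condition is $\|u\|_2 \le \omega$; thus $\bz \in \Gamma_{P,J}(g)$ exactly when $\|g_J\|_2 \le \omega_J$. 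When $x \neq \bz$, the subdifferential is the single unit vector $x/\|x\|_2$, so membership forces $\|u\|_2/\omega = 1$, i.e.\ $\|u\|_2 = \omega$, together with $x/\|x\|_2 = -u/\omega$; solving for $x$ shows that the nonzero solutions are precisely the strictly negative multiples of $u$, namely $\{a\,u \mid a < 0\}$. Combining the $x = \bz$ and $x \neq \bz$ contributions according to whether $\|g_J\|_2$ is greater than, equal to, or less than $\omega_J$ then yields the three sub-cases in the statement, with the closed ray $\{a\,g_J \mid a \le 0\}$ arising in the equality case as the union of the origin with the open ray.

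Finally, the polyhedrality claim follows by inspection, since each of the sets $\R^{|J|}$, $\emptyset$, $\{\bz\}$, and the ray $\{a\,g_J \mid a \le 0\}$ is a polyhedral convex set (the ray is cut out by finitely many linear constraints determined by $g_J$ together with a supporting hyperplane through the origin). I do not anticipate a genuine obstacle here, as the whole argument is elementary; the only point requiring care is the nonzero case, where one must verify both the necessity of $\|g_J\|_2 = \omega_J$ and the exact sign constraint on the scalar multiple, and then check consistency at the value $a = 0$ so that the origin and the open ray glue into the closed ray asserted for $\|g_J\|_2 = \omega_J$.
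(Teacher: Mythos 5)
Your proof is correct and follows essentially the same route as the paper's: a direct, elementary case analysis driven by the closed-form expression~\eqref{eq:subdiff-2norm}. The only cosmetic difference is that you organize the $\omega_J>0$ case by splitting on $x=\bz$ versus $x\neq\bz$ and solving the unit-vector equation $x/\|x\|_2=-g_J/\omega_J$ directly, whereas the paper splits on the size of $\|g_J\|_2$ and settles the boundary case $\|g_J\|_2=\omega_J$ via the equality case of the Cauchy--Schwarz inequality; both amount to the same computation and reach identical conclusions, including the polyhedrality claim.
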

\begin{proof}
The case where $\omega_J=0$ is trivial.  Thus, let us focus on the case where $\omega_J>0$.  Using~\eqref{eq:subdiff-2norm}, it is clear that $\Gamma_{P,J}(g)=\emptyset$ (resp.~$\Gamma_{P,J}(g)=\{\bz\}$) when $\|g_J\|_2>\omega_J$ (resp.~$\|g_J\|_2 < \omega_J$).  Now, suppose that $\|g_J\|_2=\omega_J$ and $x\in\Gamma_{P,J}(g)$.  By~\eqref{eq:subdiff-2norm} and the Cauchy-Schwarz inequality, we have $\|x\|_2 = \langle -g_J/\omega_J,x \rangle \le \|x\|_2$, which implies that $x$ is a non-negative multiple of $-g_J/\omega_J$.  Conversely, it is easy to see that $a\cdot g_J \in \Gamma_{P,J}(g)$ for any $a \le 0$.  This completes the proof.
\end{proof}

\medskip
From~\eqref{eq:cartesian-prod-group-lasso} and Proposition~\ref{prop:x-in-sub-2norm}, we see that $\Gamma_P(\bar{g})$ is a polyhedral convex set. Thus, by Fact~\ref{fact:linear-regular}, the collection $\{\Gamma_f(\bar{y}),\Gamma_P(\bar{g})\}$ is boundedly linearly regular; \ie, condition (C1) in Theorem~\ref{thm:two-conditions} is satisfied.  

Next, we show that for any $\bar{x}\in\mathcal{X}$, $\del P$ is metrically sub-regular at $\bar{x}$ for $-\bar{g}$; \ie, condition (C2) in Theorem~\ref{thm:two-conditions} is also satisfied.  Using the product structure~\eqref{eq:cartesian-prod-group-lasso} of $\Gamma_P$, for any $x\in\R^n$, we have
$$ d\left( x, (\del P)^{-1}(-\bar{g}) \right)^2 = d\left( x,\Gamma_P(\bar{g}) \right)^2 = \sum_{J\in\mathcal{J}} d\left( x_J,\Gamma_{P,J}(\bar{g}) \right)^2 = \sum_{J\in\mathcal{J}} d\left( x_J, (\omega_J\del\|\cdot\|_2)^{-1}(-\bar{g}_J) \right)^2. $$
Moreover, observe that
$$ d\left( -\bar{g},\del P(x) \right)^2 = \sum_{J\in\mathcal{J}} d\left( -\bar{g}_J, \omega_J\del\|x_J\|_2 \right)^2. $$
Since $(\bar{x},-\bar{g})\in{\rm gph}(\del P)$, Proposition~\ref{prop:x-in-sub-2norm} implies that $\bar{g}_J=\bz$ whenever $\omega_J=0$, where $J\in\mathcal{J}$.  This in turn implies that
\begin{eqnarray}
d\left( x, (\del P)^{-1}(-\bar{g}) \right)^2 &=&  \sum_{J\in\mathcal{J} \, \mid \, \omega_J>0} d\left( x_J, (\omega_J\del\|\cdot\|_2)^{-1}(-\bar{g}_J) \right)^2 \nonumber \\
\noalign{\medskip}
&=& \sum_{J\in\mathcal{J} \, \mid \, \omega_J>0} d\left( x_J, (\del\|\cdot\|_2)^{-1}(-\bar{g}_J/\omega_J) \right)^2 \label{eq:2-norm-lhs}
\end{eqnarray}
and
\begin{equation} \label{eq:2-norm-rhs}
d\left( -\bar{g},\del P(x) \right)^2 = \sum_{J\in\mathcal{J} \,\mid\, \omega_J>0} d\left( -\bar{g}_J, \omega_J\del\|x_J\|_2 \right)^2 = \sum_{J\in\mathcal{J} \,\mid\, \omega_J>0} \omega_J^2 \cdot d\left( -\bar{g}_J/\omega_J,\del\|x_J\|_2 \right)^2.
\end{equation}
To proceed, we prove the following result, which would allow us to bound each summand in~\eqref{eq:2-norm-lhs} by the corresponding summand in~\eqref{eq:2-norm-rhs}.
\begin{prop} \label{prop:2-norm-metric-subreg}
The multi-function $\del\|\cdot\|_2:\mathcal{F}\rightrightarrows\mathcal{F}$ is metrically sub-regular at any $x\in\mathcal{F}$ for any $s\in\mathcal{F}$ such that $(x,s)\in{\rm gph}(\del\|\cdot\|_2)$.
\end{prop}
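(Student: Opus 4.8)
The plan is to verify Definition~\ref{defi:calmness-metric-sub}(b) directly, exploiting the explicit form of $\del\|\cdot\|_2$ recorded in~\eqref{eq:subdiff-2norm}. Fixing $(x,s)\in{\rm gph}(\del\|\cdot\|_2)$, I must produce constants $\kappa,\epsilon>0$ with
$$
d\big(u,(\del\|\cdot\|_2)^{-1}(s)\big)\le\kappa\cdot d\big(s,\del\|u\|_2\big)\qquad\mbox{for all }u\in\mathbb{B}_{\mathcal{F}}(x,\epsilon).
$$
First I would identify the target set $(\del\|\cdot\|_2)^{-1}(s)$. By~\cite[Corollary 23.5.1]{rockafellar1970convex} we have $(\del\|\cdot\|_2)^{-1}=\del(\|\cdot\|_2)^*$, and since $(\|\cdot\|_2)^*$ is the indicator function of $\mathbb{B}_{\mathcal{F}}$, the set $(\del\|\cdot\|_2)^{-1}(s)$ is the normal cone of $\mathbb{B}_{\mathcal{F}}$ at $s$. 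Because $(x,s)\in{\rm gph}(\del\|\cdot\|_2)$ forces $\|s\|_2\le1$ (and $s=x/\|x\|_2$, hence $\|s\|_2=1$, when $x\ne\bz$), this normal cone equals $\{\bz\}$ when $\|s\|_2<1$ and the ray $\{a s\mid a\ge0\}$ when $\|s\|_2=1$. The proof then splits into three cases.

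\emph{Case A: $x\ne\bz$.} Here $\|s\|_2=1$, and choosing $\epsilon<\|x\|_2$ guarantees that every $u\in\mathbb{B}_{\mathcal{F}}(x,\epsilon)$ satisfies $u\ne\bz$ and $\langle u,s\rangle>0$, so $\del\|u\|_2=\{\hat u\}$ with $\hat u:=u/\|u\|_2$, the projection of $u$ onto the ray is $\langle u,s\rangle s$, and $d(s,\del\|u\|_2)=\|s-\hat u\|_2$. The computation I would carry out (trivial when $\hat u=s$, since then both sides vanish) is the pair of identities
$$
d\big(u,\{as\mid a\ge0\}\big)^2=\|u\|_2^2\big(1-\langle\hat u,s\rangle^2\big),\qquad \|s-\hat u\|_2^2=2\big(1-\langle\hat u,s\rangle\big),
$$
whose quotient equals $\tfrac12\|u\|_2^2\big(1+\langle\hat u,s\rangle\big)\le\|u\|_2^2\le(\|x\|_2+\epsilon)^2$; taking $\kappa=\|x\|_2+\epsilon$ then gives the bound.

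\emph{Case B: $x=\bz$, $\|s\|_2<1$.} The target set is $\{\bz\}$, so the left-hand side is $\|u\|_2$. For $u=\bz$ both sides vanish; for $u\ne\bz$ one has $\del\|u\|_2=\{\hat u\}$, whence the reverse triangle inequality gives $d(s,\del\|u\|_2)=\|s-\hat u\|_2\ge1-\|s\|_2>0$, so any $\epsilon>0$ works with $\kappa=\epsilon/(1-\|s\|_2)$. \emph{Case C: $x=\bz$, $\|s\|_2=1$.} The target is again the ray $\{as\mid a\ge0\}$. When $\langle u,s\rangle\ge0$ the two identities of Case~A apply verbatim and yield the bound with $\kappa=\epsilon$; when $\langle u,s\rangle<0$ the projection onto the ray is the origin, so $d(u,\{as\mid a\ge0\})=\|u\|_2\le\epsilon$ while $\|s-\hat u\|_2^2=2(1-\langle\hat u,s\rangle)>2$, again giving the bound with $\kappa=\epsilon$.

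The routine parts are the elementary distance identities above; the step requiring care is the case $x=\bz$, where $\del\|\cdot\|_2$ is genuinely set-valued (equal to all of $\mathbb{B}_{\mathcal{F}}$) and the target set $(\del\|\cdot\|_2)^{-1}(s)$ changes shape---from the singleton $\{\bz\}$ to a ray---as $\|s\|_2$ crosses the value $1$. The main obstacle is to confirm, in each sub-case, that the residual $d(s,\del\|u\|_2)$ stays bounded away from zero precisely when the left-hand distance does not vanish (namely when $u\ne\bz$ lands on the ``wrong side'' of the ray, or when the target is the singleton $\{\bz\}$). This is exactly what the reverse-triangle-inequality estimates in Cases~B and~C supply, so no delicate second-order analysis of the norm is needed.
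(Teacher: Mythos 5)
Your proof is correct, and its skeleton is the same as the paper's: identify $(\del\|\cdot\|_2)^{-1}(s)$ explicitly (the singleton $\{\bz\}$ when $\|s\|_2<1$, the ray $\{as\mid a\ge0\}$ when $\|s\|_2=1$), then verify Definition~\ref{defi:calmness-metric-sub}(b) by a case split on $\|s\|_2$; your Case B is exactly the paper's first case. The one real difference is how the ray case is handled. The paper dispatches it in a single line, valid for every $x$ in the ball regardless of the sign of $\langle x,s_0\rangle$ and regardless of whether the base point is $\bz$: it bounds the distance to the ray by the distance to the \emph{particular} point $\|x\|_2 s_0$ on the ray, giving $d(x,\{as_0\mid a\ge0\})\le\|x-\|x\|_2 s_0\|_2=\|x\|_2\,\|s_0-x/\|x\|_2\|_2=\|x\|_2\cdot d(s_0,\del\|x\|_2)$, whence $\kappa=\|x_0\|_2+\epsilon$ covers your Cases A and C simultaneously. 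You instead compute the exact projection onto the ray and factor $1-\langle\hat u,s\rangle^2=(1-\langle\hat u,s\rangle)(1+\langle\hat u,s\rangle)$, which recovers precisely the same inequality $d(u,\mathrm{ray})\le\|u\|_2\,\|s-\hat u\|_2$ but at the cost of the A/C split and the sub-case on the sign of $\langle u,s\rangle$; so your route is more computational without being more general. (Your identification of the inverse via $(\del\|\cdot\|_2)^{-1}=\del(\|\cdot\|_2)^*$ and the normal cone of $\mathbb{B}_{\mathcal{F}}$ is a fine substitute for the paper's appeal to Proposition~\ref{prop:x-in-sub-2norm}.) One pedantic repair: in Case C, the point $u=\bz$ satisfies $\langle u,s\rangle\ge0$ but $\hat u$ is undefined there, so the Case A identities do not "apply verbatim"; you should note separately that both sides vanish at $u=\bz$, since $\bz$ lies on the ray and $s\in\mathbb{B}_{\mathcal{F}}=\del\|\bz\|_2$.
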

\begin{proof}
Let $(x_0,s_0)\in{\rm gph}(\del\|\cdot\|_2)$ be arbitrary.  By~\eqref{eq:subdiff-2norm}, we have $\|s_0\|_2\le1$.  Consider first the case where $\|s_0\|_2<1$.  We have $x_0=\bz$ and $(\del\|\cdot\|_2)^{-1}(s_0)=\{\bz\}$.  It follows that for any $x\in\mathcal{F}$,
$$ d\left( x,(\del\|\cdot\|_2)^{-1}(s_0) \right) = \|x\|_2. $$
On the other hand, set $\epsilon=\min\{ \|y-s_0\|_2 \mid \|y\|_2 = 1\}$.  Since $\|s_0\|_2<1$, we have $\epsilon>0$.  Moreover, for any $x\in\mathcal{F} \setminus\{\bz\}$, we have $d(s_0,\del\|x\|_2) = \|s_0 - (x/\|x\|_2)\|_2 \ge \epsilon$.  Thus, we obtain
$$ d\left( x,(\del\|\cdot\|_2)^{-1}(s_0) \right) \le d(s_0,\del\|x\|_2) \quad\mbox{for all } x \in \mathbb{B}_{\mathcal{F}}(x_0,\epsilon), $$
as required.

Next, consider the case where $\|s_0\|_2=1$.  Let $\epsilon>0$ be arbitrary and set $\kappa=\|x_0\|_2+\epsilon>0$.  We claim that
$$ d\left( x,(\del\|\cdot\|_2)^{-1}(s_0) \right) \le \kappa \cdot d(s_0,\del\|x\|_2) \quad\mbox{for all } x\in\mathbb{B}_{\mathcal{F}}(x_0,\epsilon). $$
Indeed, let $x\in\mathbb{B}_{\mathcal{F}}(x_0,\epsilon)$ be arbitrary. If $x=\bz$, then $d\left( x,(\del\|\cdot\|_2)^{-1}(s_0) \right)=d(s_0,\del\|x\|_2)=0$, which implies that the above inequality holds trivially.  Otherwise, note that by Proposition~\ref{prop:x-in-sub-2norm}, we have $(\del\|\cdot\|_2)^{-1}(s_0) = \left\{ a\cdot s_0 \in \mathcal{F} \mid a \ge 0 \right\}$.  This, together with~\eqref{eq:subdiff-2norm} and the definition of $\kappa$, yields
$$ d\left( x,(\del\|\cdot\|_2)^{-1}(s_0) \right) \le \| x-\|x\|_2\cdot s_0 \|_2 = \|x\|_2 \cdot \| s_0 - (x/\|x\|_2) \|_2 \le \kappa\cdot d(s_0,\del\|x\|_2), $$
as desired.
\end{proof}

\medskip
From Proposition~\ref{prop:2-norm-metric-subreg} and the fact that $(\bar{x}_J,-\bar{g}_J)\in{\rm gph}(\omega_J\del\|\cdot\|_2)$ for $J\in\mathcal{J}$, we deduce that for each $J\in\mathcal{J}$ with $\omega_J>0$, there exist constants $\kappa_J,\epsilon_J>0$ such that 
\begin{equation} \label{eq:comp-metric-subreg}
 d\left( x_J, (\del\|\cdot\|_2)^{-1}(-\bar{g}_J/\omega_J) \right) \le \kappa_J \cdot d\left( -\bar{g}_J/\omega_J,\del\|x_J\|_2 \right) \quad\mbox{for all } x_J \in \mathbb{B}_{\R^{|J|}}(\bar{x}_J,\epsilon_J). 
\end{equation}
%This is accomplished by proving the following more general result:
It then follows from~\eqref{eq:2-norm-lhs},~\eqref{eq:2-norm-rhs}, and~\eqref{eq:comp-metric-subreg} that
\begin{eqnarray*}
d\left( x, (\del P)^{-1}(-\bar{g}) \right)^2 &\le& \sum_{J\in\mathcal{J} \, \mid \, \omega_J>0} \kappa_J^2 \cdot d\left( -\bar{g}_J/\omega_J,\del\|x_J\|_2 \right)^2 \\
\noalign{\medskip}
&\le& \kappa^2 \cdot d\left( -\bar{g},\del P(x) \right)^2 \quad\mbox{for all } x \in \mathbb{B}_{\R^n}(\bar{x},\epsilon),
\end{eqnarray*}
where
$$ \kappa=\max_{J\in\mathcal{J}\,\mid\,\omega_J>0} \frac{\kappa_J}{\omega_J} \quad\mbox{and}\quad \epsilon=\min_{J\in\mathcal{J}\,\mid\,\omega_J>0} \epsilon_J. $$
In other words, $\del P$ is metrically sub-regular at $\bar{x}$ for $-\bar{g}$.

Finally, by invoking Theorem~\ref{thm:two-conditions} and Corollary~\ref{cor:orig-eb}, we recover the following result of Tseng~\cite[Theorem 2]{tseng2010approximation}:
\begin{prop}
Suppose that Problem~\eqref{eq:str-cvx-prob} satisfies Assumptions~\ref{ass:smooth-func} and~\ref{ass:level-bd} with $P$ being the grouped LASSO regularizer. Then, the error bound~\eqref{eq:err-bd} holds.
\end{prop}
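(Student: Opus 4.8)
The plan is to reduce the proposition to the two structural conditions of Theorem~\ref{thm:two-conditions} and then invoke Corollary~\ref{cor:orig-eb}. Concretely, it suffices to show that the collection $\mathcal{C}=\{\Gamma_f(\bar{y}),\Gamma_P(\bar{g})\}$ is boundedly linearly regular (condition (C1)) and that $\del P$ is metrically sub-regular at every $\bar{x}\in\mathcal{X}$ for $-\bar{g}$ (condition (C2)). Granting these, Theorem~\ref{thm:two-conditions} gives the calmness of the solution map $\Gamma$ at $(\bar{y},\bar{g})$ for any $\bar{x}\in\Gamma(\bar{y},\bar{g})$, and Corollary~\ref{cor:orig-eb} immediately yields~\eqref{eq:err-bd}. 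The entire burden therefore rests on checking (C1) and (C2), and here I would capitalize on the fact that the grouped LASSO subdifferential decouples across the blocks of the partition $\mathcal{J}$.

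For condition (C1), I would first observe that, since $\mathcal{J}$ is a partition, the inclusion $-g\in\del P(x)$ separates group by group, giving the product representation $\Gamma_P(g)=\prod_{J\in\mathcal{J}}\Gamma_{P,J}(g)$ from~\eqref{eq:cartesian-prod-group-lasso}. Proposition~\ref{prop:x-in-sub-2norm} then describes each block $\Gamma_{P,J}(\bar{g})$ explicitly as a ray, a singleton, or a full space, so each block---and hence $\Gamma_P(\bar{g})$---is polyhedral. Since $\Gamma_f(\bar{y})$ is the solution set of a linear system and is likewise polyhedral, both sets in $\mathcal{C}$ are polyhedral; the regularity hypothesis of Fact~\ref{fact:linear-regular} then reduces to $\Gamma_f(\bar{y})\cap\Gamma_P(\bar{g})=\mathcal{X}\neq\emptyset$, which holds by Assumption~\ref{ass:level-bd}. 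Thus (C1) follows directly from Fact~\ref{fact:linear-regular}.

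For condition (C2), the strategy is to lift a block-level metric sub-regularity estimate to the full regularizer. Using the product structure once more, both $d(x,(\del P)^{-1}(-\bar{g}))^2$ and $d(-\bar{g},\del P(x))^2$ split into sums over the groups $J$; the blocks with $\omega_J=0$ drop out because Proposition~\ref{prop:x-in-sub-2norm} forces $\bar{g}_J=\bz$ there, leaving only the groups with $\omega_J>0$. On each such group I would apply Proposition~\ref{prop:2-norm-metric-subreg}, which certifies that $\del\|\cdot\|_2$ is metrically sub-regular at $\bar{x}_J$ for $-\bar{g}_J/\omega_J$, producing local constants $\kappa_J,\epsilon_J>0$. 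Summing the resulting squared inequalities and setting $\kappa=\max_{J}\kappa_J/\omega_J$ and $\epsilon=\min_{J}\epsilon_J$ (over groups with $\omega_J>0$) gives the metric sub-regularity of $\del P$ at $\bar{x}$ for $-\bar{g}$, which is (C2).

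The hard part is genuinely the block-level statement, Proposition~\ref{prop:2-norm-metric-subreg}, rather than the assembly. The delicate case there is $\|s_0\|_2=1$, equivalently $x_0\neq\bz$: then $(\del\|\cdot\|_2)^{-1}(s_0)$ is a ray, and one must bound the distance of a nearby $x$ to that ray by the angular residual $\|s_0-x/\|x\|_2\|_2$, with the constant $\|x_0\|_2+\epsilon$ absorbing the radial scaling $\|x\|_2$. The complementary case $\|s_0\|_2<1$ forces $x_0=\bz$ and is handled by a uniform separation of $s_0$ from the unit sphere. Once this non-polyhedral estimate is in hand, the partition-based reduction, the vanishing of the $\omega_J=0$ blocks, and the final invocation of Theorem~\ref{thm:two-conditions} and Corollary~\ref{cor:orig-eb} are all routine.
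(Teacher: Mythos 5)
Your proposal follows the paper's own proof essentially step for step: the same reduction to conditions (C1) and (C2) of Theorem~\ref{thm:two-conditions} followed by Corollary~\ref{cor:orig-eb}, the same use of the product structure~\eqref{eq:cartesian-prod-group-lasso} and Proposition~\ref{prop:x-in-sub-2norm} to get polyhedrality of $\Gamma_P(\bar{g})$ and hence (C1) via Fact~\ref{fact:linear-regular}, and the same blockwise lifting of Proposition~\ref{prop:2-norm-metric-subreg} (dropping the $\omega_J=0$ groups, with the identical constants $\kappa=\max_J\kappa_J/\omega_J$ and $\epsilon=\min_J\epsilon_J$) to get (C2). One tiny imprecision: you describe the case $\|s_0\|_2=1$ in Proposition~\ref{prop:2-norm-metric-subreg} as ``equivalently $x_0\neq\bz$,'' but $x_0=\bz$ can also occur with $\|s_0\|_2=1$; the ray argument with constant $\|x_0\|_2+\epsilon$ covers that subcase as well, so nothing breaks.
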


%In a recent work~\cite{ZZS15}, we have extended the above arguments to show that if Problem~\eqref{eq:str-cvx-prob} satisfies Assumptions~\ref{ass:smooth-func} and~\ref{ass:level-bd} and $P$ is the $\ell_{1,p}$-norm regularizer (\ie, $\mathcal{E}=\R^m$ and $P:\R^m\limto\R$ takes the form $P(x)=\sum_{J\in\mathcal{J}} \omega_J\|x_J\|_p$) with $p\in[1,2]\cup\{\infty\}$, then the error bound~\eqref{eq:err-bd} holds.

\subsection{Nuclear Norm Regularizer}
Lastly, we consider instances of Problem~\eqref{eq:str-cvx-prob} in which $P$ is the nuclear norm regularizer; \ie, $\mathcal{E}=\R^{m\times n}$ and $P:\R^{m\times n}\limto\R$ is given by $P(X)=\|X\|_*$.  Without loss of generality, we assume that $m\le n$. The nuclear norm regularizer has been widely used in convex relaxations of low-rank matrix optimization problems; see, \eg,~\cite{AEP08,Gro11,JSZ+13} and the references therein. However, to the best of our knowledge, the question of whether the error bound~\eqref{eq:err-bd} holds in this case remains open; see Footnote 1.  In the sequel, we resolve this question by utilizing the machinery developed in Section~\ref{sec:charac-eb}.  Specifically, we first show that the error bound~\eqref{eq:err-bd} holds in this case under certain strict complementarity-type regularity condition.  Then, we give a concrete example to show that without such condition, the error bound~\eqref{eq:err-bd} could fail to hold.

\subsubsection{Basic Results in Matrix Theory} \label{sssec:matrix-basic}
Let us begin by recalling some basic results in matrix theory. Consider an arbitrary matrix $X\in\R^{m\times n}$ of rank $r\le m$, whose $i$-th largest singular value is denoted by $\sigma_i(X)$ for $i=1,\ldots,m$.  Let
\begin{equation} \label{eq:svd-std}
  X = U \begin{bmatrix} \Sigma(X) & \bz \end{bmatrix} V^T = \begin{bmatrix} U_+ & U_0 \end{bmatrix} \begin{bmatrix} \Sigma_+(X) & \bz \\ \bz & \bz \end{bmatrix} \begin{bmatrix} V_+ & V_0 \end{bmatrix}^T
\end{equation}
be any singular value decomposition (SVD) of $X$, where $U=\begin{bmatrix} U_+ & U_0 \end{bmatrix} \in \Or^m$ is orthogonal with $U_+\in\R^{m\times r}$ and $U_0\in\R^{m\times(m-r)}$; $V=\begin{bmatrix} V_+ & V_0 \end{bmatrix} \in \Or^n$ is orthogonal with $V_+\in\R^{n\times r}$ and $V_0\in\R^{n\times(n-r)}$; $\Sigma(X) = \mbox{Diag}(\sigma_1(X),\ldots,\sigma_m(X)) \in \mathbb{S}^m$ and $\Sigma_+(X) = \mbox{Diag}(\sigma_1(X),\ldots,\sigma_r(X)) \in \mathbb{S}^r$ are diagonal.  The following characterization of the subdifferential of the nuclear norm is well known:
\begin{fact} \label{fact:subdiff-nucnorm}
(see, \eg,~\cite[Example 2]{W92}) We have
$$
  \del\|X\|_* = \left\{ \begin{bmatrix} U_+ & U_0 \end{bmatrix} \begin{bmatrix} I_r & \bz \\ \bz & W \end{bmatrix} \begin{bmatrix} V_+ & V_0 \end{bmatrix}^T \,\Big|\, \|W\| \le 1 \right\},
$$
where $I_r$ is the $r\times r$ identity matrix.
\end{fact}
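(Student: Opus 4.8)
The plan is to derive the formula from the general characterization of the subdifferential of a norm, specialized via nuclear--spectral duality. Recall that for any norm $\nu$ on a Euclidean space with dual norm $\nu^\circ$, one has $\del\nu(X)=\{G\mid \nu^\circ(G)\le1,\ \langle G,X\rangle=\nu(X)\}$; this is immediate from the fact that the conjugate of $\nu$ is the indicator of the dual unit ball together with the Fenchel--Young characterization of subgradients. Since the dual of the nuclear norm is the spectral norm $\|\cdot\|$ and $\langle G,X\rangle=\Tr(G^TX)$, the task reduces to showing that
$$ \left\{ G\in\R^{m\times n} \mid \|G\|\le1,\ \langle G,X\rangle = \|X\|_* \right\} $$
coincides with the set displayed in the statement. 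I would prove the two inclusions separately.

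For the inclusion $\supseteq$, I would fix a matrix $G$ of the claimed form with $\|W\|\le1$ and verify the two defining conditions. Because $U$ and $V$ are orthogonal, both the spectral norm and the Frobenius inner product are invariant under the two-sided transformation $M\mapsto UMV^T$; hence $\|G\|$ equals the spectral norm of the block-diagonal matrix $\begin{bmatrix} I_r & \bz \\ \bz & W\end{bmatrix}$, which is $\max\{1,\|W\|\}=1$. Writing $X=U\begin{bmatrix}\Sigma_+(X) & \bz\\ \bz & \bz\end{bmatrix}V^T$ and again using orthogonal invariance, the cross terms drop out and $\langle G,X\rangle=\Tr(\Sigma_+(X))=\sum_{i=1}^r\sigma_i(X)=\|X\|_*$. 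This direction is routine.

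The converse inclusion $\subseteq$ is the crux. Given $G$ with $\|G\|\le1$ and $\langle G,X\rangle=\|X\|_*$, I would pass to coordinates $H:=U^TGV$, so that $\|H\|=\|G\|\le1$, and partition $H$ into blocks $H_{11}\in\R^{r\times r},H_{12},H_{21},H_{22}$ conformally with the SVD. Orthogonal invariance turns the inner-product condition into $\sum_{i=1}^r[H_{11}]_{ii}\,\sigma_i(X)=\sum_{i=1}^r\sigma_i(X)$. Since every entry of $H$ is bounded in magnitude by $\|H\|\le1$ and each $\sigma_i(X)>0$ for $i\le r$, this equality forces $[H_{11}]_{ii}=1$ for all $i=1,\ldots,r$. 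The decisive step is then the observation that the spectral norm controls the Euclidean norm of each column and each row of $H$: from $\|He_i\|_2\le\|H\|\le1$ together with $[He_i]_i=1$ I would conclude $He_i=e_i$ for $i\le r$, which kills $H_{21}$ and fixes $H_{11}=I_r$, and applying the same argument to $H^T$ (whose spectral norm also equals $\|H\|$) kills $H_{12}$. Thus $H=\begin{bmatrix} I_r & \bz \\ \bz & W\end{bmatrix}$ with $W:=H_{22}$, and since $W$ is a submatrix of $H$ we have $\|W\|\le\|H\|\le1$; transforming back yields $G$ in the claimed form. The main obstacle is precisely this rigidity argument: arguing about the diagonal entries alone is not enough, and one must exploit the full strength of the spectral-norm bound on rows and columns to force the off-diagonal blocks to vanish.
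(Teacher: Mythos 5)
Your proposal is correct. Note that the paper itself offers no proof of this Fact---it is stated as a known result with a citation to Watson's paper---so there is no internal argument to compare against; your proof is essentially the standard (and Watson's own) route: characterize $\del\nu(X)$ for a general norm $\nu$ as $\{G \mid \nu^\circ(G)\le 1,\ \langle G,X\rangle=\nu(X)\}$, use nuclear--spectral duality, and then analyze the equality case. The one step that needed care---showing that the off-diagonal blocks of $H=U^TGV$ vanish---is handled correctly: from $[H_{11}]_{ii}=1$ and $\|He_i\|_2\le\|H\|\le 1$ you get $\|He_i\|_2^2\ge [H_{ii}]^2=1$, forcing the rest of column $i$ to be zero, and the symmetric argument on $H^T$ kills the other block; the final bound $\|H_{22}\|\le\|H\|\le 1$ for the submatrix is also valid. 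This is exactly the rigidity argument that makes the converse inclusion work, and arguing only about diagonal entries would indeed not suffice, as you point out.
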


Although the matrices $\Sigma(X)$ and $\Sigma_+(X)$ are uniquely determined by $X$, there could be multiple pairs of orthogonal matrices $(U,V)$ that decompose $X$ into the form~\eqref{eq:svd-std}. Let
$$ \Xi(X) := \left\{ (U,V)\in\Or^m\times\Or^n \mid X = U \begin{bmatrix} \Sigma(X) & \bz \end{bmatrix} V^T \right\} $$
be the set of all such pairs of orthogonal matrices.  Furthermore, let $\bar{\sigma}_1(X)>\bar{\sigma}_2(X)>\cdots>\bar{\sigma}_s(X)$ be the distinct non-zero singular values of $X$.  Then, we can define the index sets
$$ \mathcal{I}_k := \left\{ i \in \{1,\ldots,m\} \mid \sigma_i(X) = \bar{\sigma}_k(X) \right\} \quad\mbox{for } k=1,\ldots,s. $$
%\begin{eqnarray*}
%&& \mathcal{I}_0 = \left\{ i \in \{1,\ldots,m\} \mid \sigma_i(X)=0 \right\}, \\
%\noalign{\smallskip}
%&& \mathcal{I}_k = \left\{ i \in \{1,\ldots,m\} \mid \sigma_i(X) = \bar{\sigma}_k(X) \right\} \quad\mbox{for } k=1,\ldots,s.
%\end{eqnarray*}
The following result explains the relationship between different SVDs of $X$:
\begin{fact} \label{fact:diff-svd}
(see, \eg,~\cite[Proposition 5]{DST14}) Let $(U_1,V_1),(U_2,V_2)\in\Xi(X)$ be given. Then, there exist orthogonal matrices $Q_k\in\Or^{|\mathcal{I}_k|}$ for $k=1,\ldots,s$, $Q'\in\Or^{m-r}$, and $Q''\in\Or^{n-r}$ such that
$$ U_1^TU_2 = \begin{bmatrix} Q & \bz \\ \bz & Q' \end{bmatrix} \quad\mbox{and}\quad V_1^TV_2 = \begin{bmatrix} Q & \bz \\ \bz & Q'' \end{bmatrix}, $$
where $Q={\rm BlkDiag}(Q_1,\ldots,Q_s)\in\Or^r$.
\end{fact}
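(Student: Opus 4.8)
The plan is to reduce the statement to the eigendecompositions of the symmetric matrices $XX^T$ and $X^TX$ induced by the two SVDs, and then to link the resulting within-eigenspace rotations by appealing to $X$ itself.

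First I would form $XX^T$. Because $V_i^TV_i=I_n$, the identity $X=U_i\begin{bmatrix}\Sigma(X)&\bz\end{bmatrix}V_i^T$ gives $XX^T=U_i\Sigma(X)^2U_i^T$ for $i=1,2$, so both $U_1$ and $U_2$ diagonalize $XX^T$ to the same diagonal matrix $\Sigma(X)^2$. Setting $M:=U_1^TU_2\in\Or^m$, this forces $\Sigma(X)^2M=M\Sigma(X)^2$, i.e.\ $M$ commutes with $\Sigma(X)^2$. The diagonal entries of $\Sigma(X)^2$ take the distinct positive values $\bar\sigma_1(X)^2>\cdots>\bar\sigma_s(X)^2$ on the index sets $\mathcal{I}_1,\ldots,\mathcal{I}_s$, together with the value $0$ on $\{r+1,\ldots,m\}$. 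Since any matrix commuting with a diagonal matrix must have a zero entry in every position linking two distinct diagonal values, $M$ is block diagonal along these groups; and as $M$ is orthogonal, each diagonal block is orthogonal. This yields $U_1^TU_2=\begin{bmatrix}Q&\bz\\\bz&Q'\end{bmatrix}$ with $Q={\rm BlkDiag}(Q_1,\ldots,Q_s)$, $Q_k\in\Or^{|\mathcal{I}_k|}$, and $Q'\in\Or^{m-r}$. Running the identical argument on $X^TX=V_i\begin{bmatrix}\Sigma(X)^2&\bz\\\bz&\bz\end{bmatrix}V_i^T$ produces $V_1^TV_2=\begin{bmatrix}R&\bz\\\bz&Q''\end{bmatrix}$ with $R={\rm BlkDiag}(R_1,\ldots,R_s)$, $R_k\in\Or^{|\mathcal{I}_k|}$, and $Q''\in\Or^{n-r}$; here the kernel blocks $Q'$ and $Q''$ are unconstrained, exactly as the statement allows.

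The main obstacle is that the commuting argument handles $U$ and $V$ independently and thus cannot by itself force the two upper-left blocks to agree; I still need $Q=R$. To obtain this I would return to $X$ directly: from $U_1\begin{bmatrix}\Sigma(X)&\bz\end{bmatrix}V_1^T=U_2\begin{bmatrix}\Sigma(X)&\bz\end{bmatrix}V_2^T$, multiplying on the left by $U_1^T$ and on the right by $V_1$ gives $\begin{bmatrix}\Sigma(X)&\bz\end{bmatrix}=M\begin{bmatrix}\Sigma(X)&\bz\end{bmatrix}N^T$ with $N:=V_1^TV_2$. Partitioning the columns of $\begin{bmatrix}\Sigma(X)&\bz\end{bmatrix}$ into the first $r$ and the remaining $n-r$ and substituting the block forms of $M$ and $N$, the identity collapses to $Q\,\Sigma_+(X)\,R^T=\Sigma_+(X)$, that is, $Q\,\Sigma_+(X)=\Sigma_+(X)\,R$. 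Since $\Sigma_+(X)={\rm BlkDiag}\big(\bar\sigma_1(X)I_{|\mathcal{I}_1|},\ldots,\bar\sigma_s(X)I_{|\mathcal{I}_s|}\big)$ and $Q,R$ are block diagonal with matching block sizes, the $k$-th block equation reads $\bar\sigma_k(X)Q_k=\bar\sigma_k(X)R_k$; as each $\bar\sigma_k(X)>0$, I conclude $Q_k=R_k$ for every $k$, hence $Q=R$. This is precisely where the restriction to the nonzero singular values is used, and it is what collapses the two independent eigenspace rotations into the single common block $Q$ appearing in both $U_1^TU_2$ and $V_1^TV_2$.
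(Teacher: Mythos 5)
Your proof is correct. Note, however, that the paper does not prove this statement at all: it is quoted as a known fact with a pointer to the literature (Proposition 5 of the cited reference DST14), so there is no internal proof to compare against; your argument makes the claim self-contained. What you give is the standard argument, and every step checks out: the identities $XX^T = U_i\,\Sigma(X)^2\,U_i^T$ and $X^TX = V_i\,\mathrm{BlkDiag}(\Sigma(X)^2,\bz)\,V_i^T$ for $i=1,2$ force $M:=U_1^TU_2$ and $N:=V_1^TV_2$ to commute with the respective diagonal matrices, and since the singular values are sorted, the groups $\mathcal{I}_1,\ldots,\mathcal{I}_s$ and the kernel indices form contiguous blocks, so the entrywise relation $M_{ij}\left(\sigma_i(X)^2-\sigma_j(X)^2\right)=0$ together with orthogonality yields the asserted block-diagonal, block-orthogonal structure of $M$ and $N$ separately. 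The crucial extra step---which a naive eigenspace argument misses, as you rightly point out---is tying the two upper-left blocks together: sandwiching $X$ itself gives $M\begin{bmatrix}\Sigma(X) & \bz\end{bmatrix}N^T = \begin{bmatrix}\Sigma(X) & \bz\end{bmatrix}$, which collapses to $Q\,\Sigma_+(X)\,R^T=\Sigma_+(X)$, hence $Q\,\Sigma_+(X)=\Sigma_+(X)\,R$, and blockwise $\bar\sigma_k(X)Q_k=\bar\sigma_k(X)R_k$ with $\bar\sigma_k(X)>0$ gives $Q_k=R_k$ for all $k$. The kernel blocks $Q'\in\Or^{m-r}$ and $Q''\in\Or^{n-r}$ are correctly left unconstrained, exactly as the statement allows. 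This is precisely the kind of proof the cited reference supplies, so your reconstruction is faithful in substance as well as in conclusion.
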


To facilitate our study of the local behavior of the solution map $\Gamma$, we will also need the following matrix perturbation results:
\begin{fact} \label{fact:sv-lip}
  (see, \eg,~\cite[Chapter IV, Theorem 4.11]{SS90}) For any $X,Y\in\R^{m\times n}$ and $i\in\{1,\ldots,m\}$,
  $$ |\sigma_i(X)-\sigma_i(Y)| \le \|X-Y\|_F. $$
\end{fact}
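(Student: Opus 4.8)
The plan is to deduce this perturbation bound from the Courant--Fischer (min--max) characterization of singular values, together with the elementary fact that the Frobenius norm dominates the spectral norm. First I would recall that for any $X\in\R^{m\times n}$ with $m\le n$ and any $i\in\{1,\ldots,m\}$, the $i$-th largest singular value admits the representation
$$ \sigma_i(X) = \max_{\substack{\mathcal{S}\subseteq\R^n \\ \dim(\mathcal{S})=i}} \; \min_{\substack{u\in\mathcal{S} \\ \|u\|_2=1}} \|Xu\|_2, $$
where the maximum ranges over all $i$-dimensional subspaces $\mathcal{S}$ of $\R^n$. This follows by applying the classical Courant--Fischer theorem to the positive semidefinite matrix $X^TX$, whose $i$-th largest eigenvalue equals $\sigma_i(X)^2$, and noting that $\|Xu\|_2^2=\langle X^TX\,u,u\rangle$; taking non-negative square roots yields the displayed formula.

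Next I would control the inner objective under perturbation. For every unit vector $u\in\R^n$, the triangle inequality gives
$$ \big|\,\|Xu\|_2-\|Yu\|_2\,\big| \le \|(X-Y)u\|_2 \le \|X-Y\| \le \|X-Y\|_F, $$
where the middle inequality is the definition of the spectral norm and the last is the standard bound $\|\cdot\|\le\|\cdot\|_F$. In particular $\|Xu\|_2\le\|Yu\|_2+\|X-Y\|_F$ for all unit $u$. Fixing an $i$-dimensional subspace $\mathcal{S}$ and evaluating the left-hand side at the minimizer of $\|Yu\|_2$ over unit $u\in\mathcal{S}$ (so that adding the constant $\|X-Y\|_F$ commutes with the minimization), I obtain
$$ \min_{\substack{u\in\mathcal{S} \\ \|u\|_2=1}} \|Xu\|_2 \;\le\; \min_{\substack{u\in\mathcal{S} \\ \|u\|_2=1}} \|Yu\|_2 \;+\; \|X-Y\|_F. $$
Taking the maximum over all $i$-dimensional $\mathcal{S}$ and invoking the min--max formula for both $X$ and $Y$ then gives $\sigma_i(X)\le\sigma_i(Y)+\|X-Y\|_F$. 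Interchanging the roles of $X$ and $Y$ yields the reverse inequality, and combining the two establishes $|\sigma_i(X)-\sigma_i(Y)|\le\|X-Y\|_F$.

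I expect the only delicate point to be the clean justification of the min--max characterization and the interchange of the outer maximization over subspaces with the additive perturbation term; both are routine but must be phrased carefully to avoid an off-by-one error in the subspace dimension. An alternative route that avoids singular values altogether is to pass to the symmetric dilation $\tilde{X}=\begin{bmatrix} \bz & X \\ X^T & \bz \end{bmatrix}\in\Sy^{m+n}$, whose largest $m$ eigenvalues are exactly $\sigma_1(X),\ldots,\sigma_m(X)$; applying Weyl's eigenvalue perturbation inequality to $\tilde{X}$ and $\tilde{Y}$ and observing that $\|\tilde{X}-\tilde{Y}\|=\|X-Y\|\le\|X-Y\|_F$ delivers the claim at once. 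I would present the min--max argument as the main proof and record the dilation argument as a remark.
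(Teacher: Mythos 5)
Your proof is correct, but note that the paper does not prove this statement at all: it is recorded as a known fact with a citation to Stewart and Sun (Chapter IV, Theorem 4.11), which is Mirsky's theorem asserting the much stronger bound $\bigl(\sum_{i}(\sigma_i(X)-\sigma_i(Y))^2\bigr)^{1/2}\le\|X-Y\|_F$ (and, more generally, domination of the singular-value differences in every unitarily invariant norm), of which the stated per-index inequality is an immediate weakening. What you do differently is give a self-contained, elementary derivation: Courant--Fischer applied to $X^TX$ yields the min--max representation of $\sigma_i$, the triangle inequality together with $\|\cdot\|\le\|\cdot\|_F$ perturbs the inner objective uniformly, and the two one-sided bounds combine to the claim. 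Your argument in fact establishes the sharper spectral-norm estimate $|\sigma_i(X)-\sigma_i(Y)|\le\|X-Y\|$ (Weyl's inequality for singular values) before weakening to the Frobenius norm, and your dilation remark is an equally valid second route, essentially the standard reduction of singular-value perturbation to eigenvalue perturbation. The trade-off is clear: the citation buys brevity and the full strength of Mirsky's simultaneous control of all singular values, which the paper never needs; your proof buys self-containedness, and each of its delicate points --- the min--max formula for $i\le m\le n$, the exchange of the additive constant with the inner minimization and the outer maximization over subspaces, and attainment of the inner minimum on the compact unit sphere of $\mathcal{S}$ --- is handled soundly.
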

\begin{fact} \label{fact:svec-ulc}
  (\cf~\cite[Proposition 7]{DST14}) For any $X\in\R^{m\times n}$, there exist constants $\gamma,\delta>0$ such that if $Y\in\R^{m\times n}$ satisfies $\|X-Y\|_F\le\delta$ and $U_Y\in\Or^m$, $V_Y\in\Or^n$ satisfy $(U_Y,V_Y)\in\Xi(Y)$,
%admits the SVD
%  $$ Y = U_Y \begin{bmatrix} \Sigma(Y) & \bz \end{bmatrix} V_Y^T, $$
%  where $U_Y\in\Or^m$, $V_Y\in\Or^n$ are orthogonal and $\Sigma(Y)\in\Sy^m$ is diagonal, 
then there exists a pair of orthogonal matrices $(U_X,V_X)\in\Xi(X)$ such that
  $$ \| (U_X,V_X)-(U_Y,V_Y) \|_F \le \gamma\|X-Y\|_F. $$
\end{fact}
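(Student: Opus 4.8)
The plan is to prove the claim by matrix perturbation theory, decomposing the singular spectrum of $X$ into blocks that are well separated by spectral gaps and treating each block separately. Write $X=\bar U\begin{bmatrix}\Sigma(X)&\bz\end{bmatrix}\bar V^T$ as in~\eqref{eq:svd-std} for a fixed reference pair $(\bar U,\bar V)\in\Xi(X)$, and let $g>0$ be the smallest gap between consecutive distinct singular values of $X$, including the gap $\bar\sigma_s(X)$ between the smallest nonzero singular value and $0$. First I would fix $\delta\in(0,g/4)$. By Fact~\ref{fact:sv-lip}, whenever $\|X-Y\|_F\le\delta$ the singular values of $Y$ lie within $\delta$ of those of $X$; hence they split into clusters, one cluster of exactly $|\mathcal{I}_k|$ singular values around each $\bar\sigma_k(X)$ and one cluster of the remaining small singular values near $0$, and each cluster is separated from the others by a residual gap of at least $g/2$. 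Partitioning the columns of any $(U_Y,V_Y)\in\Xi(Y)$ accordingly yields column blocks $(U_{Y,k},V_{Y,k})$ of the same sizes as the blocks of $(\bar U,\bar V)$.

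Next I would control the perturbation of the singular subspaces. For each nonzero cluster $k$, the residual gap lets me invoke Wedin's $\sin\Theta$ theorem, which bounds the principal angles $\Theta_k$ between the pair of subspaces $(\mathrm{range}(\bar U_k),\mathrm{range}(\bar V_k))$ of $X$ and the pair $(\mathrm{range}(U_{Y,k}),\mathrm{range}(V_{Y,k}))$ of $Y$ \emph{jointly} in the left and right factors, giving $\|\sin\Theta_k\|_F\le (C/g)\,\|X-Y\|_F$ for an absolute constant $C$. The same estimate applies to the kernel cluster, whose gap to the smallest nonzero cluster is at least $\bar\sigma_s(X)-2\delta\ge g/2$. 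The joint treatment of the left and right factors is precisely what couples the perturbation of $U$ and $V$, and can be made transparent by passing to the symmetric dilation $\begin{bmatrix}\bz & X\\ X^T&\bz\end{bmatrix}$, whose positive-eigenvalue eigenspaces are spanned by the stacked frames $\tfrac{1}{\sqrt2}[\bar U_k;\bar V_k]$, and applying the Davis--Kahan theorem to these eigenspaces.

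It then remains to turn the subspace estimates into a bound on actual orthogonal frames, which is where the block freedom of Fact~\ref{fact:diff-svd} enters: every element of $\Xi(X)$ has the form $(\bar U\,{\rm BlkDiag}(Q_1,\ldots,Q_s,Q'),\ \bar V\,{\rm BlkDiag}(Q_1,\ldots,Q_s,Q''))$, with the same $Q_k\in\Or^{|\mathcal{I}_k|}$ on each nonzero block and independent $Q'\in\Or^{m-r}$, $Q''\in\Or^{n-r}$ on the kernel block. I would fix these by solving, for each nonzero block, the coupled orthogonal Procrustes problem $\min_{Q_k}\bigl(\|\bar U_k Q_k-U_{Y,k}\|_F^2+\|\bar V_k Q_k-V_{Y,k}\|_F^2\bigr)$, and, for the kernel block, the two decoupled problems $\min_{Q'}\|\bar U_0 Q'-U_{Y,0}\|_F^2$ and $\min_{Q''}\|\bar V_0 Q''-V_{Y,0}\|_F^2$. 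Using the elementary identity $\min_Q\|PQ-\tilde P\|_F^2=4\sum_i\sin^2(\theta_i/2)\le 4\sum_i\sin^2\theta_i$ for orthonormal bases $P,\tilde P$ with principal angles $\theta_i$ (applied to the stacked frames for the coupled blocks), each optimal Procrustes value is bounded by a constant multiple of $\|\sin\Theta_k\|_F^2$. Summing over blocks and taking square roots gives $\|(U_X,V_X)-(U_Y,V_Y)\|_F\le\gamma\|X-Y\|_F$ with $\gamma$ of order $1/g$, and the construction keeps $(U_X,V_X)\in\Xi(X)$ since each block is assembled from $(\bar U,\bar V)$ through an admissible block-diagonal orthogonal transformation.

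The main obstacle is the coupled alignment within the degenerate (repeated-singular-value) blocks: a single orthogonal matrix $Q_k$ must simultaneously align the left frame $\bar U_k$ with $U_{Y,k}$ and the right frame $\bar V_k$ with $V_{Y,k}$, so the two factors cannot be optimized independently. Showing that the joint Procrustes value is still controlled by the \emph{joint} subspace angle—rather than suffering a loss from the incompatibility of the separately optimal rotations—is the crux, and it is exactly here that the joint $\sin\Theta$ estimate (and the dilation viewpoint that produces it) is indispensable. The kernel block requires separate handling only because its left and right dimensions $m-r$ and $n-r$ differ and its two frames are, by contrast, genuinely \emph{uncoupled}, so that the independent freedoms $Q'$ and $Q''$ may be used to align the two factors separately.
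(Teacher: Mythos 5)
There is no in-paper proof to compare against here: the paper imports this statement as a known result, citing~\cite[Proposition 7]{DST14}, and never proves it. So your proposal has to be judged as a self-contained reconstruction of the cited result, and on that score it is essentially correct: clustering the singular values by the minimal gap $g$ of $X$ (via Fact~\ref{fact:sv-lip}), bounding the perturbation of the associated spectral subspaces, and then converting subspace proximity into frame proximity by Procrustes alignments that respect the block structure of $\Xi(X)$ from Fact~\ref{fact:diff-svd} is a valid route, and the constants indeed come out as $\gamma=O(1/g)$ with any $\delta<g/4$. You also put your finger on the genuine crux — inside a repeated-singular-value block a \emph{single} $Q_k$ must align the left and right frames simultaneously, so separate left/right subspace estimates cannot suffice.

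Two points of precision. First, your opening attribution to Wedin's theorem is not quite right for what the coupled Procrustes step needs: classical Wedin controls $\|\sin\Theta_U\|_F$ and $\|\sin\Theta_V\|_F$ (or their root-sum-of-squares), i.e.\ the \emph{separate} angles, not the principal angles between the stacked subspaces ${\rm range}\bigl(\tfrac{1}{\sqrt2}[\bar U_k;\bar V_k]\bigr)$ and ${\rm range}\bigl(\tfrac{1}{\sqrt2}[U_{Y,k};V_{Y,k}]\bigr)$. The joint estimate comes only from Davis--Kahan applied to the symmetric dilations $\begin{bmatrix}\bz & X\\ X^T & \bz\end{bmatrix}$ and $\begin{bmatrix}\bz & Y\\ Y^T & \bz\end{bmatrix}$, whose eigenspaces for the eigenvalue cluster near $\bar\sigma_k(X)$ are exactly these stacked subspaces; what you present as a clarifying remark is in fact the load-bearing step, and should be stated as the primary estimate. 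Second, two small repairs for a complete write-up: the degenerate case $X=\bz$ (where $g$ is undefined) must be dispatched separately, though it is trivial since then $\Xi(X)=\Or^m\times\Or^n$; and for the kernel block you should justify the decoupled estimates, either via the identity equating principal angles of subspaces with those of their orthogonal complements, or by noting that the near-zero eigenspace of the dilation of $Y$ is the block-diagonal direct sum ${\rm span}(U_{Y,0})\oplus{\rm span}(V_{Y,0})$, so the Davis--Kahan bound there splits into exactly the two independent bounds your $Q'$ and $Q''$ Procrustes problems require. With these readings your argument is sound.
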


\subsubsection{Characterization of $\Gamma_P$}
Armed with the results in Section~\ref{sssec:matrix-basic}, our goal now is to derive an explicit expression for $\Gamma_P(G)$, where $G\in\R^{m\times n}$ is arbitrary.  Recall that
$$ \Gamma_P(G) = \left\{ X \in \R^{m\times n} \mid -G \in \del\|X\|_* \right\}. $$
Suppose that $\Gamma_P(G)\not=\emptyset$. Then, we have $\|-G\|\le1$ by Fact~\ref{fact:subdiff-nucnorm}.  This allows us to divide the singular values of $-G$ into the following three groups:
$$
\begin{array}{rcl@{\quad}l}
  \sigma_i(-G) &=& 1 & \mbox{for } i=1,\ldots,\bar{s}, \\
  \noalign{\smallskip}
  \sigma_i(-G) &\in& (0,1) & \mbox{for } i=\bar{s}+1,\ldots,\bar{r}, \\
  \noalign{\smallskip}
  \sigma_i(-G) &=& 0 & \mbox{for } i=\bar{r}+1,\ldots,m,
\end{array}
$$
where $\bar{r}=\mbox{rank}(-G)$. In particular, every SVD of $-G$ can be put into the form
\begin{equation}\label{eq:sig-val-dec-G}
-G = \begin{bmatrix}
\bar{U}_1 & \bar{U}_{(0,1)} & \bar{U}_0
\end{bmatrix}\begin{bmatrix}
I_{\bar{s}} & \mathbf{0} & \mathbf{0} \\
\mathbf{0} & \bar{\Sigma}_{(0,1)} & \mathbf{0} \\
\mathbf{0} & \mathbf{0} & \mathbf{0}
\end{bmatrix}\begin{bmatrix}
\bar{V}_1 & \bar{V}_{(0,1)} & \bar{V}_0
\end{bmatrix}^T,
\end{equation}
where $\begin{bmatrix} \bar{U}_1 & \bar{U}_{(0,1)} & \bar{U}_0 \end{bmatrix} \in \Or^m$ is orthogonal with $\bar{U}_1\in\mathbb{R}^{m\times \bar{s}}$, $\bar{U}_{(0,1)}\in\mathbb{R}^{m\times (\bar{r} - \bar{s})}$, and $\bar{U}_0\in\mathbb{R}^{m\times (m-\bar{r})}$; $\begin{bmatrix} \bar{V}_1 & \bar{V}_{(0,1)} & \bar{V}_0 \end{bmatrix} \in \Or^n$ is orthogonal with $\bar{V}_1\in\mathbb{R}^{n\times \bar{s}}$, $\bar{V}_{(0,1)}\in\mathbb{R}^{n\times (\bar{r} - \bar{s})}$, and $\bar{V}_0\in\mathbb{R}^{n\times (n-\bar{r})}$; $\bar{\Sigma}_{(0,1)} = \mbox{Diag}\left( \sigma_{\bar{s}+1}(-G),\ldots,\sigma_{\bar{r}}(-G) \right) \in \mathbb{S}^{\bar{r}-\bar{s}}$ is diagonal.  Using~\eqref{eq:sig-val-dec-G}, we have the following characterization of $\Gamma_P(G)$:
\begin{prop}\label{prop:x-in-sub}
Suppose that $-G$ admits the SVD~\eqref{eq:sig-val-dec-G}. Then, we have
$$%\begin{equation}\label{eq:x-in-sub}
\Gamma_P(G) = \left\{\begin{bmatrix}
\bar{U}_1 & \bar{U}_{(0,1)} & \bar{U}_0
\end{bmatrix}\begin{bmatrix}
Z & \mathbf{0} & \mathbf{0} \\
\mathbf{0} & \mathbf{0} & \mathbf{0} \\
\mathbf{0} & \mathbf{0} & \mathbf{0}
\end{bmatrix}\begin{bmatrix}
\bar{V}_1 & \bar{V}_{(0,1)} & \bar{V}_0
\end{bmatrix}^T \, \Bigg| \, Z\in\mathbb{S}_+^{\bar{s}} \right\}. 
$$%\end{equation}
\end{prop}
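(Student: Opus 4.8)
The plan is to establish the two set inclusions separately, in each case reducing membership in $\Gamma_P(G)$ to the explicit description of $\del\|\cdot\|_*$ furnished by Fact~\ref{fact:subdiff-nucnorm}. The first step is to notice that the block matrix displayed in the statement collapses to $\bar{U}_1 Z \bar{V}_1^T$, since only its top-left block is nonzero; thus the claim is equivalent to
$$ \Gamma_P(G) = \left\{ \bar{U}_1 Z \bar{V}_1^T \mid Z \in \mathbb{S}_+^{\bar{s}} \right\}. $$
I would also record at the outset the spectral fact that will drive the argument: because the singular values collected in $\bar{\Sigma}_{(0,1)}$ lie strictly inside $(0,1)$ and the remaining ones are $0$, the eigenspace of $(-G)(-G)^T$ for the eigenvalue $1$ is exactly $\range(\bar{U}_1)$, and likewise the eigenspace of $(-G)^T(-G)$ for eigenvalue $1$ is $\range(\bar{V}_1)$.

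For the inclusion $\supseteq$, I would take $X = \bar{U}_1 Z \bar{V}_1^T$ with $Z \succeq \bz$, diagonalize $Z = Q\Lambda Q^T$ with $Q\in\Or^{\bar{s}}$ and $\Lambda=\diag(\lambda_1,\ldots,\lambda_{\bar s})$ ordered so that $\lambda_1\ge\cdots\ge\lambda_r>0=\lambda_{r+1}=\cdots$, where $r=\mathrm{rank}(Z)$. The leading $r$ columns of $\bar{U}_1 Q$ and $\bar{V}_1 Q$ then serve as the singular vectors $U_+,V_+$ in an SVD of $X$. Setting $\tilde{U}=\bar{U}\,{\rm BlkDiag}(Q,I)$ and $\tilde{V}=\bar{V}\,{\rm BlkDiag}(Q,I)$ (the identity blocks of the appropriate sizes), a direct computation gives $\tilde{U}^T(-G)\tilde{V}=D$, where $D$ is the diagonal block matrix in~\eqref{eq:sig-val-dec-G}; hence $(\tilde{U},\tilde{V})$ is again an SVD pair of $-G$. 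Grouping its columns according to $r$ exhibits $-G$ in precisely the shape required by Fact~\ref{fact:subdiff-nucnorm} relative to this SVD of $X$, with trailing block $W={\rm BlkDiag}(I_{\bar{s}-r},\bar{\Sigma}_{(0,1)},\bz)$ satisfying $\|W\|\le 1$. This yields $-G\in\del\|X\|_*$.

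For the converse $\subseteq$, I would start from $-G\in\del\|X\|_*$ and apply Fact~\ref{fact:subdiff-nucnorm} to a fixed SVD $X=U_+\Sigma_+(X)V_+^T$ of rank $r$, obtaining some $W$ with $\|W\|\le 1$ and the representation $-G=[U_+\ U_0]\,{\rm BlkDiag}(I_r,W)\,[V_+\ V_0]^T$. From this one reads off $-G\,V_+=U_+$ and $(-G)^T U_+=V_+$, so that $(-G)(-G)^T U_+=U_+$; by the spectral fact above this forces $\range(U_+)\subseteq\range(\bar{U}_1)$ and, symmetrically, $\range(V_+)\subseteq\range(\bar{V}_1)$. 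Writing $U_+=\bar{U}_1 A$ and $V_+=\bar{V}_1 B$ with $A^TA=B^TB=I_r$, I would then recompute $-G\,V_+$ through the SVD~\eqref{eq:sig-val-dec-G} of $-G$ to get $-G\,V_+=\bar{U}_1 B$; comparing with $-G\,V_+=U_+=\bar{U}_1 A$ and using the injectivity of $\bar{U}_1$ gives $A=B$. Consequently $X=\bar{U}_1\big(A\Sigma_+(X)A^T\big)\bar{V}_1^T$, and $Z:=A\Sigma_+(X)A^T$ is symmetric and positive semidefinite, which completes the inclusion.

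The main obstacle is precisely the symmetry and positive semidefiniteness of $Z$ in the $\subseteq$ direction. Membership in $\Gamma_P(G)$ quickly pins down the column and row spaces of $X$ inside $\range(\bar{U}_1)$ and $\range(\bar{V}_1)$, but a generic matrix with these spaces would only furnish $Z=A\Sigma_+(X)B^T$ with possibly $A\neq B$. The identity $A=B$—forced by reading the single compatibility relation $-G\,V_+=U_+$ through two different representations of $-G$—is what upgrades $Z$ to an element of $\mathbb{S}_+^{\bar{s}}$, and it is the one place where the detailed structure of $-G$, rather than merely its singular subspaces, is used. I would handle the degenerate cases $\bar{s}=0$ (which forces $X=\bz$) and $r=0$ uniformly inside this argument, since they are automatically subsumed once $A$ and $B$ are permitted to have no columns.
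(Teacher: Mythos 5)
Your proof is correct, and while your $\supseteq$ direction coincides with the paper's (diagonalize $Z=Q\Lambda Q^T$, reassemble an SVD of $X$ from $\bar U_1Q,\bar V_1Q$, and exhibit $-G$ in the form of Fact~\ref{fact:subdiff-nucnorm} with trailing contraction $W={\rm BlkDiag}(I_{\bar s-r},\bar\Sigma_{(0,1)},\bz)$), your $\subseteq$ direction is genuinely different. The paper builds an alternative SVD of $-G$ out of the SVD of $X$ (by decomposing the contraction block $W'$), then invokes Fact~\ref{fact:diff-svd} on the non-uniqueness of SVDs: two SVD pairs of $-G$ must differ by block-diagonal orthogonal factors sharing the \emph{same} leading block $Q$, and it is this shared $Q$ that makes the resulting $Z=Q\,{\rm BlkDiag}(\Sigma_+(X),\bz)\,Q^T$ symmetric positive semidefinite. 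You bypass that fact entirely: from $-G\,V_+=U_+$ and $(-G)^TU_+=V_+$ you place $\range(U_+)$ and $\range(V_+)$ inside the eigenvalue-one eigenspaces $\range(\bar U_1)$ and $\range(\bar V_1)$ of $(-G)(-G)^T$ and $(-G)^T(-G)$, write $U_+=\bar U_1A$, $V_+=\bar V_1B$, and then force $A=B$ by computing $-G\,V_+$ through the SVD~\eqref{eq:sig-val-dec-G} and using injectivity of $\bar U_1$ — exactly the point where symmetry of $Z=A\Sigma_+(X)A^T$ is secured. Your route is more elementary and self-contained (no appeal to the SVD-uniqueness structure of~\cite[Proposition 5]{DST14}), which is a real simplification for this proposition taken in isolation; the paper's route costs nothing extra in context, since Fact~\ref{fact:diff-svd} is needed again anyway in its proof of metric sub-regularity (Proposition~\ref{prop:nucnorm-metric-subreg}). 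Your handling of the degenerate cases $\bar s=0$ and $r=0$ via empty-column matrices is also sound.
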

\begin{proof}
Let $Z\in\mathbb{S}_+^{\bar{s}}$ be arbitrary and $Z=Q\Lambda Q^T$ be its spectral decomposition, where $Q\in\Or^{\bar{s}}$ is orthogonal and $\Lambda\in\mathbb{S}^{\bar{s}}$ is diagonal.  Consider the matrix
$$ X = \begin{bmatrix}
\bar{U}_1 & \bar{U}_{(0,1)} & \bar{U}_0
\end{bmatrix}\begin{bmatrix}
Z & \mathbf{0} & \mathbf{0} \\
\mathbf{0} & \mathbf{0} & \mathbf{0} \\
\mathbf{0} & \mathbf{0} & \mathbf{0}
\end{bmatrix}\begin{bmatrix}
\bar{V}_1 & \bar{V}_{(0,1)} & \bar{V}_0
\end{bmatrix}^T. $$
Since $Z\succeq\bz$, the diagonal entries of $\Lambda$ are non-negative.  It follows that
$$ X = \begin{bmatrix}
\bar{U}_1Q & \bar{U}_{(0,1)} & \bar{U}_0
\end{bmatrix}\begin{bmatrix}
\Lambda & \mathbf{0} & \mathbf{0} \\
\mathbf{0} & \mathbf{0} & \mathbf{0} \\
\mathbf{0} & \mathbf{0} & \mathbf{0}
\end{bmatrix}\begin{bmatrix}
\bar{V}_1Q & \bar{V}_{(0,1)} & \bar{V}_0
\end{bmatrix}^T $$ 
is an SVD of $X$.  This, together with Fact~\ref{fact:subdiff-nucnorm} and the fact that $\|\bar{\Sigma}_{(0,1)}\| < 1$, implies 
$$ \begin{bmatrix} \bar{U}_1Q & \bar{U}_{(0,1)} & \bar{U}_0
\end{bmatrix}\begin{bmatrix}
I_{\bar{s}} & \mathbf{0} & \mathbf{0} \\
\mathbf{0} & \bar{\Sigma}_{(0,1)} & \bz \\
\mathbf{0} & \bz & \bz
\end{bmatrix}\begin{bmatrix}
\bar{V}_1Q & \bar{V}_{(0,1)} & \bar{V}_0
\end{bmatrix}^T  \in \del\|X\|_*. $$
Upon observing that $QQ^T=I_{\bar{s}}$ and using~\eqref{eq:sig-val-dec-G}, we conclude that $-G\in\del\|X\|_*$, or equivalently, $X\in\Gamma_P(G)$, as desired.

Conversely, let $X\in\R^{m\times n}$ be such that $X \in \Gamma_P(G)$.  Suppose that $\mbox{rank}(X)=r$ and $X$ admits the SVD~\eqref{eq:svd-std}.  Since $-G\in\del\|X\|_*$, Fact~\ref{fact:subdiff-nucnorm} implies the existence of a matrix $W'\in\R^{(m-r)\times(n-r)}$ with $\|W'\|\le1$ such that
$$ -G = \begin{bmatrix} U_+ & U_0 \end{bmatrix} \begin{bmatrix} I_r & \bz \\ \bz & W' \end{bmatrix} \begin{bmatrix} V_+ & V_0 \end{bmatrix}^T. $$
Note that since $\|W'\|\le1$, we have $\sigma_i(W') \in (0,1]$ for $i=1,\ldots,r'$, where $r'=\mbox{rank}(W')$. Now, let
$$ W' = U' \begin{bmatrix} \Sigma_+(W') & \bz \\ \bz & \bz \end{bmatrix} (V')^T $$
be an SVD of $W'$, where $U'\in\Or^{m-r}$, $V'\in\Or^{n-r}$ are orthogonal and $\Sigma_+(W')=\mbox{Diag}(\sigma_1(W'),\ldots,\sigma_{r'}(W'))\in\Sy^{r'}$ is diagonal.  Then, we have the following alternative SVD of $-G$:
\begin{equation} \label{eq:alt-svd-G}
-G = \begin{bmatrix} U_+ & U_0U' \end{bmatrix} \begin{bmatrix} I_r & \bz & \bz \\ \bz & \Sigma_+(W') & \bz \\ \bz & \bz & \bz \end{bmatrix} \begin{bmatrix} V_+ & V_0V' \end{bmatrix}^T.
\end{equation}
Upon comparing~\eqref{eq:sig-val-dec-G} and~\eqref{eq:alt-svd-G} and noting that $I_{\bar{r}-\bar{s}} \succ \bar{\Sigma}_{(0,1)}$ and $I_{r'} \succeq \Sigma_+(W')$, we have $r\le\bar{s}$ and
\begin{eqnarray*}
\left( \begin{bmatrix} \bar{U}_1 & \bar{U}_{(0,1)} & \bar{U}_0 \end{bmatrix}, \begin{bmatrix} \bar{V}_1 & \bar{V}_{(0,1)} & \bar{V}_0 \end{bmatrix} \right) &\in& \Xi(-G), \\
\noalign{\smallskip}
\left( \begin{bmatrix} U_+ & U_0U' \end{bmatrix}, \begin{bmatrix} V_+ & V_0V' \end{bmatrix} \right) &\in& \Xi(-G).
\end{eqnarray*}
By Fact~\ref{fact:diff-svd}, there exist orthogonal matrices $Q\in\Or^{\bar{s}}$, $Q'\in\Or^{m-\bar{s}}$, and $Q''\in\Or^{n-\bar{s}}$ such that
\begin{eqnarray}
\begin{bmatrix} Q & \bz \\ \bz & Q' \end{bmatrix} &=& \begin{bmatrix} \bar{U}_1 & \bar{U}_{(0,1)} & \bar{U}_0 \end{bmatrix}^T\begin{bmatrix} U_+ & U_0U' \end{bmatrix}, \label{eq:G-svd-1} \\
\noalign{\medskip}
\begin{bmatrix} Q & \bz \\ \bz & Q'' \end{bmatrix} &=&  \begin{bmatrix} \bar{V}_1 & \bar{V}_{(0,1)} & \bar{V}_0 \end{bmatrix}^T\begin{bmatrix} V_+ & V_0V' \end{bmatrix}. \label{eq:G-svd-2}
\end{eqnarray}
In particular, we have
\begin{eqnarray}
&& \begin{bmatrix} \bar{U}_1 & \bar{U}_{(0,1)} & \bar{U}_0 \end{bmatrix}^TX\begin{bmatrix} \bar{V}_1 & \bar{V}_{(0,1)} & \bar{V}_0 \end{bmatrix} \nonumber \\
\noalign{\medskip}
&=& \begin{bmatrix} \bar{U}_1 & \bar{U}_{(0,1)} & \bar{U}_0 \end{bmatrix}^T \begin{bmatrix} U_+ & U_0 \end{bmatrix} \begin{bmatrix} \Sigma_+(X) & \bz \\ \bz & \bz \end{bmatrix} \begin{bmatrix} V_+ & V_0 \end{bmatrix}^T\begin{bmatrix} \bar{V}_1 & \bar{V}_{(0,1)} & \bar{V}_0 \end{bmatrix} \label{eq:X-diag-1} \\
\noalign{\medskip}
&=& \begin{bmatrix} Q & \bz \\ \bz & Q' \end{bmatrix} \begin{bmatrix} \Sigma_+(X) & \bz \\ \bz & \bz \end{bmatrix} \begin{bmatrix} Q & \bz \\ \bz & Q'' \end{bmatrix}^T, \label{eq:X-diag-2}
\end{eqnarray}
where~\eqref{eq:X-diag-1} follows from the SVD of $X$ in~\eqref{eq:svd-std};~\eqref{eq:X-diag-2} follows from~\eqref{eq:G-svd-1},~\eqref{eq:G-svd-2}, and the fact that
\begin{align*}
\begin{bmatrix} U_+ & U_0 \end{bmatrix} \begin{bmatrix} \Sigma_+(X) & \bz \\ \bz & \bz \end{bmatrix} \begin{bmatrix} V_+ & V_0 \end{bmatrix}^T &= \begin{bmatrix} U_+ & U_0U' \end{bmatrix} \begin{bmatrix} I_r & \bz \\ \bz & (U')^T \end{bmatrix} \begin{bmatrix} \Sigma_+(X) & \bz \\ \bz & \bz \end{bmatrix} \begin{bmatrix} I_r & \bz \\ \bz & V' \end{bmatrix} \begin{bmatrix} V_+ & V_0V' \end{bmatrix}^T \\
\noalign{\medskip}
&= \begin{bmatrix} U_+ & U_0U' \end{bmatrix} \begin{bmatrix} \Sigma_+(X) & \bz \\ \bz & \bz \end{bmatrix} \begin{bmatrix} V_+ & V_0V' \end{bmatrix}^T.
\end{align*}
Note that $Q\in\Or^{\bar{s}}$ and $\Sigma_+(X)\in\Sy_{++}^r$.  Hence, we cannot multiply $Q$ and $\Sigma_+(X)$ directly. Nevertheless, since $r\le\bar{s}$, we can still expand~\eqref{eq:X-diag-2} to obtain
$$ X = \begin{bmatrix}
\bar{U}_1 & \bar{U}_{(0,1)} & \bar{U}_0
\end{bmatrix}\begin{bmatrix}
Z & \mathbf{0} & \mathbf{0} \\
\mathbf{0} & \mathbf{0} & \mathbf{0} \\
\mathbf{0} & \mathbf{0} & \mathbf{0}
\end{bmatrix}\begin{bmatrix}
\bar{V}_1 & \bar{V}_{(0,1)} & \bar{V}_0
\end{bmatrix}^T $$
for some $Z\in\Sy_+^{\bar{s}}$. This completes the proof.
\end{proof}

\subsubsection{Metric Sub-Regularity of $\del P$}
Our next task is to show that the subdifferential of the nuclear norm, $\del\|\cdot\|_*$, is metrically sub-regular at any $X\in\R^{m\times n}$ for any $-G\in\R^{m\times n}$ such that $(X,-G)\in\mbox{gph}(\del\|\cdot\|_*)$; \ie, for any $(X_0,-G_0)\in\mbox{gph}(\del\|\cdot\|_*)$, there exist constants $\kappa,\epsilon>0$ such that
\begin{equation} \label{eq:nucnorm-metric-subreg}
  d\left( X,(\del\|\cdot\|_*)^{-1}(-G_0) \right) \le \kappa \cdot d\left( -G_0,\del\|X\|_* \right) \quad\mbox{for all } X \in \mathbb{B}_{\R^{m\times n}}(X_0,\epsilon).
\end{equation}
This result, which could be of independent interest, is crucial to understanding the validity of the error bound~\eqref{eq:err-bd} for Problem~\eqref{eq:str-cvx-prob} when $P$ is the nuclear norm regularizer.  Note that by a standard argument (see, \eg,~\cite[Exercise 3H.4]{dontchev2009implicit}), it suffices to establish the existence of constants $\kappa_0,\epsilon_0,\delta_0>0$ such that
\begin{equation} \label{eq:local-nucnorm-metric-subreg} 
d\left( X,(\del\|\cdot\|_*)^{-1}(-G_0) \right) \le \kappa_0 \cdot d\left( -G_0, \del\|X\|_*\cap\mathbb{B}_{\R^{m\times n}}(-G_0,\delta_0) \right) \quad\mbox{for all } X \in \mathbb{B}_{\R^{m\times n}}(X_0,\epsilon_0). 
\end{equation}
Towards that end, let $(X_0,-G_0)\in\mbox{gph}(\del\|\cdot\|_*)$ and $\epsilon_0>0$ be arbitrary, with $\mbox{rank}(-G_0)=\tilde{r}$.  Since $-G_0\in\del\|X_0\|_*$, Fact~\ref{fact:subdiff-nucnorm} implies the existence of an integer $\tilde{s}\in\{1,\ldots,\tilde{r}\}$ such that $\sigma_i(-G_0)=1$ for $i=1,\ldots,\tilde{s}$; $\sigma_i(-G_0)\in(0,1)$ for $i=\tilde{s}+1,\ldots,\tilde{r}$; $\sigma_i(-G_0)=0$ for $i=\tilde{r}+1,\ldots,m$. Hence, we may express any SVD of $-G_0$ as
$$
-G_0 = \begin{bmatrix}
\tilde{U}_1 & \tilde{U}_{(0,1)} & \tilde{U}_0
\end{bmatrix}\begin{bmatrix}
I_{\tilde{s}} & \mathbf{0} & \mathbf{0} \\
\mathbf{0} & \tilde{\Sigma}_{(0,1)} & \mathbf{0} \\
\mathbf{0} & \mathbf{0} & \mathbf{0}
\end{bmatrix}\begin{bmatrix}
\tilde{V}_1 & \tilde{V}_{(0,1)} & \tilde{V}_0
\end{bmatrix}^T,
$$
where 
$\begin{bmatrix} \tilde{U}_1 & \tilde{U}_{(0,1)} & \tilde{U}_0 \end{bmatrix} \in \Or^m$ is orthogonal with $\tilde{U}_1\in\mathbb{R}^{m\times \tilde{s}}$, $\tilde{U}_{(0,1)}\in\mathbb{R}^{m\times (\tilde{r} - \tilde{s})}$, and $\tilde{U}_0\in\mathbb{R}^{m\times (m-\tilde{r})}$; $\begin{bmatrix} \tilde{V}_1 & \tilde{V}_{(0,1)} & \tilde{V}_0 \end{bmatrix} \in \Or^n$ is orthogonal with $\tilde{V}_1\in\mathbb{R}^{n\times \tilde{s}}$, $\tilde{V}_{(0,1)}\in\mathbb{R}^{n\times (\tilde{r} - \tilde{s})}$, and $\tilde{V}_0\in\mathbb{R}^{n\times (n-\tilde{r})}$; $\tilde{\Sigma}_{(0,1)} = \mbox{Diag}\left( \sigma_{\tilde{s}+1}(-G),\ldots,\sigma_{\tilde{r}}(-G) \right) \in \mathbb{S}^{\tilde{r}-\tilde{s}}$ is diagonal.  Now, let $\gamma_0,\delta_0>0$ be the constants that guarantee the property stated in Fact~\ref{fact:svec-ulc} holds at $-G_0$.  Consider a matrix $X\in\R^{m\times n}$ with $\|X-X_0\|_F\le\epsilon_0$. The inequality~\eqref{eq:local-nucnorm-metric-subreg} trivially holds if $\del\|X\|_*\cap\mathbb{B}_{\R^{m\times n}}(-G_0,\delta_0)=\emptyset$.  Hence, suppose that there exists a $-G\in\del\|X\|_*\cap\mathbb{B}_{\R^{m\times n}}(-G_0,\delta_0)$, whose SVD is given by~\eqref{eq:sig-val-dec-G}. In particular, we have $\sigma_i(-G)=1$ for $i=1,\ldots,\bar{s}$; $\sigma_i(-G)\in(0,1)$ for $i=\bar{s}+1,\ldots,\bar{r}$; $\sigma_i(-G)=0$ for $i=\bar{r}+1,\ldots,m$, where $\bar{r}=\mbox{rank}(-G)$. In view of Fact~\ref{fact:sv-lip}, we may, by adjusting $\delta_0>0$ if necessary, assume that $\sigma_i(-G)\in(0,1)$ for $i=\tilde{s}+1,\ldots,\tilde{r}$.  Consequently, we have $\bar{s}\le\tilde{s}$ and $\bar{r}\ge\tilde{r}$.

To proceed, let
\begin{eqnarray*}
&& \bar{U} = \begin{bmatrix} \bar{U}_1 & \bar{U}_{(0,1)} & \bar{U}_0 \end{bmatrix}, \quad\bar{V} = \begin{bmatrix} \bar{V}_1 & \bar{V}_{(0,1)} & \bar{V}_0 \end{bmatrix}^T, \\
\noalign{\medskip}
&& \tilde{U} = \begin{bmatrix} \tilde{U}_1 & \tilde{U}_{(0,1)} & \tilde{U}_0 \end{bmatrix}, \quad \tilde{V} = \begin{bmatrix} \tilde{V}_1 & \tilde{V}_{(0,1)} & \tilde{V}_0 \end{bmatrix}^T.
\end{eqnarray*}
Since $X\in\Gamma_P(G)$ and $X_0\in\Gamma_P(G_0)$, Proposition~\ref{prop:x-in-sub} implies the existence of positive semidefinite matrices $\bar{Z}\in\Sy_+^{\bar{s}}$ and $\tilde{Z}\in\Sy_+^{\tilde{s}}$ such that
\begin{equation} \label{eq:psd-decomp}
X = \bar{U} \begin{bmatrix} \bar{Z} & \mathbf{0} & \mathbf{0} \\ \mathbf{0} & \mathbf{0} & \mathbf{0} \\ \mathbf{0} & \mathbf{0} & \mathbf{0} \end{bmatrix} \bar{V}^T \quad\mbox{and}\quad
X_0 = \tilde{U} \begin{bmatrix} \tilde{Z} & \mathbf{0} & \mathbf{0} \\ \mathbf{0} & \mathbf{0} & \mathbf{0} \\ \mathbf{0} & \mathbf{0} & \mathbf{0} \end{bmatrix} \tilde{V}^T.
\end{equation}
Moreover, since $\|G-G_0\|_F \le \delta_0$, by Fact~\ref{fact:svec-ulc}, there exists a pair $(\tilde{U}^\star,\tilde{V}^\star)\in\Xi(-G_0)$ such that
\begin{equation} \label{eq:nearby-ortho}
  \| (\tilde{U}^\star,\tilde{V}^\star) - (\bar{U},\bar{V}) \|_F \le \gamma_0 \|G-G_0\|_F.
\end{equation}
Upon recall that $(\tilde{U},\tilde{V})\in\Xi(-G_0)$ and invoking Fact~\ref{fact:diff-svd}, we obtain
$$ \tilde{U}^T\tilde{U}^\star = \begin{bmatrix} Q & \bz \\ \bz & Q' \end{bmatrix} \quad\mbox{and}\quad \tilde{V}^T\tilde{V}^\star = \begin{bmatrix} Q & \bz \\ \bz & Q'' \end{bmatrix} $$
for some orthogonal matrices $Q\in\Or^{\tilde{s}}$, $Q'\in\Or^{m-\tilde{s}}$, and $Q''\in\Or^{n-\tilde{s}}$.  Now, consider the matrix
$$ X_0^\star = \tilde{U}^\star \begin{bmatrix} \bar{Z} & \bz & \bz \\ \bz & \bz & \bz \\ \bz & \bz & \bz \end{bmatrix} ( \tilde{V}^\star )^T \in \R^{m\times n}. $$
Observe that since $\bar{s}\le\tilde{s}$, we have
\begin{eqnarray*}
\tilde{U}^TX_0^\star\tilde{V} &=& \tilde{U}^T \tilde{U}^\star \begin{bmatrix} \bar{Z} & \bz \\ \bz & \bz \end{bmatrix} ( \tilde{V}^\star )^T \tilde{V} \\
\noalign{\medskip}
&=& \begin{bmatrix} Q & \bz \\ \bz & Q' \end{bmatrix} \begin{bmatrix} \bar{Z} & \bz \\ \bz & \bz \end{bmatrix} \begin{bmatrix} Q & \bz \\ \bz & Q'' \end{bmatrix}^T \\
\noalign{\medskip}
&=& \begin{bmatrix} \bar{Z}^\star & \bz & \bz \\ \bz & \bz & \bz \\ \bz & \bz & \bz \end{bmatrix}
\end{eqnarray*}
for some $\bar{Z}^\star\in\Sy_+^{\tilde{s}}$.  This, together with Proposition~\ref{prop:x-in-sub}, implies that $X_0^\star\in\Gamma_P(G_0)$. Hence, by setting $\kappa_0=\sqrt{2} \gamma_0 \left( \|X_0\|_F + \epsilon_0 \right)$, we have
\begin{eqnarray}
d\left( X,(\del\|\cdot\|_*)^{-1}(-G_0) \right) &=& d(X,\Gamma_P(G_0)) \nonumber \\
\noalign{\medskip}
&\le& \|X-X_0^\star\|_F \nonumber \\
\noalign{\medskip}
&=& \left\| \bar{U} \begin{bmatrix} \bar{Z} & \bz & \bz \\ \bz & \bz & \bz \\ \bz & \bz & \bz \end{bmatrix} \bar{V}^T - \tilde{U}^\star \begin{bmatrix} \bar{Z} & \bz & \bz \\ \bz & \bz & \bz \\ \bz & \bz & \bz \end{bmatrix} (\tilde{V}^\star)^T \right\|_F \nonumber \\
\noalign{\medskip}
&\le& \sqrt{2} \cdot \|X\|_F \cdot \| (\tilde{U}^\star,\tilde{V}^\star) - (\bar{U},\bar{V}) \|_F \label{eq:dist-subdiff-1} \\
\noalign{\medskip}
&\le& \kappa_0 \cdot \|G-G_0\|_F, \label{eq:dist-subdiff-2}
\end{eqnarray}
where~\eqref{eq:dist-subdiff-1} follows from~\eqref{eq:psd-decomp} and the fact that for any matrix $A\in\R^{m\times n}$ and orthogonal matrices $U_1,U_2\in\Or^m$, $V_1,V_2\in\Or^n$,
\begin{eqnarray*}
  \|U_1AV_1^T-U_2AV_2^T\|_F &=& \| U_1AV_1^T - U_1AV_2^T + U_1AV_2^T - U_2AV_2^T \|_F \\
  &\le& \|A(V_1-V_2)^T\|_F + \|(U_1-U_2)A\|_F \\
  &\le& \sqrt{2} \cdot \|A\|_F \cdot \|(U_1,V_1)-(U_2,V_2)\|_F;
\end{eqnarray*}
\eqref{eq:dist-subdiff-2} follows from~\eqref{eq:nearby-ortho} and the fact that $\|X-X_0\|_F\le\epsilon_0$. Since the above inequality holds for arbitrary $X\in\mathbb{B}_{\R^{m\times n}}(X_0,\epsilon_0)$ and $-G\in\del\|X\|_*\cap\mathbb{B}_{\R^{m\times n}}(-G_0,\delta_0)$, we conclude that~\eqref{eq:local-nucnorm-metric-subreg} holds. Thus, we obtain the following proposition, which constitutes the second main result of this paper:
\begin{prop} \label{prop:nucnorm-metric-subreg}
The multi-function $\del\|\cdot\|_*:\R^{m\times n}\rightrightarrows\R^{m\times n}$ is metrically sub-regular at any $X\in\R^{m\times n}$ for any $G\in\R^{m\times n}$ such that $(X,G)\in{\rm gph}(\del\|\cdot\|_*)$.
\end{prop}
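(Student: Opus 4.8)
The plan is to establish the metric sub-regularity inequality~\eqref{eq:nucnorm-metric-subreg} at an arbitrary base point $(X_0,-G_0)\in\mathrm{gph}(\del\|\cdot\|_*)$ by first reducing it to a purely local statement and then exhibiting, for each $X$ near $X_0$ and each subgradient $-G\in\del\|X\|_*$ near $-G_0$, a nearby point of $(\del\|\cdot\|_*)^{-1}(-G_0)$. For the reduction I would invoke the standard fact (\eg~\cite[Exercise 3H.4]{dontchev2009implicit}) that metric sub-regularity need only be checked against subgradients lying in a small ball $\mathbb{B}_{\R^{m\times n}}(-G_0,\delta_0)$: if $d(-G_0,\del\|X\|_*)$ is small, then the minimizing subgradient is automatically within $\delta_0$ of $-G_0$. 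Thus it suffices to produce constants $\kappa_0,\epsilon_0,\delta_0>0$ for which the localized bound~\eqref{eq:local-nucnorm-metric-subreg} holds.

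The heart of the argument is the explicit description of the relevant sets furnished by Proposition~\ref{prop:x-in-sub}. Fix $X$ with $\|X-X_0\|_F\le\epsilon_0$ and a subgradient $-G\in\del\|X\|_*\cap\mathbb{B}_{\R^{m\times n}}(-G_0,\delta_0)$, and write SVDs of $-G$ and $-G_0$ as in~\eqref{eq:sig-val-dec-G}, producing orthogonal factors $(\bar U,\bar V)\in\Xi(-G)$ and $(\tilde U,\tilde V)\in\Xi(-G_0)$. By Proposition~\ref{prop:x-in-sub}, the memberships $X\in\Gamma_P(G)$ and $X_0\in\Gamma_P(G_0)$ force $X=\bar U\,\mathrm{diag}(\bar Z,\bz,\bz)\,\bar V^T$ and $X_0=\tilde U\,\mathrm{diag}(\tilde Z,\bz,\bz)\,\tilde V^T$ for some $\bar Z\in\Sy_+^{\bar s}$, $\tilde Z\in\Sy_+^{\tilde s}$, where $\bar s,\tilde s$ are the multiplicities of the singular value $1$ in $-G,-G_0$. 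The singular-value perturbation bound (Fact~\ref{fact:sv-lip}) guarantees, after shrinking $\delta_0$, the size inequality $\bar s\le\tilde s$, which is precisely what will let the block $\bar Z$ fit into a feasible point at $-G_0$.

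Next I would use the Lipschitz stability of the SVD factors (Fact~\ref{fact:svec-ulc}) to select a pair $(\tilde U^\star,\tilde V^\star)\in\Xi(-G_0)$ with $\|(\tilde U^\star,\tilde V^\star)-(\bar U,\bar V)\|_F\le\gamma_0\|G-G_0\|_F$, and then \emph{transport} the block $\bar Z$ along these nearby factors by defining the candidate $X_0^\star:=\tilde U^\star\,\mathrm{diag}(\bar Z,\bz,\bz)\,(\tilde V^\star)^T$. The main obstacle is to check that $X_0^\star$ genuinely lies in $(\del\|\cdot\|_*)^{-1}(-G_0)=\Gamma_P(G_0)$, and this is where Fact~\ref{fact:diff-svd} enters. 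Comparing $(\tilde U^\star,\tilde V^\star)$ with the reference pair $(\tilde U,\tilde V)$ yields block-diagonal orthogonal matrices relating the two SVDs of $-G_0$; conjugating $\mathrm{diag}(\bar Z,\bz,\bz)$ by these blocks---using $\bar s\le\tilde s$ so that the embedding is well defined---produces $\tilde U^T X_0^\star\tilde V=\mathrm{diag}(\bar Z^\star,\bz,\bz)$ with $\bar Z^\star\in\Sy_+^{\tilde s}$, whereupon Proposition~\ref{prop:x-in-sub} certifies $X_0^\star\in\Gamma_P(G_0)$.

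Finally I would estimate the distance. Since $d(X,\Gamma_P(G_0))\le\|X-X_0^\star\|_F$ and $X,X_0^\star$ differ only through the orthogonal factors carrying the common block $\bar Z$, the elementary inequality $\|U_1AV_1^T-U_2AV_2^T\|_F\le\sqrt2\,\|A\|_F\,\|(U_1,V_1)-(U_2,V_2)\|_F$ together with~\eqref{eq:nearby-ortho} gives $\|X-X_0^\star\|_F\le\sqrt2\,\|X\|_F\,\gamma_0\|G-G_0\|_F\le\kappa_0\|G-G_0\|_F$ with $\kappa_0=\sqrt2\,\gamma_0(\|X_0\|_F+\epsilon_0)$. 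As $-G$ was an arbitrary subgradient in the small ball, minimizing over it yields~\eqref{eq:local-nucnorm-metric-subreg}, and the reduction above upgrades this to the claimed metric sub-regularity. I expect the delicate bookkeeping in the membership verification of $X_0^\star$---keeping the index ranges $\bar s,\tilde s,\bar r,\tilde r$ mutually consistent and handling the SVD non-uniqueness via Fact~\ref{fact:diff-svd}---to be the step most prone to error.
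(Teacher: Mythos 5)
Your proposal is correct and follows essentially the same route as the paper's proof: the same localization via~\cite[Exercise 3H.4]{dontchev2009implicit}, the same use of Proposition~\ref{prop:x-in-sub} to represent $X$ and $X_0$, the ordering $\bar{s}\le\tilde{s}$ from Fact~\ref{fact:sv-lip}, the transported candidate $X_0^\star=\tilde{U}^\star\,\mathrm{diag}(\bar{Z},\bz,\bz)\,(\tilde{V}^\star)^T$ built from Fact~\ref{fact:svec-ulc}, the membership check via Fact~\ref{fact:diff-svd}, and even the same constant $\kappa_0=\sqrt{2}\,\gamma_0(\|X_0\|_F+\epsilon_0)$. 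No gaps to report.
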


\subsubsection{Validity of the Error Bound~\eqref{eq:err-bd}} \label{subsubsec:nucnorm-eb}
Theorem~\ref{thm:two-conditions} and Proposition~\ref{prop:nucnorm-metric-subreg} imply that in order to establish the error bound~\eqref{eq:err-bd} for the nuclear norm-regularized problem~\eqref{eq:str-cvx-prob}, it suffices to show that the collection $\mathcal{C}=\{\Gamma_f(\bar{y}),\Gamma_P(\bar{G})\}$, where $\bar{y}=\mathcal{A}(X)$ and $\bar{G}=\nabla f(X)$ for any $X\in\mathcal{X}$ (recall Proposition~\ref{prop:opt-invariant}), is boundedly linearly regular. Since Proposition~\ref{prop:x-in-sub} suggests that the set $\Gamma_P(\bar{G})$ is not polyhedral in general, we can invoke Fact~\ref{fact:linear-regular} to conclude that the collection $\mathcal{C}$ is boundedly linearly regular if the regularity condition $\Gamma_f(\bar{y}) \cap \mbox{ri}(\Gamma_P(\bar{G})) \not= \emptyset$ holds. However, such condition is not entirely satisfactory, as it reveals very little about the structure of the optimal solution set $\mathcal{X}$. This motivates us to develop an alternative regularity condition, which leads to the third main result of this paper:

%In view of Theorem~\ref{thm:two-conditions} and Proposition~\ref{prop:nucnorm-metric-subreg}, the validity of the error bound~\eqref{eq:err-bd} for the nuclear norm-regularized problem~\eqref{eq:str-cvx-prob} would follow from the bounded linear regularity of the collection $\mathcal{C}=\{\Gamma_f(\bar{y}),\Gamma_P(\bar{G})\}$, where $\bar{y}=\mathcal{A}(X)$ and $\bar{G}=\nabla f(X)$ for any $X\in\mathcal{X}$ (recall Proposition~\ref{prop:opt-invariant}). As is evident from Proposition~\ref{prop:x-in-sub}, the set $\Gamma_P(\bar{G})$ is not necessarily polyhedral. Thus, certain regularity condition is needed in general to guarantee the bounded linear regularity of $\mathcal{C}$. % Although we know from Fact~\ref{fact:linear-regular} that one such condition is $\Gamma_f(\bar{y}) \cap \mbox{ri}(\Gamma_P(\bar{G})) \not= \emptyset$, it reveals very little about the structure of the optimal solution set $\mathcal{X}$. This motivates us to consider the following alternative condition:

\begin{prop}\label{prop:nucnorm-eb}
Suppose that Problem~\eqref{eq:str-cvx-prob} satisfies Assumptions~\ref{ass:smooth-func} and~\ref{ass:level-bd} with $P$ being the nuclear norm regularizer.  Suppose further that there exists an $X^\star\in\mathcal{X}$ satisfying
\begin{equation}\label{eq:nucnorm-cq}
\bz \in \nabla f(X^{\star}) + {\rm ri}(\del \|X^{\star}\|_*).
\end{equation}
Then, the error bound~\eqref{eq:err-bd} holds.
\end{prop}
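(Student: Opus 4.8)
The plan is to verify, via Theorem~\ref{thm:two-conditions} and Corollary~\ref{cor:orig-eb}, that the solution map $\Gamma$ is calm at $(\bar{y},\bar{G})$ for every $\bar{X}\in\Gamma(\bar{y},\bar{G})=\mathcal{X}$, where $\bar{y}=\mathcal{A}(X)$ and $\bar{G}=\nabla f(X)$ for any $X\in\mathcal{X}$ (recall Proposition~\ref{prop:opt-invariant}). Theorem~\ref{thm:two-conditions} reduces this to conditions (C1) and (C2). Condition (C2)---the metric sub-regularity of $\del\|\cdot\|_*$ at every $\bar{X}\in\mathcal{X}$ for $-\bar{G}$---is already furnished by Proposition~\ref{prop:nucnorm-metric-subreg}, which holds without any regularity assumption. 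Hence the entire burden falls on condition (C1): I must show that the collection $\mathcal{C}=\{\Gamma_f(\bar{y}),\Gamma_P(\bar{G})\}$ is boundedly linearly regular.

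To establish (C1), I would appeal to Fact~\ref{fact:linear-regular}. The set $\Gamma_f(\bar{y})$ is polyhedral (it is the solution set of the linear system $\mathcal{A}(X)=\bar{y}$), while $\Gamma_P(\bar{G})$ is in general non-polyhedral by Proposition~\ref{prop:x-in-sub}. Taking $\Gamma_P(\bar{G})$ as the single non-polyhedral member, Fact~\ref{fact:linear-regular} shows that $\mathcal{C}$ is boundedly linearly regular provided
$$ \Gamma_f(\bar{y}) \cap \mbox{ri}(\Gamma_P(\bar{G})) \neq \emptyset. $$
I will prove that the point $X^\star$ supplied by the regularity condition~\eqref{eq:nucnorm-cq} lies in this intersection.

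The crux of the argument is to translate the strict complementarity condition~\eqref{eq:nucnorm-cq} into the membership $X^\star\in\mbox{ri}(\Gamma_P(\bar{G}))$, and the key is to compute both relative interiors explicitly. On one hand, since $\nabla f(X^\star)=\bar{G}$, condition~\eqref{eq:nucnorm-cq} reads $-\bar{G}\in\mbox{ri}(\del\|X^\star\|_*)$. Writing an SVD of $X^\star$ of rank $r$ and invoking Fact~\ref{fact:subdiff-nucnorm}, the set $\del\|X^\star\|_*$ is an injective affine image of the full-dimensional spectral-norm ball $\{W : \|W\|\le1\}$, so its relative interior is the image of the open ball $\{W:\|W\|<1\}$; a short SVD computation then shows that $-\bar{G}$ admits such a representation with $\|W\|<1$ if and only if the number $\bar{s}$ of unit singular values of $-\bar{G}$ equals $r=\mbox{rank}(X^\star)$. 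On the other hand, by Proposition~\ref{prop:x-in-sub}, $\Gamma_P(\bar{G})=\{\bar{U}_1Z\bar{V}_1^T : Z\in\mathbb{S}_+^{\bar{s}}\}$ is a linearly isomorphic copy of the cone $\mathbb{S}_+^{\bar{s}}$, whence $\mbox{ri}(\Gamma_P(\bar{G}))=\{\bar{U}_1Z\bar{V}_1^T : Z\in\mathbb{S}_{++}^{\bar{s}}\}$; since $X^\star=\bar{U}_1Z^\star\bar{V}_1^T$ for some $Z^\star\in\mathbb{S}_+^{\bar{s}}$ with $\mbox{rank}(Z^\star)=\mbox{rank}(X^\star)$, the membership $X^\star\in\mbox{ri}(\Gamma_P(\bar{G}))$ is equivalent to $Z^\star\succ\bz$, i.e., to $\mbox{rank}(X^\star)=\bar{s}$. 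Thus both the strict complementarity condition and the desired membership are equivalent to the single rank identity $\mbox{rank}(X^\star)=\bar{s}$; pinning down this equivalence through the two relative-interior computations is the main obstacle.

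It then remains only to observe that $X^\star\in\Gamma_f(\bar{y})$: since $X^\star\in\mathcal{X}$, Proposition~\ref{prop:opt-invariant} gives $\mathcal{A}(X^\star)=\bar{y}$. Consequently $X^\star\in\Gamma_f(\bar{y})\cap\mbox{ri}(\Gamma_P(\bar{G}))$, condition (C1) holds by Fact~\ref{fact:linear-regular}, and the error bound~\eqref{eq:err-bd} follows from Theorem~\ref{thm:two-conditions} and Corollary~\ref{cor:orig-eb}.
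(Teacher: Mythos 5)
Your proposal is correct and follows essentially the same route as the paper's own proof: verify (C2) via Proposition~\ref{prop:nucnorm-metric-subreg}, and verify (C1) by showing that the regularity condition~\eqref{eq:nucnorm-cq} forces $\mathrm{rank}(X^\star)=\bar{s}$, hence $X^\star\in\Gamma_f(\bar{y})\cap\mathrm{ri}(\Gamma_P(\bar{G}))$, so that Fact~\ref{fact:linear-regular}, Theorem~\ref{thm:two-conditions}, and Corollary~\ref{cor:orig-eb} combine to give~\eqref{eq:err-bd}. The only difference is that you spell out the two relative-interior computations (the affine-image description of $\mathrm{ri}(\del\|X^\star\|_*)$ and the identification $\mathrm{ri}(\Gamma_P(\bar{G}))=\{\bar{U}_1Z\bar{V}_1^T : Z\in\mathbb{S}_{++}^{\bar{s}}\}$) that the paper leaves implicit, which is a welcome elaboration rather than a departure.
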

\begin{proof}
Recall from~\eqref{eq:optimal-set} that $-\bar{G}\in\del\|X\|_*$ for any $X\in\mathcal{X}$.  Hence, we have $\|-\bar{G}\|\le1$ by Fact~\ref{fact:subdiff-nucnorm}. In particular, we may assume that $-\bar{G}$ admits the SVD~\eqref{eq:sig-val-dec-G}.  Since $X^{\star}\in\mathcal{X}$ satisfies~\eqref{eq:nucnorm-cq}, we have $-\nabla f(X^\star) = -\bar{G} \in \mbox{ri}(\del\|X^{\star}\|_*)$. This, together with~\eqref{eq:sig-val-dec-G} and Fact~\ref{fact:subdiff-nucnorm}, implies that $\mbox{rank}(X^\star)=\bar{s}$. Now, observe that $X^\star\in\Gamma_P(\bar{G})$, as $-\bar{G}\in\del\|X^\star\|_*$. Since $\mbox{rank}(X^\star)=\bar{s}$, Proposition~\ref{prop:x-in-sub} yields $X^\star\in\mbox{ri}(\Gamma_P(\bar{G}))$. Since we also have $X^\star\in\Gamma_f(\bar{y})$, we conclude that $\Gamma_f(\bar{y}) \cap \mbox{ri}(\Gamma_P(\bar{G})) \not=\emptyset$. Hence, by Fact \ref{fact:linear-regular}, the collection $\{\Gamma_f(\bar{y}),\Gamma_P(\bar{G})\}$ is boundedly linearly regular. Upon combining this with Proposition~\ref{prop:nucnorm-metric-subreg} and then invoking Theorem~\ref{thm:two-conditions}, the desired result follows.
\end{proof}

\medskip
To put Proposition~\ref{prop:nucnorm-eb} into perspective, let us make the following remarks:
\begin{enumerate}
\item[\subpb] It is helpful to think of~\eqref{eq:nucnorm-cq} as a strict complementarity condition.  Indeed, suppose that $-\bar{G}=-\nabla f(X^\star)$ admits the SVD~\eqref{eq:sig-val-dec-G} and let $\bar{I}=\begin{bmatrix} \bar{U}_1 & \bar{U}_{(0,1)} & \bar{U}_0 \end{bmatrix}\begin{bmatrix} I_m & \bz \end{bmatrix} \begin{bmatrix} \bar{V}_1 & \bar{V}_{(0,1)} & \bar{V}_0 \end{bmatrix}^T \in \R^{m\times n}$. Then, 
$$%\begin{equation} \label{eq:Igrad}
\bar{I}+\nabla f(X^\star) = 
\begin{bmatrix} \bar{U}_1 & \bar{U}_{(0,1)} & \bar{U}_0 \end{bmatrix}
\begin{bmatrix} 
\bz & \bz & \bz & \bz \\
\bz & I_{\bar{r}-\bar{s}} - \bar{\Sigma}_{(0,1)} & \bz & \bz \\
\bz & \bz & I_{m-\bar{r}} & \bz
\end{bmatrix}
\begin{bmatrix} \bar{V}_1 & \bar{V}_{(0,1)} & \bar{V}_0 \end{bmatrix}^T
$$%\end{equation}
is an SVD of $\bar{I}+\nabla f(X^\star)$.  By Proposition~\ref{prop:x-in-sub}, we may write
\begin{equation} \label{eq:Xstar}
X^\star = \begin{bmatrix}
\bar{U}_1 & \bar{U}_{(0,1)} & \bar{U}_0
\end{bmatrix}\begin{bmatrix}
Z^\star & \mathbf{0} & \mathbf{0} \\
\mathbf{0} & \mathbf{0} & \mathbf{0} \\
\mathbf{0} & \mathbf{0} & \mathbf{0}
\end{bmatrix}\begin{bmatrix}
\bar{V}_1 & \bar{V}_{(0,1)} & \bar{V}_0
\end{bmatrix}^T 
\end{equation}
for some $Z^\star\in\Sy_+^{\bar{s}}$. It follows that $X^\star$ and $\bar{I}+\nabla f(X^\star)$ are complementary in the sense that $\langle X^\star, \bar{I}+\nabla f(X^\star) \rangle=0$.

Now, if the regularity condition~\eqref{eq:nucnorm-cq} holds, then the proof of Proposition~\ref{prop:nucnorm-eb} shows that $X^\star\in\mbox{ri}(\Gamma_P(\bar{G}))$.  In particular, we have $Z^\star\in\Sy_{++}^{\bar{s}}$ in~\eqref{eq:Xstar}. This yields ${\rm rank}( X^\star )+{\rm rank}( \bar{I}+\nabla f(X^\star) )=m$; \ie, $X^\star$ and $\bar{I}+\nabla f(X^\star)$ are strictly complementary.  Conversely, the strict complementarity between $X^\star$ and $\bar{I}+\nabla f(X^\star)$ implies that $Z^\star\in\Sy_{++}^{\bar{s}}$ in~\eqref{eq:Xstar}. Hence, by~\eqref{eq:sig-val-dec-G} and Fact~\ref{fact:subdiff-nucnorm}, we have $-\bar{G}=-\nabla f(X^\star)\in{\rm ri}(\del\|X^\star\|_*)$; \ie, the regularity condition~\eqref{eq:nucnorm-cq} holds.
%$$ 
%\left\langle X^\star(X^\star)^T, \left(\bar{I}+\nabla f(X^\star)\right)\left(\bar{I}+\nabla f(X^\star)\right)^T \right\rangle=0 $$
%and 
%$$ {\rm rank}\left( X^\star(X^\star)^T \right)+{\rm rank}\left( \left(\bar{I}+\nabla f(X^\star)\right) \left(\bar{I}+\nabla f(X^\star)\right)^T \right)=m. $$
%Hence, the matrices $X^\star(X^\star)^T$ and $\left(\bar{I}+\nabla f(X^\star)\right)\left(\bar{I}+\nabla f(X^\star)\right)^T$ are strictly complementary.

\item[\subpb] Using the fact that $\mathcal{X}=\Gamma_f(\bar{y})\cap\Gamma_P(\bar{G})$ and Proposition~\ref{prop:x-in-sub}, we see that $\mathcal{X}$ is the solution set of a linear matrix inequality. As such, it is natural to ask whether the error bound~\eqref{eq:err-bd} in this case follows from existing error bounds for general linear matrix inequalities (see, \eg,~\cite{sturm2000error,AH02}). It turns out that if the decision variable $X$ is a symmetric matrix (\ie, $\mathcal{E}=\mathbb{S}^n$), then it is indeed possible to use the machinery in~\cite{sturm2000error} to establish the error bound~\eqref{eq:err-bd} for Problem~\eqref{eq:str-cvx-prob} under the same regularity condition~\eqref{eq:nucnorm-cq}.  However, the argument is tedious and does not reveal much insight.  Moreover, it is not easy to generalize the argument to handle the case where $X$ is not symmetric or is rectangular. Therefore, we do not pursue such an approach here.
%they are typically concerned with residual functions that are related to the extremal eigenvalue of a certain matrix. As such, it is not clear whether the results in those studies are compatible with ours. An interesting direction for future study would be to investigate the relationship between the error bounds in~\cite{sturm2000error,AH02} and those considered in this paper.
%
%\item[\subpb] In an unpublished manuscript~\cite{PL15}, Pan and Liu show that an error bound similar to~\eqref{eq:err-bd} holds for the nuclear norm-regularized problem~\eqref{eq:str-cvx-prob}, provided that there exists an $X^\star\in\mathcal{X}$ satisfying
\end{enumerate}
\resetspb

In view of Proposition~\ref{prop:nucnorm-eb}, it is natural to ask whether the error bound~\eqref{eq:err-bd} holds without the regularity condition~\eqref{eq:nucnorm-cq}. Unfortunately, the answer is negative in general. To see this, consider the nuclear norm-regularized problem
%In particular, we will construct an example where the error bound of \eqref{eq:nuc-reg} fails because of the lack of \eqref{eq:cons-qualification-1}. 
\begin{equation}\label{eq:counter-ex}
\min_{X\in\R^{2\times 2}} f(X) + \|X\|_{*},
\end{equation}
where $f:\R^{2\times 2}\rightarrow\R$ is the function given by $f(X)=h(\mathcal{A}(X))$, $h:\R^2\rightarrow\R$ is the function given by
$$ 
h(y) = \frac{1}{2} \left\|B^{1/2}y - B^{-1/2}d\right\|_2^2 \quad \mbox{with } B = \begin{bmatrix}
3/2 & -2 \\ -2 & 3 
\end{bmatrix} \succ \bz \mbox{ and } 
d = \begin{bmatrix}
5/2 \\ -1
\end{bmatrix},
$$
and $\mathcal{A}:\R^{2\times 2}\rightarrow\mathbb{R}^2$ is the linear operator given by
$$ \mathcal{A}(X) = (X_{11}, X_{22}). $$
Note that $h$ is continuously differentiable and strongly convex on $\R^2$.  Moreover, we have $\nabla h(y) = By-d$ for any $y\in\R^2$, which implies that $\nabla h$ is Lipschitz continuous on $\R^2$. Thus, Problem~\eqref{eq:counter-ex} satisfies Assumption~\ref{ass:smooth-func}.

Since the adjoint operator of $\mathcal{A}$, denoted by $\mathcal{A}^{*}:\mathbb{R}^2\rightarrow\R^{2\times 2}$, is given by
$$ \mathcal{A}^{*}(y_1,y_2) = 
\begin{bmatrix}
y_1 & 0 \\
0 & y_2
\end{bmatrix},
$$
a straightforward calculation shows that for any $X\in\R^{2\times 2}$,
\begin{equation} \label{eq:gradient-f}
\nabla f(X) = \mathcal{A}^{*}\nabla h(\mathcal{A}(X)) = 
\begin{bmatrix}
\displaystyle{ \frac{3}{2}X_{11}-2X_{22} - \frac{5}{2} } & 0 \\
\noalign{\smallskip}
0 & -2X_{11} + 3X_{22} + 1
\end{bmatrix}.
\end{equation}
Now, consider the matrix
$$ \bar{X} = \begin{bmatrix}
1 & 0 \\ 0 & 0
\end{bmatrix}. $$
Using~\eqref{eq:gradient-f}, we have
$$ \nabla f(\bar{X}) = \begin{bmatrix}
-1 & 0 \\
0 & -1 
\end{bmatrix}.
$$
Moreover, using Fact~\ref{fact:subdiff-nucnorm}, it is easy to verify that
$$ \del\|\bar{X}\|_{*} = \left\{ Z\in\R^{2\times 2} \mid Z_{11} = 1, \; Z_{12} = Z_{21} = 0, \; Z_{22}\in [-1,1] \right\}. $$
Hence, we obtain $-\nabla f(\bar{X}) \in \partial\|\bar{X}\|_{*}$, which shows that $\bar{X}$ is an optimal solution to Problem~\eqref{eq:counter-ex}; \ie, $\bar{X}\in\mathcal{X}$. 

Next, we claim that $\mathcal{X}=\{\bar{X}\}$. Indeed, consider an arbitrary $\tilde{X}\in\mathcal{X}$. Since $h$ is strongly convex on $\R^2$, by Proposition~\ref{prop:opt-invariant}, we have
$$ \mathcal{A}(\tilde{X})=\mathcal{A}(\bar{X})=\begin{bmatrix} 1 \\ 0 \end{bmatrix} \quad\mbox{and}\quad \nabla f(\tilde{X})=\nabla f(\bar{X}) = \begin{bmatrix}
-1 & 0 \\
0 & -1 
\end{bmatrix}. $$
The first relation gives $\tilde{X}_{11}=1$ and $\tilde{X}_{22}=0$. On the other hand, the second relation and the optimality of $\tilde{X}$ imply
$$%\begin{equation}\label{eq:Xtilde}
-\nabla f(\tilde{X}) = \begin{bmatrix}
1 & 0 \\ 0 & 1
\end{bmatrix}
\in \partial \left\| \begin{bmatrix}
1 & \tilde{X}_{12} \\
\tilde{X}_{21} & 0 
\end{bmatrix}\right\|_{*}.
$$%\end{equation}
This, together with Proposition~\ref{prop:x-in-sub}, shows that $\tilde{X}\in\Sy_+^2$. Since $\tilde{X}_{22}=0$, we have $\tilde{X}_{12} = \tilde{X}_{21} = 0$. It follows that $\tilde{X} = \bar{X}$, as claimed. In particular, Problem~\eqref{eq:counter-ex} satisfies Assumption~\ref{ass:level-bd} as well.

Now, let $\{\delta_k\}_{k\ge0}$ be a sequence such that $\delta_k\searrow0$ and define the sequence $\{X^k\}_{k\ge0}$ by
$$ X^k = \begin{bmatrix}
1 + 2\delta_k^2 & \delta_k \\
\delta_k & \delta_k^2
\end{bmatrix} \quad\mbox{for } k=0,1,\ldots.
$$
It is clear from the construction that $X^k\limto\bar{X}$ and
\begin{equation} \label{eq:dist-Xoptimal}
d(X^k,\mathcal{X}) = \|X^k-\bar{X}\|_F = \Theta(\delta_k).
\end{equation}
On the other hand, using~\eqref{eq:gradient-f}, we have
$$ \nabla f(X^k) = \begin{bmatrix}
-1 + \delta_k^2 & 0 \\
0 & -1-\delta_k^2
\end{bmatrix}.
$$
It follows that
\begin{equation} \label{eq:R(Xk)}
R(X^k) = S_1\left( X^k - \nabla f(X^k) \right) - X^k = S_1\left(\begin{bmatrix}
2 + \delta_k^2 & \delta_k \\ \delta_k & 1 + 2\delta_k^2
\end{bmatrix}\right) - \begin{bmatrix}
1 + 2\delta_k^2 & \delta_k \\
\delta_k & \delta_k^2
\end{bmatrix},
\end{equation}
where $S_1:\R^{m\times n}\limto\R^{m\times n}$ is the so-called {\it matrix shrinkage operator} defined as follows: Given a matrix $X\in\R^{m\times n}$, $S_1(X)\in\R^{m\times n}$ is the matrix obtained by taking any SVD of $X$ and replacing the singular value $\sigma_i(X)$ by $\max\{\sigma_i(X)-1,0\}$, for $i=1,\ldots,m$; see, \eg,~\cite[Theorem 3]{MGC11}.  Since
$$ \begin{bmatrix} 2 + \delta_k^2 & \delta_k \\ \delta_k & 1 + 2\delta_k^2
\end{bmatrix} \succeq I_2 \quad\mbox{for } k=0,1,\ldots, $$
the definition of $S_1$ implies that
$$ S_1\left(\begin{bmatrix}
2 + \delta_k^2 & \delta_k \\ \delta_k & 1 + 2\delta_k^2
\end{bmatrix}\right) = \begin{bmatrix}
2 + \delta_k^2 & \delta_k \\ \delta_k & 1 + 2\delta_k^2
\end{bmatrix} - I_2 = \begin{bmatrix}
1 + \delta_k^2 & \delta_k \\ \delta_k & 2\delta_k^2
\end{bmatrix}.
$$
Upon substituting the above equation into~\eqref{eq:R(Xk)}, we obtain
$$ R(X^k) = \begin{bmatrix}
-\delta_k^2 & 0 \\
0 & \delta_k^2 
\end{bmatrix} \quad\mbox{for } k=0,1,\ldots,
$$
which shows that $\|R(X^k)\|_F=\Theta(\delta_k^2)$. This, together with~\eqref{eq:dist-Xoptimal}, leads to $\|R(X^k)\|_F = o(d(X^k,\mathcal{X}))$. Consequently, Problem~\eqref{eq:counter-ex} does not possess the error bound property~\eqref{eq:err-bd}. It is worth noting that since $\bar{X}$ is the unique optimal solution to Problem~\eqref{eq:counter-ex} and
$$ -\nabla f(\bar{X}) = I_2 \not\in \mbox{ri}(\del\|\bar{X}\|_*) = \left\{ Z\in\R^{2\times 2} \mid Z_{11} = 1, \; Z_{12} = Z_{21} = 0, \; Z_{22}\in (-1,1) \right\}, $$
the regularity condition~\eqref{eq:nucnorm-cq} fails to hold in this example.

\section{Conclusion}
In this paper, we employed tools from set-valued analysis to develop a new framework for establishing error bounds for a class of structured convex optimization problems. We showed that such a framework can be used to recover a number of existing error bound results in a unified and transparent manner. To further demonstrate the power of our framework, we applied it to a class of nuclear-norm regularized loss minimization problems and showed, for the first time, that this class of problems possesses an error bound property under a strict complementarity-type regularity condition. We then complemented this result by constructing an example to show that the said error bound could fail to hold without the regularity condition.  Consequently, we obtained a rather complete answer to a question raised by Tseng~\cite{tseng2010approximation}. A natural and interesting future direction is to apply our framework to study the error bound property associated with other families of instances of Problem~\eqref{eq:str-cvx-prob} in which $P$ is non-polyhedral; see, \eg,~\cite{ZZS15}.

\section*{Acknowledgements}
We would like to express our gratitude to Professor Defeng Sun for his insightful comments on this work and for his constant encouragement.  We would also like to thank Professor Tom Luo for fruitful discussions and Professor Shaohua Pan for sending us the unpublished manuscript~\cite{PL15}.  This work is supported in part by the Hong Kong Research Grants Council (RGC) General Research Fund (GRF) Project CUHK 14206814 and in part by a gift grant from Microsoft Research Asia.

\bibliographystyle{abbrv}
\bibliography{eb-dft-v1}

\end{document}